\numberwithin{equation}{section}
\DeclareMathSymbol{\shortminus}{\mathbin}{AMSa}{"39}
\newcommand{\mycomment}[1]{}
\DeclareMathOperator{\Span}{span}
\DeclareMathOperator{\Lc}{LC}
\DeclareMathOperator{\cvx}{CVX}
\DeclareMathOperator{\loc}{loc}
\DeclareMathOperator{\Div}{div}
\DeclareMathOperator{\Per}{Per}
\newcommand{\ko}{\mathcal K_o^n}
\newcommand{\lh}{\left}
\newcommand{\rh}{\right}
\newcommand{\rn}{{\mathbb R^n}}
\newcommand{\sn}{ {S^{n-1}}}
\newcommand{\ac}{\widetilde{C}^{e}_{q}}
\renewcommand{\L}{{\mathcal L}}
\newtheorem{lemma}{Lemma}[section]
\newtheorem{theorem}[lemma]{Theorem}
\newtheorem{definition}[lemma]{Definition}
\newtheorem{coro}[lemma]{Corollary}
\newtheorem{prop}[lemma]{Proposition}
\newtheorem{remark}[lemma]{Remark}
\title{Dual curvature measures for log-concave functions}
\author[Y. Huang]{Yong Huang}
\address{Institute of Mathematics, Hunan University, 2 Lushan S Road, 410082, Changsha, China}
\email{huangyong@hnu.edu.cn}
\author[J. Liu]{Jiaqian Liu}
\address{School of Mathematics and Statistics, Henan University, Kaifeng, 475001, China; Institute of Mathematics, Hunan University, 2 Lushan S Road, 410082, Changsha, China}
\email{liujiaqian@hnu.edu.cn}
\author[D. Xi]{Dongmeng Xi}
\address{Department of Mathematics, Shanghai University, 266 Jufengyuan Rd, 200444, Shanghai, China}
\email{dongmeng.xi@live.com}
\author[Y. Zhao]{Yiming Zhao}
\address{Department of Mathematics,  Syracuse University, 130 Sims Dr, 215 Carnegie, Syracuse, NY, 13244, USA}
\email{yzhao197@syr.edu}
\date{\today}
\keywords{}
\subjclass[]{}
\thanks{Research of Huang is supported by NSFC 12171144 and 12231006. Research of Xi is supported by National Natural Science Foundation of China (12071277), and STSCM program (20JC1412600). Research of Zhao is supported, in part, by U.S. National Science Foundation Grant DMS-2132330.}
\begin{document}
\begin{abstract}
We introduce dual curvature measures for log-concave functions, which in the case of characteristic functions recover the dual curvature measures for convex bodies introduced by  Huang-Lutwak-Yang-Zhang in 2016. Variational formulas are shown. The associated Minkowski problem for these dual curvature measures is considered and sufficient conditions in the symmetric setting are demonstrated.
\end{abstract}

\maketitle

\section{Introduction}
Geometric measures associated with convex bodies have been a core part of convex geometric analysis in the past few decades. In the classical Brunn-Minkowski theory of convex bodies, \emph{quermassintegrals} (such as volume, surface area, mean width, and much more in higher dimensions) are the central geometric invariants and are used to describe the shape of convex bodies via isoperimetric (or reverse isoperimetric) inequalities. \emph{Area measures} introduced by Aleksandrov, Fenchel-Jessen, and \emph{curvature measures} introduced by Federer can be viewed as the ``derivative'' of quermassintegrals when viewed as functionals on the set of convex bodies. Invariably, these geometric measures carry some curvature terms which make it possible for them to encode shape information of convex bodies. At the same time, unlike curvatures (in the sense of differential geometry), these geometric measures are defined even with minimal regularity assumptions. The study of these geometric measures is often intertwined with PDE (Monge-Amp\`{e}re equations in particular), Gauss curvature flows, and inevitably isoperimetric inequalities. (After all, half of calculus is focused on using derivatives to study properties of the original function.)

A major alternative to the classical Brunn-Minkowski theory in modern convex geometry is the dual Brunn-Minkowski theory. The dual Brunn-Minkowski theory, introduced by Lutwak in 1975, is a theory that is in a sense dual to the classical Brunn-Minkowski theory. A good discussion of the dual theory can be found in Section 9.3 of Schneider's classical volume \cite{MR3155183}. Quoting from Gardner-Hug-Weil \cite{MR3120744}:``The dual Brunn-Minkowski theory can count among its successes the solution of the Busemann-Petty problem in \cite{MR1298719}, \cite{MR1689343}, \cite{MR963487}, and \cite{MR1689339}. It also has connections and applications to integral geometry, Minkowski geometry, the local theory of Banach spaces, and stereology; see \cite{MR2251886} and the references given there.''

In the seminal work \cite{MR3573332}, Huang-Lutwak-Yang-Zhang (Huang-LYZ), for the first time, revealed the fundamental geometric measures, duals of Federer's curvature measures, called \emph{dual curvature measures}, in the dual Brunn-Minkowski theory. These measures were obtained through ``differentiating'' \emph{dual quermassintegrals} which are central in the dual theory. They have led to many natural open problems and quickly attracted much attention. Details on this will be provided below.

It is well-known that the set of convex bodies can be embedded in the set of upper semi-continuous log-concave functions via their characteristic functions. This work aims to introduce the functional version of dual curvature measures through the machinery of the theory of functions of bounded anisotropic weighted variation and to study their characterization problem (generally known as \emph{Minkowski problems}). It is worth pointing out that, by picking proper weight functions, functional versions of many other geometric measures can be introduced in this fashion. However, this will not be explored in this work.

In the past few decades (even more so in the last decade), interest in log-concave functions has grown considerably, much of it motivated by their counterparts in the theory of convex bodies. Perhaps the first such breakthrough and by now a well-known result is the Pr\'{e}kopa-Leindler inequality: For any nonnegative integrable functions $f, g$ on $\rn$ and their sup-convolution $(1-\lambda)\cdot f\oplus \lambda\cdot g$ given by
\begin{equation}
	  \lh( (1-\lambda)\cdot f\oplus \lambda\cdot g \rh)  (z)= \sup_{(1-\lambda)x+\lambda y=z} f(x)^{1-\lambda}g(y)^\lambda,
\end{equation}
where $0<\lambda<1$, one has the following inequality regarding their $\L^1$ norms,
\begin{equation}
\label{eq 9.1.1}
	\int_\rn \lh( (1-\lambda)\cdot f\oplus \lambda\cdot g \rh) (z)dz\geq \left(\int_{\rn} f(x)dx\right)^{1-\lambda}\left(\int_\rn g(y)dy\right)^\lambda.
\end{equation}
The Pr\'{e}kopa-Leinder inequality is the functional (and equivalent) version of the celebrated Brunn-Minkowski inequality,
\begin{equation}
	V((1-\lambda)X+\lambda Y)\geq V(X)^{1-\lambda}V(Y)^\lambda,
\end{equation}
which holds for any bounded measurable sets $X, Y\subset \rn$ such that $(1-\lambda)X+\lambda Y$ is measurable.
 See the survey \cite{MR1898210} by Gardner. It is important to note that convexity is required neither in the Brunn-Minkowski inequality nor in the Pr\'{e}kopa-Leindler inequality, although it does manifest itself in the equality conditions. Notice here that the Minkowski combination $(1-\lambda)X+\lambda Y$ corresponds to the sup-convolution between functions (see, \eqref{eq 9.1.2} for a complete definition) and the volume of a measurable set corresponds to the integral of a non-negative function. That this is natural can be seen by replacing $f$ and $g$ by characteristic functions of subsets of $\rn$.

 In the same spirit, many other geometric invariants and operations have found their counterparts for log-concave functions (or, equivalently, convex functions). We provide a quick overview of some of these remarkable results. In the seminal work \cite{MR2480615}, Artstein-Avidan and Milman demonstrated how the Legendre-Fenchel transform can be viewed as the functional version of taking the polar body of a convex body. Compare this to another remarkable paper \cite{MR2438994} by B\"{o}r\"{o}czky-Schneider. Prior to this, the functional version of the Blaschke-Santal\'o inequality was discovered by Ball in his Ph.D. thesis and by Artstein-Avidan, Klartag, and Milman in \cite{MR2220210}. Steiner formula and quermassintegrals for quasi-concave functions were studied by Bobkov-Colesanti-Fragal\`{a} \cite{MR3147446}. Extensions of affine surface area and affine isoperimetric inequalities can be found in \cite{MR2899992,MR3335279,MR3551664,MR3695895}. Much more recently, Colesanti, Ludwig, and Mussnig embarked on a journey to characterize valuations on the set of convex functions \cite{MR4453228,https://doi.org/10.48550/arxiv.2109.09434,https://doi.org/10.48550/arxiv.2009.03702,MR4124129,https://doi.org/10.48550/arxiv.2201.11565} (compare them to Hadwiger-type theorems on convex bodies \cite{MR2680490,MR2772547,MR1991649,MR2966660,MR3194492,MR2668553}).

In the dual Brunn-Minkowski theory, the central geometric invariants are known as \emph{dual quermassintegrals}. Let $q=1, \cdots, n$, and $K$ be a convex body that contains the origin in its interior. As Lutwak \cite{MR963487} showed, up to a constant multiple, the $q$-th dual quermassintegral $\widetilde{V}_q(K)$ can be defined as the average of lower-dimensional sectional areas of $K$ with $q$-dimensional subspaces:
\begin{equation}
	\widetilde{V}_q(K)=c\int_{G(n,q)}\mathcal{H}^{q}(K\cap \xi)d\xi,
\end{equation}
where $G(n,q)$ is the Grassmannian manifold containing all $q$-dimensional subspaces of $\rn$ and the integration is with respect to the Haar measure. Dual quermassintegrals have integral representations (see \eqref{eq 91}) which warrant the immediate extension to $q\in \mathbb{R}$. Note that with the exception of the special case $q=n$, when the dual quermassintegral is simply volume, the $q$-th dual quermassintegral is generally \emph{not} invariant under translations of $K$.

A major question answered in the landmark work \cite{MR3573332} (and subsequently \cite{MR3783409}) by Huang-LYZ was the differentiability of $\widetilde{V}_q$. In particular, it was shown that if $K$ is a convex body in $\rn$ such that the origin is an interior point and $L$ is a compact convex subset of $\rn$, then
\begin{equation}
\label{eq 92}
	\lim_{t\rightarrow 0^+}\frac{\widetilde{V}_q(K+ t\cdot L)-\widetilde{V}_q(K)}{t}= \int_{\sn} h_L(v) \frac{1}{h_K(v)}d\widetilde{C}_q(K,v).
\end{equation}
Here the geometric measure $\widetilde{C}_q(K,\cdot)$ is known as the $q$-th \emph{dual curvature measure} of $K$. In fact, there is naturally an $L_p$ version of \eqref{eq 92} that leads to the $(p,q)$-dual curvature measure introduced in \cite{MR3783409} and the measure $\frac{1}{h_K(v)}d\widetilde{C}_q(K,v)$ is nothing but the $(1,q)$-dual curvature measure. The ``1'' here stands for the fact that the sum $K+t\cdot L$ is the classical Minkowski sum, or, the $l^1$ sum of support functions of $K$ and $L$.

Let $q>0$. The $(q-n)$-th moment of a density function $f$ is defined as
\begin{equation}
	\widetilde{V}_q(f) = \int_{\rn }|x|^{q-n}f(x)dx,
\end{equation}
if it exists. The moment $\widetilde{V}_q$ is a natural extension of dual quermassintegrals to the set of log-concave functions (which in turn justifies this notation). Indeed, if $f=1_K$, where $1_K$ is the characteristic function of  some convex body $K$ that contains the origin in its interior, then, by integration via polar coordinates, one immediately has
\begin{equation}
	\widetilde{V}_q(1_K) = \widetilde{V}_q(K).
\end{equation}
Motivated by the work \cite{MR3573332} and the correspondence between the Minkowski combination and sup-convolution, it is natural to ask whether
\begin{equation}
\label{eq 93}
	\lim_{t\rightarrow 0} \frac{\widetilde{V}_q(f\oplus t\cdot g)-\widetilde{V}_q(f)}{t}
\end{equation}
exists for log-concave functions $f,g$, and if it does, what the limit is. We remark that with the exception of $q=n$, for generic $q>0$, the moments considered in \eqref{eq 93} are \emph{not} invariant under the transformation $f(x)\rightarrow f(x+x_0)$. Therefore, the relative position of the origin is crucial in the study of \eqref{eq 93}. In fact, since \eqref{eq 92} only holds when $K$ contains the origin in its interior, some condition on $f$ that mimics this constraint is expected.

When $q=n$, the functional $\widetilde{V}_q$ is nothing but the $\L^1$ norm of a log-concave function. In this case, the limit \eqref{eq 93} was studied by Colesanti-Fragal\`{a} \cite{MR3077887} under various regularity assumptions on $f$ and $g$. In particular, they discovered that the limit \eqref{eq 93} consists of two parts---one concerning the behavior of $f$ inside its support, the other concerning the values of $f$ on the boundary of its support as well as the shape of the support set. Around the same time, Cordero-Erausquin and Klartag \cite{MR3341966} studied the limit with the additional assumption that $f$ is \emph{essentially continuous} and explored the connection with complex analysis \cite{MR3137248} and optimal transport.
Recently, Rotem \cite{https://doi.org/10.48550/arxiv.2206.13146} showed that the result of Colesanti-Fragal\`{a} remains valid without any of the various additional regularity requirements, by employing tools from the theory of functions of bounded anisotropic variation. The first main result of this paper is to show that by considering functions of bounded anisotropic \emph{weighted} variation, one can compute the limit in \eqref{eq 93} for any $q>0$. It is important to emphasize that unlike the special case $q=n$, for generic $q>0$, the $q$-th moment of a function is not translation-invariant. In particular, our approach is motivated by the dual approach (differentiating radial functions) to the variational formula developed in \cite{MR3573332}.

It is also worth pointing out that the extension of functions of bounded variation in $\rn$ with respect to the Lebesgue measure to functions of bounded variation in $\rn$ with respect to an arbitrary measure (even those absolutely continuous with respect to the Lebesgue measure) is not entirely trivial. This has been previously done in, for example, \cite{MR1864805} and \cite{MR2005202} via different approaches (which led to non-equivalent definitions of weighted total variation).

Let $\Lc(\rn)$ be the set of all upper semi-continuous log-concave functions $f:\rn \rightarrow [0,\infty)$. The limit \eqref{eq 93} leads to two Borel measures---one on $\rn$ and one on $\sn$.

 \begin{definition}
 \label{def intro 1}
	Let $f=e^{-\phi}\in \Lc(\rn)$ with nonzero finite $\L^1$ norm. The Euclidean $q$-th dual curvature measure of $f$,  denoted by $\widetilde{C}^{e}_{q}(f;\cdot)$, is a Borel measure on $\rn$ given by
	\begin{equation}
	\label{eq 8.8.1}
		\widetilde{C}^{e}_{q}(f;\mathcal{B}) = \int_{\nabla\phi(x)\in \mathcal{B}}|x|^{q-n}f(x)\,dx,
	\end{equation}
	for each Borel set $\mathcal{B}\subset \rn$.\end{definition}

In \eqref{eq 8.8.1}, note that since $\phi$ is convex, its gradient $\nabla \phi$ exists almost everywhere in the interior of  its domain $\{x\in \rn: \phi(x)<\infty\}$. Note that by definition, $f>0$ if and only if $\phi<\infty$. Therefore, the integral in \eqref{eq 8.8.1} is well-defined.

\begin{definition}
\label{def intro 2}
	Let $f=e^{-\phi}\in \Lc(\rn)$ with nonzero finite $\L^1$ norm. The spherical $q$-th dual curvature measure of $f$, denoted by $\widetilde{C}^{s}_{q}(f;\cdot)$, is a Borel measure on $\sn$ given by
	\begin{equation}
		\widetilde{C}^{s}_{q}(f;\eta) = \int_{\nu_{K_f}(x)\in \eta}|x|^{q-n}f(x)\,d\mathcal{H}^{n-1}(x),
	\end{equation}
	for each Borel set $\eta\subset \sn$, where $K_f$ is the support of $f$ and $\nu_{K_f}$ is its Gauss map defined almost everywhere on $\partial K_f$ with respect to $d\mathcal{H}^{n-1}(x)$.
\end{definition}

These two measures generated through differentiating the $q$-th moment of a log-concave function $f$ with respect to sup-convolution are associated with the absolutely continuous and singular part of the distributional derivative of $f$, respectively. It is worth noting that in the case of the characteristic function of a convex body, the measure $\widetilde{C}^{s}_{q}(f;\cdot)$ recovers the $(1,q)$-dual curvature measure for convex bodies appearing in \eqref{eq 92}.

The first of our main theorems shows that the limit in \eqref{eq 93} does exist under minor assumptions on $f$ and $g$ near the origin.
\begin{theorem}
\label{theorem intro 1}
 		Let $f=e^{-\phi}\in \Lc(\rn)$ with non-zero finite $\L^1$ norm and $q>0$. Assume $f$ achieves its maximum at the origin and
 		\begin{equation}
 		\label{eq 400a}
 			\limsup_{x\rightarrow o} \frac{|f(x)-f(o)|}{|x|^{\alpha+1}}<\infty,
 		\end{equation}
 		for some $0<\alpha<1$.
 		
 		 Let $g=e^{-\psi}\in \Lc(\rn)$ be compactly supported with $g(o)>0$. Then,
 		\begin{equation}
 		\label{eq 95}
	\lim_{t\rightarrow 0^+} \frac{\widetilde{V}_q(f\oplus t\cdot g)-\widetilde{V}_q(f)}{t} = \int_{\rn}\psi^*(y)d\ac(f;y)+\int_{\sn} h_{K_g}(v)d\widetilde{C}^{s}_{q}(f;v).
 		\end{equation}
 		where $h_{K_g}$ is the support function of the support set $K_g$ of $g$, and $\psi^*$ is the Legendre-Fenchel conjugate of $\psi$.
 \end{theorem}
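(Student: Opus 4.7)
My plan is to follow the dual approach of Huang-Lutwak-Yang-Zhang and differentiate a level-set representation of $\wt V_q$ rather than the function itself. Write $f=e^{-\phi}$ and $g=e^{-\psi}$, so that $f\oplus t\cdot g=e^{-\phi_t}$ with the infimal convolution
\[\phi_t(z)=\inf_{y\in\rn}\bigl[\phi(z-ty)+t\psi(y)\bigr].\]
At every point of differentiability of $\phi$, expanding $\phi(z-ty)$ to first order in $t$ and taking the infimum in $y$ against the Legendre-Fenchel conjugate yields
\[\lim_{t\to 0^+}\frac{\phi_t(x)-\phi(x)}{t}=-\psi^*(\nabla\phi(x)),\]
so $\tfrac1t(f_t(x)-f(x))\to f(x)\,\psi^*(\nabla\phi(x))$ for a.e.\ $x\in\interior K_f$ by Aleksandrov's theorem for convex functions. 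Integrating this pointwise limit against the weight $|x|^{q-n}$, and invoking the change of variables $y=\nabla\phi(x)$ together with Definition~\ref{def intro 1}, reproduces exactly the absolutely continuous term $\int_\rn\psi^*(y)\,d\ac(f;y)$ of \eqref{eq 95}.

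The second term comes from the fact that $\supt f_t=K_f+tK_g$, so the support grows. Normalizing $g$ so that $\max g=1$ (equivalently $\min\psi=0$), the definition of $f_t$ and the upper semi-continuity of $f,g$ together with compactness of $K_g$ give the exact level-set identity
\[\{f_t\geq s\}=\bigcup_{y\in K_g}\bigl(\{f\geq s\,e^{t\psi(y)}\}+ty\bigr),\]
in which the Minkowski translation $+ty$ and the threshold shift $s\mapsto s\,e^{t\psi(y)}$ are the two sources of first-order perturbation of $\rho_{\{f_t\geq s\}}(u)$ at $t=0$. Passing through the coarea representation $\wt V_q(f)=\int_0^{f(o)}\wt V_q(\{f\geq s\})\,ds$ and applying the HLYZ variational formula \eqref{eq 92} on each convex super-level set $\{f\geq s\}$ (which contains the origin in its interior because $f$ attains its maximum at $o$), the translation perturbation integrates, via Federer's area formula for the Gauss map $\nu_{K_f}$ and Definition~\ref{def intro 2}, to
\[\int_\sn h_{K_g}(v)\,d\wt C^{\,s}_q(f;v),\]
while the threshold perturbation, after infimization in $y$, reassembles back into the $\psi^*$ contribution already identified by the pointwise analysis.

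The main technical obstacle I anticipate is the dominated-convergence step near the apex $s\uparrow f(o)$, where $\{f\geq s\}$ collapses to the origin and the weight $|x|^{q-n}$ blows up for $q<n$. This is precisely where hypothesis \eqref{eq 400a} enters: it forces $\{f\geq s\}$ to contain a Euclidean ball of radius comparable to $(\log(f(o)/s))^{1/(\alpha+1)}$ near the apex, which is just enough to give a $t$-uniform $L^1$ bound on the difference quotient $|x|^{q-n}(f_t(x)-f(x))/t$ over a shrinking neighborhood of the origin. Compactness of $K_g$ together with local Lipschitz regularity of $\phi$ on $\interior K_f$ controls the intermediate levels, while the BV/anisotropic-weighted-variation framework handles the thin-shell analysis in the $O(t)$-tube around $\partial K_f$ without any additional boundary regularity, along the lines developed in the $q=n$ case by Rotem \cite{https://doi.org/10.48550/arxiv.2206.13146}. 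Outside of these two singular regimes, the argument reduces to standard Legendre-Fenchel duality and the coarea formula on convex bodies.
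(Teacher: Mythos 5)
Your outline assembles the right ingredients (the pointwise limit $\frac{f_t-f}{t}\to\psi^*(\nabla\phi)f$, the layer-cake structure, the HLYZ formula on super-level sets, the role of \eqref{eq 400a} near the apex, the BV framework for the boundary term), but the steps that actually constitute the proof are missing, and one of your claimed mechanisms is wrong. First, you produce the absolutely continuous term by ``integrating the pointwise limit,'' but the limit of $\frac1t\int_\rn(f_t-f)|x|^{q-n}dx$ is \emph{not} the integral of the pointwise limit: the singular term $\int_\sn h_{K_g}\,d\widetilde{C}^{s}_{q}(f;\cdot)$ is exactly the mass of the difference quotient that escapes to $\partial K_f$, so an actual comparison argument (domination/Fatou against something whose limit is known) is indispensable and is not supplied. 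Second, your level-set identity $\{f_t\ge s\}=\bigcup_{y\in K_g}\bigl(\{f\ge s\,e^{t\psi(y)}\}+ty\bigr)$ is correct, but Lemma \ref{lemma 122} (the formula \eqref{eq 92}) applies to Minkowski sums $K+tL$ with a \emph{fixed} convex body $L$; the union of translates of \emph{different} level sets is not such a sum unless $g$ is a multiple of a characteristic function, so asserting that its first-order effect splits into the two displayed terms is restating the theorem, not proving it. Moreover, the proposed bookkeeping ``translation $\mapsto$ spherical term, threshold shift $\mapsto$ $\psi^*$ term'' is incorrect: for $g=1_L$ one has $\psi\equiv 0$ on $K_g=L$, so there is no threshold perturbation at all, yet the limit still contains the full Euclidean term $\int_\rn h_L\,d\ac(f;\cdot)=\int_\rn\psi^*\,d\ac(f;\cdot)$. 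In the paper the split into the two measures does not come from translation versus threshold; it comes from the decomposition \eqref{eq 32} of the distributional derivative $[[Df]]$ into its absolutely continuous part $\nabla f\,dx$ and its singular part on $\partial K_f$, applied after the weighted coarea formula \eqref{eq 26}.

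For the record, the paper's route is: (i) for $g=c1_L$, write $\delta_q(f,1_L)$ by layer cake as $\lim_{t\to0^+}\int_0^M\frac{\widetilde V_q(K_s+tL)-\widetilde V_q(K_s)}{t}\,ds$ with $K_s=[f\ge s]$, and justify exchanging limit and integral by dominated convergence -- hypothesis \eqref{eq 400a} gives $K_s\supset c_0(M-s)^{1/(\alpha+1)}B$ near $s=M$ and hence the integrable majorant $C(M-s)^{-1/(1+\alpha)}$ (Theorem \ref{theorem delta layer cake}); (ii) identify $\int_0^\infty\widetilde V_{1,q}([f\ge s],L)\,ds$ with $TV_{L,\omega_q}(f;\rn)$ via \eqref{eq 26} and split it via \eqref{eq 32} into $\int h_L\,d\ac+\int h_L\,d\widetilde C^s_q$ (Theorem \ref{theorem 11}); (iii) for general compactly supported $g$, sandwich $\frac1j 1_{Q_j}\le g\le A1_{K_g}$ with $Q_j=\{g\ge 1/j\}\uparrow K_g$, and combine the characteristic-function case with the pointwise limit of Lemma \ref{lemma pointwise limit}, Fatou's lemma, and monotone convergence to pin down both $\limsup$ and $\liminf$ (Theorem \ref{theorem 31}). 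If you wish to keep a direct level-set computation for general $g$, you would have to prove from scratch a first-order expansion of $\widetilde V_q$ of the union $\bigcup_{y\in K_g}(\{f\ge s e^{t\psi(y)}\}+ty)$; the sandwich argument is precisely how the paper avoids that.
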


 Note that hypothesis \eqref{eq 400a} in the above theorem is not the best hypothesis, see Proposition \ref{prop 8221} and Remark \ref{remark 8101} for more details.  We emphasize again that it is expected that we need some condition on $f$ that mimics the idea that ``$f$ contains the origin in its interior''. The assumption that $f$ achieves its maximum at the origin, together with hypothesis \eqref{eq 400a}, ensures that almost all of $f$'s nonempty level sets contain the origin in the interior and these level sets contain the origin in their interiors in some \emph{uniform} way. We remark that if $f$ is $C^{1,\alpha}$ in a neighborhood of the origin, then \eqref{eq 400a} is satisfied. To better explain the condition $g(o)>0$, we focus for the moment on the special case that $g$ is the characteristic function of some convex body $L$. In this case, without the condition $g(o)>0$, the convex body $L$ might be far away from the origin. As a consequence, the origin might be outside the Minkowski combination of the level sets of $f$ and $L$. This makes it very challenging to apply geometric results to level sets of $f$. See Theorem \ref{theorem delta layer cake} for details.

 When $f$ and $g$ are characteristic functions of convex bodies that contain the origin in their interiors, the first integral on the right-hand side in  \eqref{eq 95} vanishes whereas the second term becomes the right-hand side of \eqref{eq 92}.

\emph{Minkowski problems} in convex geometric analysis are characterization problems of geometric measures associated with convex bodies. These geometric measures are often ``derivatives'' of important geometric invariants. In differential geometry, Minkowski problems are known as various prescribed curvature problems. This line of research that asks when a given measure $\mu$ can be realized as a certain geometric measure of a to-be-solved convex body (without any unnecessary regularity assumptions) goes back to the classical Minkowski problem that inspired the study of nonlinear elliptic PDE through the last century; see, for example, Minkowski \cite{MR1511220}, Aleksandrov \cite{MR0001597}, Cheng-Yau \cite{MR0423267}, Pogorelov\cite{MR0478079}, and the works of Caffarelli on the regularity theory of Monge-Amp\`{e}re equations \cite{MR1005611, MR1038359,MR1038360}, among many other influential works. In many ways, the study of Minkowski problems goes hand-in-hand with the study of sharp isoperimetric inequalities; see \cite{MR1849187}.

In the last 2-3 decades, there are two major families of Minkowski problems. One is the $L_p$ Minkowski problem that belongs to the $L_p$ Brunn-Minkowski theory whose success can be credited to the landmark work by Lutwak \cite{MR1231704, MR1378681} where the fundamental $L_p$ surface area measure was discovered. The $L_p$ Minkowski problem includes the logarithmic Minkowski problem and the centro-affine Minkowski problem and has been studied through a variety of methods; see, for example, Hug-Lutwak-Yang-Zhang (Hug-LYZ) \cite{MR2132298}, Chou-Wang \cite{MR2254308}, B\"{o}r\"{o}czky-LYZ \cite{MR3037788}, and most recently Guang-Li-Wang \cite{https://doi.org/10.48550/arxiv.2203.05099}. A vast library of works on this topic can be found by looking for those citing the above-mentioned works. It is worth pointing out that there is much unknown regarding the $L_p$ Brunn-Minkowski theory, especially for $p<1$. In fact, the log Brunn-Minkowski conjecture, arguably the most beautiful and powerful (yet plausible) conjecture in convex geometry in the last decade, is the isoperimetric inequality associated with the log Minkowski problem. See, for example, \cite{MR2964630, MR4438690, MR4088419, milman2021centroaffine, vanhandel2022local}.

The other major family of Minkowski problems are the \emph{dual Minkowski problems} following the landmark work \cite{MR3573332}. In a short period since \cite{MR3573332}, there have been many influential works on this topic which have already led to many interesting conjectures regarding isoperimetric inequalities as well as the discovery of novel curvature flows; see, for example, B\"{o}r\"{o}czky-Henk-Pollehn \cite{MR3825606}, Chen-Chen-Li \cite{MR4259871}, Chen-Huang-Zhao \cite{MR3953117}, Chen-Li \cite{MR3818073}, Gardner-Hug-Weil-Xing-Ye \cite{MR3882970}, Henk-Pollehn \cite{MR3725875}, Li-Sheng-Wang \cite{MR4055992}, Liu-Lu \cite{MR4127893}, Zhao \cite{MR3880233}. It is important to note that the list is by no means exhaustive.

In this paper, we study the Minkowski problem for $\ac$.

\textbf{The functional dual Minkowski problem.} Let $q>0$ and $\mu$ be a non-zero finite Borel measure on $\rn$. Find the necessary and sufficient conditions on $\mu$ so that there exists $f\in \Lc(\rn)$ with nonzero finite $\L^1$ norm such that
\begin{equation}
\label{eq 96}
	\mu = \ac(f;\cdot).
\end{equation}

Under sufficient regularity assumption, that is, the measure $\mu$ has a $C^\infty$ density (say, $g\geq 0$) and $f\in C^{\infty}$, equation \eqref{eq 96} is equivalent to the following Monge-Amp\`{e}re type equation in $\rn$
\begin{equation}
\label{eq 99}
	g(\nabla \phi(x))\det (\nabla^2 \phi(x)) = |x|^{q-n}e^{-\phi(x)},
\end{equation}
where $f=e^{-\phi}$.

It is important to note that the measure $\ac(f;\cdot)$ might not be absolutely continuous. Thus, the Minkowski problem \eqref{eq 96} does not always reduce to \eqref{eq 99} in the general setting.

 When $q=n$, the functional dual Minkowski problem becomes the Minkowski problem for moment measures. See Cordero-Erausquin and Klartag \cite{MR3341966} where it is completely solved within the class of essentially continuous functions. The highly nontrivial $L_p$ extension of Cordero-Erausquin and Klartag's result can be found in the recent papers by Fang-Xing-Ye \cite{MR4375791} for $p>1$ and Rotem \cite{https://doi.org/10.48550/arxiv.2006.16933} for $0<p<1$.

As pointed out earlier, a key difference between the case $q=n$ and $q\neq n$ is that in the latter case, \eqref{eq 96} is not invariant under translations of $f$ (with respect to its domain). We point out that translation-invariance played a central role in \cite{MR3341966}.

In the current work, we provide a sufficient condition for the existence of solutions to  \eqref{eq 96} in the origin-symmetric case.
\begin{theorem}
\label{theorem intro 2}
	Let $q>0$ and $\mu$ be a non-zero even finite Borel measure on $\rn$. Suppose $\mu$ is not concentrated in any proper subspaces and $\int_\rn |x|d\mu(x)<\infty$. There exists an even $f_0\in \Lc(\rn)$ with nonzero finite $\L^1$ norm such that
	\begin{equation}
		\mu = \ac(f_0;\cdot).
	\end{equation}
\end{theorem}

The functional dual Minkowski problem \eqref{eq 96} is heavily intertwined with its counterpart in the setting of convex bodies. In particular, estimates regarding dual quermassintegrals are critically needed. As part of the process to get the required estimates, we require a Blaschke-Santal\'{o} type inequality for the functional $\widetilde{V}_q$. See Lemma \ref{lemma 4}. It is of great interest to see if there is a sharp (more refined) version.

We remark  that the assumption
\begin{equation}
	\int_\rn |x|d\mu(x)<\infty
\end{equation}
is necessary here. See Theorem \ref{thm necessity}.

The rest of this paper is organized in the following way. In Section 2, we recall some notations and basics. In Section 3, we gather some basics in the theory of functions of bounded variation. Section 4 is devoted to proving Theorem \ref{theorem intro 1} whereas Section 5 is devoted to proving Theorem \ref{theorem intro 2}.

\textbf{Acknowledgement.} We are in great debt to the referees for their \emph{extremely} valuable comments and suggestions.

\section{Preliminaries}

This section is divided into two parts. The first part contains some notations and basics in the theory of convex bodies, whereas the second part deals with those for convex functions as well as log-concave functions.

For convenience, throughout the current work, if the exact value of a constant $C>0$ does not matter, then we may use the same $C$ for different positive constants (that may differ from line to line).

\subsection{Convex bodies}
\label{section preliminary convex body}
The standard reference is the comprehensive book \cite{MR3155183} by Schneider.

A convex body in $\rn$ is a compact convex set with a nonempty interior. The boundary of $K$ is written as $\partial K$. We use $\mathcal{K}^n$ for the set of all convex bodies in $\rn$. The subclass of convex bodies that contain the origin in their interiors in $\rn$ is denoted by $\ko$.

We will use $B(x,r)$ to denote the ball in $\rn$ centered at $x$ with radius $r$. Occasionally, we write $B(r)=B(o,r)$ and $B=B(o,1)$ for simplicity.

The support function $h_K$ of $K$ is defined by
\begin{equation}
\label{eq local 0004}
	h_K(y) = \max\{\langle x, y\rangle  : x\in K\}, \quad y\in\rn.
\end{equation}
The support function $h_K$ is a continuous function homogeneous of degree 1. Suppose $K$ contains the origin in its interior. The radial function $\rho_K$ is defined by
\[ \rho_K(x) = \max\{\lambda : \lambda x \in K\}, \quad x\in \rn\setminus \{0\}. \]
The radial function $\rho_K$ is a continuous function homogeneous of degree $-1$. It is not hard to see that $\rho_K(u)u \in
\partial K$ for all $u\in S^{n-1}$ and the reciprocal radial function is a (potentially asymmetric) norm. To be more specific, we write
\begin{equation}
\label{eq 210}
	\|x\|_K = \frac{1}{\rho_K(x)} = h_{K^*}(x), \quad \text{for each } x\in \rn.
\end{equation}
Here, the convex body $K^*$ is known as the polar body of $K$ and is defined by
\begin{equation}
	K^*= \{y\in \rn:\langle x, y\rangle \leq 1, \forall x\in K\}.
\end{equation}
By the definition of the polar body, it is simple to see that the Banach spaces $(\rn, \|\cdot\|_{K})$ and $(\rn, \|\cdot\|_{K^*})$ are dual to each other and we have the following generalized Cauchy-Schwarz inequality
\begin{equation}
	\langle x, y\rangle \leq \|x\|_K\|y\|_{K^*}.
\end{equation}

 Let $h:\sn\rightarrow (0,\infty)$ be continuous, the \emph{Wulff shape} $[h]\in \mathcal{K}_o^n$ is given by 
\begin{equation}
	[h]=\{x\in \rn: \langle x, v\rangle\leq h(v) \text{ for all } v\in \sn\}.
\end{equation}
It is simple to see that if $K\in \mathcal{K}_o^n$, then $[h_K]= K$. Also immediate from the definition of $[h]$ is that for every $u\in \sn$, we have 
\begin{equation}
	\label{eq 884}
	\rho_{[h]}(u)\langle u, v\rangle\leq h(v), \qquad\forall v\in \sn,
\end{equation}
and there exists $v_*\in \sn$ such that
\begin{equation}
	\label{eq 885}
	\rho_{[h]}(u)\langle u, v_*\rangle= h(v_*).
\end{equation}

For each $x\in \partial K$, we will write $\nu_K(x)$ for the outer unit normal of $K$ at $x$. Note that by convexity, the map $\nu_K$ is defined almost everywhere on $\partial K$ with respect to $\mathcal{H}^{n-1}$. For each $v\in \sn$, define 
\begin{equation}
	\nu_K^{-1}(v)=\{x\in \partial K: \langle x, v\rangle=h_K(v)\}. 
\end{equation}
Since $K$ is a convex body, for almost all $v\in \sn$, the set $\nu_K^{-1}(v)$ contains only one boundary point of $K$. With slight abuse of notation, we will use $\nu_K^{-1}$ to denote a map that is defined almost everywhere on $\sn$ and $\nu_K^{-1}$ maps $v$ to the unique point in $\nu_K^{-1}(v)$.

The fundamental geometric functionals in the dual Brunn-Minkowski theory are dual quermassintegrals. For $q\neq 0$, the $q$-th dual quermassintegral of $K$, denoted by $\widetilde{V}_q(K)$, is defined as
\begin{equation}
\label{eq 91}
	\widetilde{V}_q(K) = \frac{1}{q}\int_\sn \rho_K^q(u)du.
\end{equation}
When $q=1,\cdots, n$, dual quermassintegrals have the strongest geometric significance. They are proportional to the mean of the $q$-dimensional volume of intersections of $K$ with $q$-dimensional subspaces in $\rn$.

In \cite{MR3573332}, it was established that variation of the dual quermassintegral with respect to the logarithmic Minkowski sum leads to the so-called dual curvature measure:
\begin{equation}
	\widetilde{C}_q(K,\eta) = \int_{\nu_K(x)\in \eta} \langle x, \nu_K(x)\rangle |x|^{q-n}d\mathcal{H}^{n-1}(x), \text{ for each Borel } \eta\subset \sn.
\end{equation}
In particular, this implies that for each $p\in \mathbb{R}$, we have
\begin{equation}
\label{eq 71}
	\lim_{t\rightarrow 0}\frac{\widetilde{V}_q(K+_pt\cdot L)-\widetilde{V}_q(K)}{t} = \int_\sn h_L(v)h_K^{-p}(v)d\widetilde{C}_{q}(K,v):=\int_{\sn}h_L(v)d\widetilde{C}_{p,q}(K,v),
\end{equation}
where the Borel measure $\widetilde{C}_{p,q}(K,\cdot)$ is known as the $(p,q)$-dual curvature measure of $K$. Here $K+_p t\cdot L$ is known as the $L_p$ Minkowski combination between convex bodies. In particular, when $p\geq 1$ and $t>0$, the convex body $K+_pt\cdot L$ is defined so that its support function is given by $(h_K^p + th_L^p)^{1/p}$. The variational formula \eqref{eq 71}, as well as the definition of the $L_p$ combination, can be found in \cite{MR3783409}.

\subsection{Convex functions and log-concave functions}
Let $\mu$ be a Borel measure on some set $\Omega$. We will use $\L^1(\mu, \Omega)$ for the set of all $\mu$-measurable functions $f$ on $\Omega$ with $\int_{\Omega}|f|d\mu<\infty$. The set $\L^1_{\text{loc}}(\mu, \Omega)$ consists of functions $f$ such that $f\in \L^1(\mu, \mathfrak{K})$ for every compact set $\mathfrak K\subset \Omega$. Occasionally, when $\Omega=\rn$, we may write $\L^1(\mu)$. When $\mu$ is the standard Lebesgue measure, we may simply write $\L^1(\Omega)$, or, $\L^1=\L^1(\rn)$. When $\mu$ is a finite measure, we write $|\mu|$ for its total mass.

Let $\cvx(\rn)$ be the set of all lower semi-continuous, convex functions $\phi:\rn \rightarrow (-\infty, \infty]$ and  $\Lc(\rn)$ be the set of all upper semi-continuous log-concave functions $f$ that take the form $f=e^{-\phi}$ for some $\phi\in \cvx(\rn)$.

For any function $\phi:\rn \rightarrow [-\infty, \infty]$, the Legendre-Fenchel conjugate of $\phi$, denoted by $\phi^*$, is defined as
\begin{equation}
	\phi^*(y) = \sup_{x\in \rn} \{ \langle x,  y\rangle - \phi(x)\}.
\end{equation}
Note that from the definition, it is simple to see that $\phi^*\in \cvx(\rn)$, as long as $\phi\not\equiv +\infty$. It is also straightforward from the definition that the Legendre-Fenchel transform reverses order; in other words, if $\phi_1\leq \phi_2$, then $\phi_1^*\geq \phi_2^*$.

When restricting to $\cvx(\rn)$, the Fenchel-Moreau theorem states that the Legendre-Fenchel transform is an involution:
\begin{equation}
	\phi^{**}=\phi, \text{ for each }\phi\in \cvx(\rn).
\end{equation}

In a remarkable paper \cite{MR2480615}, Artstein-Avidan and Milman showed that any order-reversing involution on $\cvx(\rn)$ is essentially the Legendre-Fenchel transform.

In general (without assuming $\phi\in \cvx(\rn)$), by the definition of the Legendre-Fenchel transform, one may show that
\begin{equation}
	\phi^{**}\leq \phi,
\end{equation}
and if $\phi\geq 0$, then $\phi^{**}\geq 0$.

The expression $1_E$ denotes the characteristic function of some subset $E\subset \rn$; that is, $1_E(x)=1$ if $x\in E$ and $1_E(x)=0$ if $x\notin E$.

Let $K \in \mathcal{K}^n$ and
\begin{equation}
	\psi(x)= \begin{cases}
		0, & \text{ if }x\in K\\
		\infty, & \text{otherwise}.
	\end{cases}
\end{equation}
Note that $1_K = e^{-\psi}$. It follows from the definition that 
\begin{equation}
\label{eq 8820}
	\psi^* = h_K.
\end{equation}
Indeed, let $y\in \rn$ be arbitrary. Then, according to the definition of $\psi^*$, we have
\begin{equation}
	\psi^*(y) = \sup_{x\in \rn}\{\langle x,y\rangle-\psi(x)\}= \sup_{x\in K} \{\langle x, y\rangle\}=h_K(y),
\end{equation}
where the second equality follows from the fact that if $x\notin K$, then $\psi(x)=\infty$ and consequently one can restrict to $K$ in search of the supremum.

We shall require the following trivial facts.

\begin{prop}
\label{prop 12}
	Let $f=e^{-\phi}\in \Lc(\rn)$ and $q>0$. If
\begin{equation}
\label{eq 500}
	\liminf_{|x|\rightarrow \infty} \frac{\phi(x)}{|x|}>0,
\end{equation}	
then
\begin{equation}
	\widetilde{V}_q(f)=\int_\rn f(x)|x|^{q-n}dx<\infty.
\end{equation}
\end{prop}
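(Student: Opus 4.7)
The plan is to split the integral $\int_{\rn} f(x)|x|^{q-n}dx$ into a bounded piece and a tail piece, and bound each. The assumption \eqref{eq 500} furnishes constants $c>0$ and $R>0$ such that $\phi(x)\geq c|x|$ whenever $|x|\geq R$, which will control the tail; upper semi-continuity of $f$ will control the bounded piece.

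First I would note that on the compact ball $\overline{B(R)}$, the upper semi-continuous function $f$ attains a finite maximum $M=\max_{|x|\leq R} f(x)$. (If $f\equiv 0$ the statement is trivial, so assume $M>0$.) Passing to polar coordinates,
\begin{equation}
\int_{B(R)} f(x)|x|^{q-n}\,dx \;\leq\; M\,\omega_{n-1}\int_0^{R} r^{q-1}\,dr \;=\; \frac{M\,\omega_{n-1}R^{q}}{q},
\end{equation}
which is finite precisely because $q>0$ makes $r^{q-1}$ locally integrable at the origin. This handles the piece near $o$.

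For the tail $|x|\geq R$, the bound $\phi(x)\geq c|x|$ gives $f(x)=e^{-\phi(x)}\leq e^{-c|x|}$, so again by polar coordinates
\begin{equation}
\int_{|x|\geq R} f(x)|x|^{q-n}\,dx \;\leq\; \omega_{n-1}\int_{R}^{\infty} e^{-cr}\,r^{q-1}\,dr \;<\;\infty,
\end{equation}
since exponential decay dominates the polynomial factor $r^{q-1}$ for any $q>0$. Summing the two estimates yields $\widetilde{V}_q(f)<\infty$.

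There is essentially no obstacle here: the only mild subtlety is making sure both the near-origin singularity of $|x|^{q-n}$ (when $q<n$) and the far-field growth (when $q>n$) are simultaneously tamed, and the hypothesis $q>0$ together with the linear lower bound on $\phi$ at infinity does exactly that in the two estimates above.
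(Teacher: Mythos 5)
Your proof is correct and follows essentially the same route as the paper: split at a radius beyond which \eqref{eq 500} gives $\phi(x)\geq c|x|$, bound the tail by $e^{-c|x|}|x|^{q-n}$, and bound the piece near the origin using the local boundedness of the upper semi-continuous $f$ together with the integrability of $|x|^{q-n}$ on a ball (equivalently, of $r^{q-1}$ near $0$) for $q>0$. The explicit polar-coordinate computations are just a slightly more detailed rendering of the same estimates.
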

\begin{proof}
By \eqref{eq 500} and the convexity of $\phi$, there exist $C>0$ and $r_0>0$ such that
\begin{equation}
	\phi(x)>C|x|, \text{ for all } x\in B(r_0)^c.
\end{equation}
Therefore, we have
\begin{equation}
	\int_{B(r_0)^c} f(x)|x|^{q-n}dx\leq \int_{B(r_0)^c} e^{-C|x|}|x|^{q-n}dx<\infty.
\end{equation}
Since $f$ is upper semi-continuous, it is locally bounded from above. Therefore,
\begin{equation}
		\int_{B(r_0)} f(x)|x|^{q-n}dx\leq C_1\int_{B(r_0)}|x|^{q-n}dx<\infty.
	\end{equation}
\end{proof}

It is well-known that \eqref{eq 500} holds if and only if either of the following two statements holds:
\begin{enumerate}
	\item $f\in \L^1$;
	\item $\lim_{|x|\rightarrow \infty}\phi(x)=\infty$.
\end{enumerate}
See, for example, \cite{MR3341966}.

When a convex function $\phi$ is finite in a neighborhood of the origin, \cite[Theorem 11.8(c)]{MR1491362} combined with Proposition \ref{prop 12} immediately implies the following.

\begin{prop}
\label{prop 51}
	Let $q>0$. If $\phi:\rn\rightarrow [0,\infty]$ is finite in a neighborhood of the origin, then
	\begin{equation}
		\widetilde{V}_q(e^{-\phi^*})=\int_\rn |y|^{q-n} e^{-\phi^*(y)}dy<\infty.
	\end{equation}
\end{prop}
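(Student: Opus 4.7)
The plan is to observe that the hypothesis on $\phi$ forces the Legendre--Fenchel conjugate $\phi^*$ to have at least linear growth at infinity, so that Proposition \ref{prop 12} applies immediately to the log-concave function $e^{-\phi^*}$.

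First, I would extract from the assumption constants $r_0>0$ and $M\ge 0$ with $\phi(x)\le M$ for every $x\in B(r_0)$. In the intended setting $\phi\in\cvx(\rn)$, being finite on an open neighborhood of the origin places the origin in the interior of the effective domain of $\phi$, and the standard convex-analytic fact that convex functions are locally bounded above on the interior of their effective domain yields such a bound; more generally, this uniform bound is the natural reading of the finiteness hypothesis.

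Next, I would apply the definition of $\phi^*$, restricting the supremum to $B(r_0)$:
\[
    \phi^*(y)=\sup_{x\in\rn}\{\langle x,y\rangle-\phi(x)\}\ge \sup_{x\in B(r_0)}\{\langle x,y\rangle-M\}=r_0|y|-M,
\]
for every $y\in\rn$. This immediately gives
\[
    \liminf_{|y|\to\infty}\frac{\phi^*(y)}{|y|}\ge r_0>0,
\]
which is exactly condition \eqref{eq 500} in Proposition \ref{prop 12}. Since $\phi\not\equiv+\infty$ (it is finite at the origin), we have $\phi^*\in\cvx(\rn)$ with $\phi^*>-\infty$ everywhere, so $e^{-\phi^*}\in\Lc(\rn)$ takes values in $[0,+\infty)$. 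Applying Proposition \ref{prop 12} to $f=e^{-\phi^*}$ then yields $\widetilde{V}_q(e^{-\phi^*})<\infty$.

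The argument is essentially a two-line Legendre-duality estimate followed by a citation of the previous proposition; the only step requiring any thought is the passage from pointwise finiteness of $\phi$ on a neighborhood of the origin to a uniform upper bound on a (possibly smaller) ball, which I regard as the mild technical obstacle.
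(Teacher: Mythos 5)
Your core estimate is exactly the paper's: bound $\phi$ above by $M$ on a small ball $B(r_0)$, deduce $\phi^*(y)\ge r_0|y|-M$ from the definition of the conjugate, and conclude finiteness of the weighted integral (the paper bounds the integral directly by $\int |y|^{q-n}e^{-r_0|y|/2}e^M\,dy$ rather than citing Proposition \ref{prop 12}, but that difference is immaterial).

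The gap is in the step you yourself flag and then dismiss. The proposition is stated for an arbitrary function $\phi:\rn\rightarrow[0,\infty]$ that is merely finite at every point of a neighborhood of the origin; convexity is \emph{not} assumed, and pointwise finiteness on a ball does not imply a uniform upper bound on any smaller ball (take $\phi=0$ except $\phi(x_k)=k$ on a sequence $x_k\rightarrow o$). Your remark that the uniform bound is ``the natural reading of the finiteness hypothesis'' is a reinterpretation of the statement, not an argument, and restricting to ``the intended setting $\phi\in\cvx(\rn)$'' proves a weaker proposition than the one asserted. This generality is not decorative: the paper later applies Proposition \ref{prop 51} to perturbations of the form $\phi_0+t\zeta$ with $\zeta\in C_c(\rn)$ (in Lemma \ref{lemma 13}), which are not convex. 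The paper closes the gap with a one-line reduction that your proof is missing: since $\phi^{**}\le\phi$, the function $\phi^{**}$ is again finite near the origin, it satisfies $\phi^{**}\ge 0$ because $\phi\ge 0$, and $(\phi^{**})^*=\phi^*$; so one may assume $\phi\in\cvx(\rn)$ without loss of generality, and only then does local boundedness (from convexity plus finiteness near the origin) deliver the constant $M$. Adding this reduction makes your argument complete and essentially identical to the paper's.
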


\section{Functions of anisotropic weighted total variation}
\label{sec BV}

Let $\Omega\subset \rn$ be an open set. Write $V\Subset \Omega$ if an open set $V$ is compactly contained in $\Omega$, that is, the closure of $V$  is compact and is a subset of $\Omega$. The set $C_c^1(V,\rn)$ consists of all $C^1$ functions from $V$ to $\rn$ with compact support. 
 We say a function $f\in \L^1_{\loc}(\Omega)$ is locally of bounded variation (\emph{i.e.}, $f\in BV_{\loc}(\Omega)$) if for each open $V\Subset \Omega$, we have
\begin{equation}
\label{eq 101}
	TV(f; V) = \sup\left\{\int_{V} f\Div T\, dx: T\in C_c^1(V, \rn), |T(x)|\leq 1, \forall x\right\}<\infty.
\end{equation}
Intuitively speaking, functions of locally bounded variation are those whose distributional derivatives are Radon measures. Indeed, the Structure Theorem for $BV_{\loc}$ functions states (see Theorem 5.1 in \cite{MR3409135}) that if $f\in BV_{\loc}(\Omega)$, then there exist a Radon measure $\|Df\|$ on $\Omega$ and a $\|Df\|$-measurable map $\sigma_f:\Omega\rightarrow \rn$ such that $|\sigma_f|=1$ $\|Df\|$-almost everywhere with
\begin{equation}
\label{eq 104}
	\int_{\Omega} f\Div T\, dx = -\int_\Omega \langle T, \sigma_f\rangle d\|Df\|,
\end{equation}
for all $T\in C_c^1(\Omega, \rn)$. When $f\in \L^1(\Omega)$ and $\|Df\|(\Omega)$ is finite, we say $f$ is of bounded variation on $\Omega$; that is $f\in BV(\Omega)$. The space of $BV(\Omega)$ is well studied and we refer the readers to the classical books \cite{MR3409135, MR1857292} for additional properties of $BV$ functions.

There have been several generalizations to the definition of $BV(\Omega)$ (and correspondingly $BV_{\loc}(\Omega)$). One direction of such generalization is that the Euclidean norm (applied to $T$) in \eqref{eq 101} can be replaced by any (potentially asymmetric) norm. Fix $L\in \mathcal{K}_o^n$, let $\|\cdot\|_{L}$ and $\|\cdot\|_{L^*}$ be as defined in \eqref{eq 210}. Note that since $L$ is compact and contains the origin in its interior, both $\|\cdot\|_L$ and $\|\cdot\|_{L^*}$ are equivalent to the standard Euclidean norm and therefore, the space $BV_L(\Omega)$ (and $BV_{L,\loc} (\Omega)$, resp.) consisting of $\L^1(\Omega)$ functions with
\begin{equation}
	TV_L(f; \Omega) = \sup\left\{\int_{\Omega} f\Div T\, dx: T\in C_c^1(\Omega, \rn), \|T(x)\|_L\leq 1, \forall x \right\}<\infty,
\end{equation}
remains unchanged when compared to $BV(\Omega)$ (and $BV_{\loc}(\Omega)$, resp.). However, the anisotropic total variation $TV_L(f; \Omega)$ is generally not the same as $TV(f;\Omega)$. As a matter of fact, when $f=1_K$ for some $K\in \mathcal{K}^n$, then $TV(1_K;\rn)$ gives the surface area of $K$ whereas $TV_L(1_K; \rn)$ gives the mixed volume $V_1(K,L)=\int_{\sn}h_L dS_K$. For $f\in BV_{\loc}(\Omega)$, we may define the anisotropic total variation measure with respect to $L$ by
\begin{equation}
	\|\shortminus Df\|_{L^*} = h_L(\shortminus \sigma_f) \|Df\|.
\end{equation}
It can be shown that $f\in BV_L(\Omega)$ if and only if $\|\shortminus Df\|_{L^*}$ is a finite measure. Moreover, we have $TV_L(f; \Omega)=\|\shortminus Df\|_{L^*}(\Omega)$. Functions of bounded anisotropic total variation were studied in, for example, \cite{MR3055988, MR2672283} where anisotropic isoperimetric inequalities and anisotropic Sobolev inequalities were studied for sets of finite perimeter and functions of bounded variation. It is important to note that many of the classical results mentioned in the standard books \cite{MR3409135, MR1857292} work in the anisotropic setting with only very minor alterations to the proofs.

Another direction of generalization to $BV(\Omega)$ is to replace the Lebesgue measure in $\rn$ by a generic measure. Things start to get complicated in this setting. As an example, the approximation of such $BV$ functions by smooth ones might fail. This explains why there are non-equivalent ways of defining $BV$ functions in a generic measure space $(\rn, \mu)$. We mention \cite{MR2005202} for one of the approaches where $\mu$ is a doubling measure. When $\mu = \omega(x)dx$ is absolutely continuous with respect to Lebesgue measure, another way of generalizing the classical total variation (not equivalent to the one given in \cite{MR2005202}; see Section 5.1 in \cite{MR2005202}) was given in \cite{MR1864805}.

For our purpose, we adopt the following definition. We say a function $f\in \mathcal{L}^1(\omega dx, \Omega)$ is of bounded anisotropic weighted variation (or $f\in BV_{L, \omega}(\Omega))$ if $f\in BV_{\loc}(\Omega)$ and $\omega\in \L^1(\|\shortminus Df\|_{L^*}, \Omega)$. In this case, we define the $(L,\omega)$-anisotropic total variation of $f$ to be
	\begin{equation}
	\label{eq 21}
		 TV_{L,\omega}(f;\Omega)=\int_{\Omega}\omega d\|\shortminus Df\|_{L^*}.
	\end{equation}
To see how this is connected to the classical definition \eqref{eq 101}, we mention that when $\omega: \rn \rightarrow [0,\infty]$ is a lower semi-continuous function with $\omega(x)>0$ for all $x\neq o$, using an approximation argument in both $f$ and $\omega$, one can see that
\begin{equation}
	TV_{L, \omega}(f;\Omega)= \sup \left\{\int_{\Omega} f\Div T\, dx: T\in C_c^1(\Omega, \rn), \|T(x)\|_L\leq \omega(x), \forall x \right\}.
\end{equation}
Since this representation is not required in the current work, we do not provide a proof here.

 Let $E\subset \rn$ be a measurable set. When $1_E\in BV_{L,\omega}(\rn)$, we say $E$ has finite $(L,\omega)$-anisotropic weighted perimeter and write $\Per_{L,\omega}(\partial E) =TV_{L,\omega}(1_E;\rn)$.

 Let $f:\rn\rightarrow \mathbb{R}$ and $t\in \mathbb{R}$, write
 \begin{equation}
 	[f>t]=\{x\in \rn: f(x)>t\}.
 \end{equation}

 For $BV$ functions, the following version of the classical coarea formula can be found in Figalli-Maggi-Pratelli \cite[(2.22)]{MR2672283}: if $f\in BV(\rn)$ and $\zeta:\rn \rightarrow [0,\infty]$ is a Borel function, then
 \begin{equation}
 	\int_\rn \zeta d\|\shortminus Df\|_{L^*}  = \int_{-\infty}^\infty\left( \int_{\rn} \zeta d\|\shortminus D1_{[f>t]}\|_{L^*}\right)dt.
 \end{equation}
 In particular, this implies
\begin{equation}
 \label{eq 26}
 	TV_{L,\omega}(f;\rn) = \int_{-\infty}^\infty \Per_{L,\omega}(\partial [f>t])dt.
 \end{equation}

 \section{Log-concave functions and their $(L,\omega)$ total variation}
 \label{section log concave variation formula}

Throughout this section, if not specified otherwise, we let $q>0$.

  It is well-known that the set of convex bodies can be embedded naturally into $\Lc(\rn)$ via their characteristic functions. Let $f = e^{-\phi}$, $g=e^{-\psi}$ be in $\Lc(\rn)$ and $s,t>0$. The sup-convolution between $f$ and $g$ can be defined via the Legendre-Fenchel conjugate of their respectively associated convex functions:
\begin{equation}
\label{eq 9.1.2}
	s\cdot f\oplus t\cdot g= e^{-(s\phi^*+t\psi^*)^*}.
\end{equation} 
When $s=1-t$, \eqref{eq 9.1.2} coincides with \eqref{eq 9.1.1}. For the purpose of the current work, we consider the special case $s=1$, which we will write as $f\oplus t\cdot g$. It is well-known that when $f=1_K$ and $g=1_L$ are characteristic functions of convex bodies, then
\begin{equation}
	1_K\oplus t\cdot 1_L = 1_{K+tL},
\end{equation}
where $K+tL$ is the usual Minkowski combination between convex bodies. Therefore, the supremum convolution $\oplus$ can be viewed as a natural generalization of the Minkowski addition for convex bodies.

For each $q>0$, the $(q-n)$-th moment of a log-concave function $f$ is defined as
\begin{equation}
	\widetilde{V}_q(f) = \int_\rn |x|^{q-n}f(x)dx.
\end{equation}
When $f=1_K$ for some convex body $K\in \ko$, by polar coordinates, it is simple to see that
\begin{equation}
	\widetilde{V}_q(1_K) = \frac{1}{q}\int_{\sn}\rho_K^q(u)du = \widetilde{V}_q(K),
\end{equation}
where $\widetilde{V}_q(K)$ is the $q$-th dual quermassintegral of $K$. Therefore, the quantity $\widetilde{V}_q$ on $\Lc(\rn)$ can be viewed as the natural generalization of dual quermassintegrals for convex bodies.

In the seminal work \cite{MR3573332, MR3783409}, the differentials of dual quermassintegrals were studied, which led to a family of long-sought-for geometric measures known as \emph{$(p,q)$-dual curvature measures}. These measures and their characterization problems (called \emph{Minkowski-type problems}) have been intensively studied in the past few years and have already led to many interesting conjectures regarding isoperimetric inequalities as well as the discovery of novel curvature flows.

It is natural to wonder whether the same philosophy can be applied in the space of log-concave functions---given that all the elements (dual quermassintegrals and Minkowski addition) have their natural counterparts in the larger space. It is the intention of the current section to demonstrate that the answer is yes, with some minor restrictions on the log-concave functions $f$ and $g$. To explain why these restrictions are needed, note that the variational formulas demonstrated in \cite{MR3573332, MR3783409}  require the convex body to contain the origin as an interior point and consequently the family of $(p,q)$-dual curvature measures are only defined for such bodies. See, for example, equation (1.9) in \cite{MR3573332}. Therefore, it is natural to expect certain restrictions on $f$ that mimic the requirement that $K$ has the origin in the interior as in the convex body case.

In the following, we show that the variation of the moment of $f$ is strongly connected to the theory of functions of bounded anisotropic weighted variation in Section \ref{sec BV}.

We will study the existence of the following limit
\begin{equation}
\label{eq 33}
	\delta_q(f,g) = \lim_{t\rightarrow 0^+} \frac{\widetilde{V}_q(f\oplus t\cdot g)-\widetilde{V}_q(f)}{t}.
\end{equation}
Note that it is not clear at all why the limit should exist.

For the rest of the section, we write $\omega_q$ for the weight function
\begin{equation}
	\omega_q(x) = |x|^{q-n},
\end{equation}
for $q>0$. It is important to emphasize that the following proofs actually work for more general weight functions. As an example, the proofs (with very minor modifications) will work for the Gaussian weight function $\omega(x)=e^{-|x|^2/2}$.

We require the following lemma from \cite{MR3573332} (see also Theorem 6.5 in \cite{MR3783409}).
\begin{lemma}[\cite{MR3573332}]\label{lemma 122} Let $K\in \mathcal{K}_o^n$ and $g:\sn \rightarrow \mathbb{R}$ be a continuous function. For sufficiently small $|t|$, define $h_t:\sn\rightarrow (0,\infty)$ by 
\begin{equation}
	h_t = h_K+tg.
\end{equation}
Then, we have
\begin{equation}
\label{eq 35}
	\lim_{t\rightarrow 0}\frac{\widetilde{V}_q([h_t])-\widetilde{V}_q([h_0])}{t} = \int_\sn g(v)d\widetilde{C}_{1,q}(K,v),
\end{equation}	
where the definition of $\widetilde{C}_{1,q}(K,\cdot)$ is given in Section \ref{section preliminary convex body}.
\end{lemma}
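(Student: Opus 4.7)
The plan is to differentiate the radial-function representation of $\widetilde{V}_q$ and then convert back to a surface integral over $\partial K$. By definition
\[
\widetilde{V}_q(K+tL) = \frac{1}{q}\int_{\sn}\rho_{K+tL}^{q}(u)\,du,
\]
so everything reduces to understanding how $\rho_{K+tL}(u)$ varies with $t$ at $t=0$. First I would establish the pointwise derivative
\[
\frac{d}{dt}\Big|_{t=0^+}\rho_{K+tL}(u) \;=\; \frac{h_L\bigl(\nu_K(\rho_K(u)u)\bigr)}{u\cdot \nu_K(\rho_K(u)u)},
\]
valid for $\mathcal{H}^{n-1}$-a.e.\ $u\in\sn$ (namely those $u$ for which the Gauss map at $\rho_K(u)u\in\partial K$ is single-valued). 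The idea is elementary: at $x=\rho_K(u)u$, the outer normal $\nu_K(x)$ produces a supporting half-space $\{y\cdot \nu_K(x)\le h_K(\nu_K(x))\}$ for $K$, and the supporting half-space for $K+tL$ with the same normal has right-hand side $h_K(\nu_K(x))+t\,h_L(\nu_K(x))$. Intersecting with the ray $\{su:s>0\}$ gives the claimed first-order expansion.

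Next I would justify interchanging the limit and the integral. Since $K\in\ko$, for $t$ in a small interval $[0,t_0]$ one has $K+tL\subset (1+ct)K$ for some $c=c(K,L)>0$, hence
\[
\rho_{K+tL}(u)\le (1+ct)\rho_K(u),
\]
so the difference quotients $t^{-1}(\rho_{K+tL}^{q}-\rho_K^{q})$ are dominated uniformly in $t\in(0,t_0]$ by a constant multiple of the bounded function $\rho_K^q$ on $\sn$. Dominated convergence then yields
\[
\lim_{t\to 0^+}\frac{\widetilde{V}_q(K+tL)-\widetilde{V}_q(K)}{t}
= \int_{\sn}\rho_K^{q-1}(u)\,\frac{h_L(\nu_K(\rho_K(u)u))}{u\cdot\nu_K(\rho_K(u)u)}\,du.
\]

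Finally, I would push this integral from $\sn$ to $\partial K$ using the radial parametrization $u\mapsto x=\rho_K(u)u$, whose Jacobian satisfies $d\mathcal{H}^{n-1}(x) = \rho_K(u)^{n-1}(u\cdot\nu_K(\rho_K(u)u))^{-1}du$ (this is exactly the identity that reconciles the radial and Gauss formulas for the volume of $K$). Substituting and using $|x|=\rho_K(u)$, the right-hand side becomes
\[
\int_{\partial K} h_L(\nu_K(x))\,|x|^{q-n}\,d\mathcal{H}^{n-1}(x),
\]
which, after writing $|x|^{q-n} = \frac{x\cdot\nu_K(x)}{x\cdot\nu_K(x)}|x|^{q-n}$ and pushing forward by $\nu_K$, equals $\int_{\sn} h_L(v)h_K^{-1}(v)\,d\widetilde{C}_q(K,v) = \int_{\sn}h_L(v)\,d\widetilde{C}_{1,q}(K,v)$ by the definitions recalled in Section~\ref{section preliminary convex body}.

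The main technical obstacle is step two: producing a single integrable majorant for the difference quotients uniformly in $t$ in the regime where $L$ is only a compact convex set (possibly not containing the origin). The inclusion $K+tL\subset(1+ct)K$ is the cleanest way around this, but it depends on $K\in\ko$, which is exactly the hypothesis. Everything else is a change-of-variables bookkeeping exercise; the only subtlety is that $\nu_K$ is only defined $\mathcal{H}^{n-1}$-a.e.\ on $\partial K$, but since it is defined on the complement of a set of measure zero, this does not affect any of the integrals above.
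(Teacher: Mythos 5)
Your route---a.e.\ pointwise differentiation of $\rho_{K+tL}$, a uniform majorant to pass the limit inside the spherical integral, and transport to $\partial K$ via the radial map---is exactly the approach of the cited source \cite{MR3573332}; the paper itself states this lemma by citation and gives no proof, and its nearby Lemma \ref{lemma a11} and Corollary \ref{corollary 11} reproduce the same kind of uniform bounds through $\log\rho_{K_t}$. Your computation is correct, and the change-of-variables bookkeeping at the end is right.

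Two points should be tightened, both standard. First, since $L$ need not contain the origin, the inclusion $K+tL\subset(1+ct)K$ only bounds the difference quotients from above; dominated convergence needs the two-sided bound, which you get from the companion inclusion $(1-ct)K\subset K+tL$ (indeed $h_K+th_L\ge h_K-t\max_L|x|\ge(1-ct)h_K$ because $h_K\ge r$ on $\sn$ for some $r>0$), yielding $\bigl|\rho_{K+tL}^q-\rho_K^q\bigr|\le Ct\,\rho_K^q$ for small $t$. Second, the supporting-half-space picture only gives the first-order \emph{upper} estimate for $\rho_{K+tL}(u)$; the matching lower estimate at points where the normal is unique is precisely Aleksandrov's variational lemma (see (4.12) and Lemma 2.8 in \cite{MR3573332}), which you can simply invoke rather than re-derive. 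With these two remarks the argument is complete and coincides with the original proof.
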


When $g=h_L$ for some compact convex $L\subset\rn$, we will denote the right-hand-side of \eqref{eq 35} by $\widetilde{V}_{1,q}(K,L)$; that is
\begin{equation}
\label{eq 8822}
	\widetilde{V}_{1,q}(K,L)= \int_\sn h_L(v)d\widetilde{C}_{1,q}(K,v)= \int_\sn \frac{h_L(v)}{h_K(v)}d\widetilde{C}_{q}(K,v).
\end{equation}

Recall that a log-concave function is almost everywhere differentiable.
It was shown in Rotem \cite{https://doi.org/10.48550/arxiv.2006.16933} that if $f\in \Lc(\rn)$ and $f$ is $\L^1$, then $f \in BV(\rn)$ and its distributional derivative is given by 
\begin{equation}
\label{eq 32}
	\sigma_f\|Df\| = \nabla f\,dx-f\nu_{K_f} d\mathcal{H}^{n-1}|_{\partial K_f},
\end{equation}
where $K_f$ is the support of $f$ and is thus convex by the fact that $f\in \Lc(\rn)$. With this in mind, by \eqref{eq 21}, it is simple to see that if $K,L\in \mathcal{K}_o^n$, then $K$ is of finite anisotropic weighted perimeter. Indeed,
\begin{equation}
\label{eq 31}
\begin{aligned}
	\Per_{L,\omega_q}(\partial K) &= TV_{L,\omega_q}(1_K, \rn) \\
	&= \int_{\partial K} h_L(\nu_{K}(x))|x|^{q-n}dx \\
	&= \int_{\sn} h_L(v)d\widetilde{C}_{1,q}(K,v)\\
	&=\widetilde{V}_{1,q}(K,L)<\infty.
\end{aligned}
\end{equation}

The following lemma is due to Huang-LYZ \cite{MR3573332}. We provide a short proof here for the convenience of the readers as the lemma as stated here is buried in the long proof of Lemma 4.1 in \cite{MR3573332}.
\begin{lemma}
	\label{lemma 881}
	Let $h_0,h_1: \sn \rightarrow (0,\infty)$ be continuous.
	Denote $K_i = [h_i]$. Then, for every $u\in \sn$, we have
	\begin{equation}
		\left|\log\rho_{K_1}(u)-\log \rho_{K_0}(u)\right|\leq \max_\sn |\log h_{1}-\log h_0|.
	\end{equation}
\end{lemma}
\begin{proof}
	We fix an arbitrary $u\in \sn$. By \eqref{eq 884} and \eqref{eq 885}, for each $i= 0,1$, we have 
	\begin{equation}
	\label{eq 888}
		\rho_{K_i}(u)\langle u, v\rangle \leq h_i(v), \qquad \forall v\in \sn
	\end{equation}
	and there exists $v_i\in \sn$ such that 
	\begin{equation}
	\label{eq 887}
		\rho_{K_i}(u)\langle u, v_i\rangle= h_i(v_i).
	\end{equation}
	
	By \eqref{eq 887} and \eqref{eq 888},
	\begin{equation}
	\label{eq 889}
		\begin{aligned}
			\log \rho_{K_1}(u)-\log\rho_{K_0}(u)&=\log \rho_{K_1}(u)-\log h_0(v_0)+\log \langle u, v_0\rangle\\
			&\leq \log h_1(v_0)- \log  h_0(v_0)
		\end{aligned}
	\end{equation}
	Reversing the role of $K_1$ and $K_0$ immediately gives
	\begin{equation}
		\label{eq 8810}
		\log \rho_{K_0}(u)-\log\rho_{K_1}(u)\leq \log h_0(v_1)-\log h_1(v_1).
	\end{equation}
	The desired result immediately follows from \eqref{eq 889} and \eqref{eq 8810}.
\end{proof}
\begin{lemma}
\label{lemma a11}
 	Let $K \in \mathcal{K}_o^n$ and $r_0>0$ be such that
 	\begin{equation}
 		B(r_0)\subset K.
 	\end{equation}
 	Let $L$ be a compact convex subset of $\rn$ and denote
 	\begin{equation}
 		K_t = K+tL.
 	\end{equation}
 	Then, there exist $C>0, \delta_0>0$ which depend only on $r_0$ and $\max_{L}|x|$, such that
 	\begin{equation}
 		\left|\frac{\log \rho_{K_t}-\log \rho_K}{t}\right|<C \qquad \text{on }\sn,
 	\end{equation}
 	for every $t\in (0,\delta_0)$.
 \end{lemma}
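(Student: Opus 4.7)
The plan is to reduce the bound to a Hausdorff-distance estimate between $K$ and $K_t$ together with the standard trick that an inclusion of the form $N\subset M+B(\epsilon)$ can be upgraded to $N\subset (1+\epsilon/r)M$ whenever $B(r)\subset M$ and $M$ is a convex body containing the origin. Throughout, I will set $R:=\max_{y\in L}|y|$ and $\delta_0:=r_0/(2R)$, both depending only on $r_0$ and $\max_L|x|$, as required.

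First I would verify that $K_t$ contains a fixed ball around the origin for all $|t|<\delta_0$. Given $z\in B(r_0/2)$ and any fixed $y_0\in L$, the point $z-ty_0$ satisfies $|z-ty_0|\le r_0/2+|t|R\le r_0$, so $z-ty_0\in B(r_0)\subset K$, and therefore $z=(z-ty_0)+ty_0\in K+tL=K_t$; in particular $\rho_{K_t}(u)\ge r_0/2$ uniformly in $u\in S^{n-1}$. Next, a direct computation gives $d_H(K,K_t)\le |t|R$: any $x\in K$ lies within distance $|t|R$ of $x+ty_0\in K_t$, and conversely any $x+ty\in K_t$ (with $y\in L$) lies within $|t|R$ of $x\in K$. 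This yields the two-sided inclusion $K\subset K_t+B(|t|R)$ and $K_t\subset K+B(|t|R)$.

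Now the inclusion trick applies in both directions. Since $B(r_0)\subset K$, we have $B(|t|R)\subset (|t|R/r_0)K$, so convexity of $K$ gives $K_t\subset K+(|t|R/r_0)K=(1+|t|R/r_0)K$, which passes to radial functions as $\rho_{K_t}(u)\le (1+|t|R/r_0)\rho_K(u)$. Swapping the roles of $K$ and $K_t$ and using $B(r_0/2)\subset K_t$ from the previous step gives $\rho_K(u)\le (1+2|t|R/r_0)\rho_{K_t}(u)$. Taking logarithms and using $\log(1+s)\le s$ for $s\ge 0$ produces $|\log \rho_{K_t}(u)-\log \rho_K(u)|\le 2|t|R/r_0$ for every $u\in S^{n-1}$, and after dividing by $|t|$ this establishes the claim with $C=2R/r_0$.

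I do not expect a serious obstacle here, since the argument uses only convexity and elementary set containments, and it treats both signs of $t$ uniformly because the crude bound $tL\subset B(|t|R)$ is insensitive to the sign of $t$. The only delicate point, and the reason one needs the smallness condition $|t|<\delta_0$, is to secure the origin as an interior point of $K_t$ so that $\rho_{K_t}$ and its logarithm are well defined; this is exactly what the ball-containment step supplies.
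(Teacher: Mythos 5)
Your proof is correct, and it takes a genuinely different route from the paper's. You argue entirely on the level of sets and radial functions: you first secure $B(r_0/2)\subset K_t$ for $|t|<r_0/(2R)$, then use the crude bound $tL\subset B(|t|R)$ to obtain the two-sided Hausdorff estimate $K\subset K_t+B(|t|R)$ and $K_t\subset K+B(|t|R)$, and upgrade each inclusion to a dilation, $K_t\subset(1+|t|R/r_0)K$ and $K\subset(1+2|t|R/r_0)K_t$, via the standard fact that $B(r)\subset M$ implies $M+B(\varepsilon)\subset(1+\varepsilon/r)M$; this passes directly to radial functions and gives $|\log\rho_{K_t}-\log\rho_K|\le 2R|t|/r_0$, so $C=2R/r_0$ works (enlarge $C$ trivially if strict inequality is insisted upon). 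The paper instead works through support functions: it expands $\log h_{K_t}=\log(h_K+th_L)=\log h_K+t\,h_L/h_K+o(t,\cdot)$ with a uniform bound on the remainder, and then transfers the sup-norm bound on $\log h_{K_t}-\log h_K$ to the radial functions by invoking (4.12) and Lemma 2.8 of \cite{MR3573332}. Your argument is more elementary and self-contained (no appeal to that transfer lemma) and gives an explicit constant; the paper's expansion has the advantage of exhibiting the first-order term $h_L/h_K$ that reappears in the variational formula it is building toward. One small remark: for $t<0$ the set sum $K+tL=\{x+ty: x\in K,\ y\in L\}$, which is what your bound $tL\subset B(|t|R)$ controls, is not the same body as the one with support function $h_K+th_L$ used in the paper's computation (the two agree for $t\ge0$); your estimate covers the former, an analogous one-line estimate covers the latter (since $h_K\ge r_0$ and $h_L\le R$ give $h_K+th_L\ge(1-|t|R/r_0)h_K$), and only $t\to0^+$ is used downstream, so nothing is lost either way.
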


 \begin{proof}
 Since $K\in \mathcal{K}_o^n$ and $L$ is a compact convex set, for sufficiently small $t>0$ dependent only on $r_0$ and $\max_L |x|$, we have that $K_t\in \mathcal{K}_o^n$. For simplicity, we will write $h_t= h_{K_t}$. Note that $h_t$ is a positive continuous function on $\sn$ and $[h_t]=K_t$. 

 Note that on $\sn$,
 	\begin{equation}
 	\label{eq 8812}
 	\begin{aligned}
 		\log h_{t} &= \log (h_0+th_L) \\
 		&= \log h_0 + \log(1+th_L/h_K) \\
 		&= \log h_0+ t\frac{h_L}{h_K} + o(t,\cdot),
 	\end{aligned}
 	\end{equation}
 	where
 	\begin{equation}
 	\label{eq 8813}
 		|o(t,v)| \leq \frac{h_L(v)^2}{2(h_K(v)-|th_L(v)|)^2}t^2, \quad v\in \sn.
 	\end{equation}

 	It is simple to see that there exist $\delta_0, C_1>0$ that only depend on $r_0$ and $\max_{L}|x|$, such that for each $0<t<\delta_0$, we have
 	\begin{equation}
 	\label{eq 8910}
 		\left|\frac{o(t, v)}{t}\right|<C_1,
 	\end{equation}
 	uniformly in $t$ and $v$. By Lemma \ref{lemma 881}, \eqref{eq 8812}, and \eqref{eq 8910}, for each fixed $0<t<\delta_0$
 	\begin{equation}
 		\left|\frac{\log \rho_{K_t}-\log \rho_K}{t}\right|\leq  \frac{\max_{v\in \sn}\left|\log h_{t}(v)-\log h_{0}(v)\right|}{t}<C,
 	\end{equation}
 	where $C>0$ only depends on $r_0$, and $\max_{L}|x|$.
 \end{proof}

 \begin{coro}
 \label{corollary 11}
 	Under the same assumptions as in Lemma \ref{lemma a11}, for each $q>0$, there exists $\delta_0>0$ dependent only on $r_0$ and $\max_L |x|$ such that 
 	\begin{equation}
 		\left|\frac{\rho^q_{K_t}-\rho^q_K}{t}\right|<2^qCq\rho_K^q, \quad \forall t\in (0,\delta_0).
 	\end{equation}
 Here $C$ is from Lemma \ref{lemma a11}.
 \end{coro}
 \begin{proof}
 By Lemma \ref{lemma a11} and the mean value theorem,
 	\begin{equation}
 		\begin{aligned}
 			\left|\frac{\rho^q_{K_t}-\rho^q_K}{t}\right|&= \left|\frac{e^{q\log \rho_{K_t}}-e^{q\log \rho_{K}}}{\log \rho_{K_t}-\log \rho_{K}}\right|\left|\frac{\log \rho_{K_t}-\log \rho_K}{t}\right|\\
 			&\leq C\left|\frac{e^{q\log \rho_{K_t}}-e^{q\log \rho_{K}}}{\log \rho_{K_t}-\log \rho_{K}}\right|\\
 			& = Cq\theta^q,
 		\end{aligned}
 	\end{equation}
 	where $\theta$ is between $\rho_{K_t}$ and $\rho_K$. Since $B(r_0)\subset K$ and $L$ is compact, for sufficiently small $\delta_0$ (dependent on $r_0$ and $\max_L |x|)$, we have $K_t\subset 2K$ for each $0<t<\delta_0$. Consequently, $\theta<2\rho_K$, which immediately gives the desired bound.
 \end{proof}

Denote $[f\geq s]=\{x\in \rn: f(x)\geq s\}$.

 \begin{theorem}
  \label{theorem delta layer cake}
 	Let $q>0$, $f\in \Lc(\rn)$ with non-zero finite $\L^1$ norm and $L$ be a compact convex subset of $\rn$ with $o\in L$. Assume $f$ achieves its maximum at $o$, and
 		\begin{equation}
 		\label{eq 400}
 			\limsup_{x\rightarrow o} \frac{|f(x)-f(o)|}{|x|^{\alpha+1}}<\infty,
 		\end{equation}
 		for some $0<\alpha<1$.
 	Then,
 	\begin{equation}
 		\delta_q (f, 1_L)= \int_{0}^\infty \widetilde{V}_{1,q}([f\geq s],L)\,ds<\infty.
 	\end{equation}
 \end{theorem}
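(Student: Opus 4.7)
My approach is to reduce the functional statement to the convex-body formula of Lemma \ref{lemma 122} via a layer-cake decomposition. Writing $K_s:=[f\geq s]$ and unwinding the definition $f\oplus t\cdot 1_L=e^{-(\phi^{*}+th_L)^{*}}$ via the infimal-convolution identity $f\oplus t\cdot 1_L(z)=\sup_{w\in L}f(z-tw)$ yields the key set identity
\begin{equation*}
[f\oplus t\cdot 1_L\geq s]\ =\ K_s+tL,\qquad s,\,t>0.
\end{equation*}
Combined with the polar-coordinate formula $\int_K|x|^{q-n}dx=\widetilde V_q(K)$ and Fubini in $s$, this gives
\begin{equation*}
\frac{\widetilde V_q(f\oplus t\cdot 1_L)-\widetilde V_q(f)}{t}\ =\ \int_0^\infty \Phi_s(t)\,ds,\qquad \Phi_s(t):=\frac{\widetilde V_q(K_s+tL)-\widetilde V_q(K_s)}{t},
\end{equation*}
so the theorem reduces to justifying the interchange of $\lim_{t\to 0^+}$ with the $s$-integral.

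Pointwise convergence is immediate: the hypothesis that $f$ attains its maximum at $o$, combined with continuity of $f$ at $o$ (forced by \eqref{eq 400}), gives $K_s\in\ko$ for each $0<s<f(o)$, so Lemma \ref{lemma 122} produces $\Phi_s(t)\to\widetilde V_{1,q}(K_s,L)$ a.e.\ in $s$. For the interchange I would apply dominated convergence with the envelope provided by Corollary \ref{corollary 11}: integrating the pointwise $\rho$-bound over $S^{n-1}$ produces
\begin{equation*}
0\ \leq\ \Phi_s(t)\ \leq\ 2^q q\,C(r_0(s))\,\widetilde V_q(K_s),\qquad 0<t<\delta_0(s),
\end{equation*}
where $r_0(s)$ is the inradius of $K_s$ and inspection of the proof of Lemma \ref{lemma a11} shows $C(r_0)\sim 1/r_0$ and $\delta_0(r_0)\sim r_0$ as $r_0\to 0$. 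The regularity hypothesis \eqref{eq 400} enters here critically: $f(o)-f(x)\leq M|x|^{\alpha+1}$ near $o$ gives the quantitative lower bound $r_0(s)\geq ((f(o)-s)/M)^{1/(\alpha+1)}$ for $s$ close to $f(o)$, whence $C(r_0(s))\lesssim (f(o)-s)^{-1/(\alpha+1)}$. Since $\widetilde V_q(K_s)\leq\widetilde V_q(K_{f(o)/2})$ on this region and $\int_0^\infty\widetilde V_q(K_s)\,ds=\widetilde V_q(f)<\infty$ by Proposition \ref{prop 12} controls the small-$s$ contribution, the envelope $F(s):=2^q q\,C(r_0(s))\,\widetilde V_q(K_s)$ lies in $L^1(0,f(o))$ exactly because $1/(\alpha+1)<1$, i.e., because $\alpha>0$. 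The same argument yields finiteness of the right-hand side.

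The main technical obstacle is that the bound on $\Phi_s(t)$ holds only for $t<\delta_0(s)$, and $\delta_0(s)\downarrow 0$ as $s\uparrow f(o)$, so standard DCT does not apply directly. I would handle this by a three-piece splitting: for each small $\delta>0$, with $c:=M(\max_{x\in L}|x|)^{\alpha+1}$, write
\begin{equation*}
\int_0^\infty \Phi_s(t)\,ds\ =\ \int_0^{f(o)-\delta}\!+\int_{f(o)-\delta}^{f(o)-ct^{\alpha+1}}\!+\int_{f(o)-ct^{\alpha+1}}^{f(o)}\ .
\end{equation*}
On the first piece $r_0(s)$ is uniformly bounded below in $s$, so $\delta_0(s)$ is uniformly bounded below and DCT applies directly. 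On the middle piece one still has $r_0(s)\geq t\max_{x\in L}|x|$, hence the envelope $F$ dominates uniformly in $t$ and DCT again yields convergence. On the ``very bad'' final strip, the crude bound $\Phi_s(t)\leq\widetilde V_q(K_{f(o)/2}+tL)/t$ together with strip length $ct^{\alpha+1}$ gives a contribution at most $C't^{\alpha}\to 0$---this is the second place where $\alpha>0$ enters essentially. Sending $t\to 0^+$ first and then $\delta\to 0$ concludes the proof.
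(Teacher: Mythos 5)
Your argument is correct in substance and shares the paper's skeleton: the layer-cake reduction to $\int_0^{f(o)}\Phi_s(t)\,ds$ with $K_s=[f\geq s]$, pointwise convergence from Lemma \ref{lemma 122}, the inradius bound $r_{K_s}\geq ((f(o)-s)/M)^{1/(\alpha+1)}$ extracted from \eqref{eq 400}, and dominated convergence, with the bulk region $s\leq f(o)-\delta$ handled exactly as in the paper (Corollary \ref{corollary 11} plus $\int_0^{f(o)}\widetilde V_q(K_s)\,ds=\widetilde V_q(f)<\infty$). Where you genuinely diverge is the delicate region near $s=f(o)$: the paper sidesteps your ``main technical obstacle'' by applying the mean value theorem to $t\mapsto \widetilde V_q(K_s+tL)$, whose derivative Lemma \ref{lemma 122} provides, and bounding the derivative at the intermediate point by $\max_{L}|x|\cdot c_0^{-1}(f(o)-s)^{-1/(1+\alpha)}\,\widetilde V_q(c_1B+L)$; this majorant is valid uniformly for all small $t>0$, so one application of dominated convergence on $(f(o)-\varepsilon_0,f(o))$ suffices and no boundary strip ever appears. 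Your alternative --- the Corollary \ref{corollary 11} envelope, valid only for $t<\delta_0(s)$, combined with the three-piece split and the $O(t^{\alpha})$ estimate on the final strip --- also works and nicely displays the two places where $\alpha>0$ enters, but it is more laborious and two details need tightening: (i) the cut $c=M(\max_{x\in L}|x|)^{\alpha+1}$ must be enlarged by a fixed factor keyed to the actual constant in $\delta_0(r_0)$ from Lemma \ref{lemma a11} (with $\delta_0\approx r_0/(2\max_L|x|)$ your choice only yields $\delta_0(s)\geq t/2$, which does not give $t<\delta_0(s)$); and (ii) on the middle piece the domain depends on $t$, so the dominated convergence should be phrased for $\Phi_s(t)\,1_{\{s\leq f(o)-ct^{\alpha+1}\}}$ against the fixed majorant $F\in \L^1(f(o)-\delta,f(o))$ --- once stated this way the middle piece already converges to $\int_{f(o)-\delta}^{f(o)}\widetilde V_{1,q}(K_s,L)\,ds$, making your final $\delta\to 0$ step superfluous. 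You also use, implicitly, that $\Phi_s(t)\geq 0$ (from $o\in L$ and monotonicity of $\widetilde V_q$) when discarding the boundary strip; this should be said explicitly, though it is immediate.
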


 \begin{proof}
 	
 	For simplicity, write
 	\begin{equation}
 		K_s = [f\geq s],
 	\end{equation}
 	and $M=f(o)=\max f$.
 	
We first claim that there exist $\varepsilon_0>0$ and $c_0>0$ such that
 	\begin{equation}
 	\label{eq 8818}
 		K_s\supset c_0(M-s)^{\frac{1}{\alpha+1}} B,
 	\end{equation}
 	for any $s\in (M-\varepsilon_0, M)$. Indeed by \eqref{eq 400}, there exist $\Lambda>0$ and $\eta_0>0$ such that for every $x\in B(\eta_0)$, we have
 	\begin{equation}
 		M-f(x)=|f(x)-f(o)|<\Lambda|x|^{\alpha+1},
 	\end{equation}
 	where we used the fact that $M=f(o)=\max f$. Equivalently, this implies that for every $x\in B(\eta_0)$, we have
 	\begin{equation}
 		f(x)>M-\Lambda|x|^{\alpha+1}.
 	\end{equation}
 	A direct computation now shows that if $s\in (0,M)$, then
 	\begin{equation}
 		K_s\supset B\left(\left(\frac{M-s}{\Lambda}\right)^{\frac{1}{\alpha+1}}\right)\cap B(\eta_0).
 	\end{equation}
 We now choose $\varepsilon_0>0$ so that for every $s\in (M-\varepsilon_0, M)$, we have 
\begin{equation}
	B\left(\left(\frac{M-s}{\Lambda}\right)^{\frac{1}{\alpha+1}}\right)\subset B(\eta_0).
\end{equation}
Consequently, we have
\begin{equation}
	K_s\supset B\left(\left(\frac{M-s}{\Lambda}\right)^{\frac{1}{\alpha+1}}\right)=c_0 (M-s)^\frac{1}{\alpha+1}B,
\end{equation}
for some $c_0>0$.
 
 In particular, \eqref{eq 8818} implies, for each $s<M$, that the set $K_s$ contains the origin in the interior. We also require that $\varepsilon_0$ is sufficiently small so that $K_s\subset c_1B$ for each $s\in (M-\varepsilon_0, M)$ and some $c_1>0$. This is possible since $f\in \L^1 \cap \Lc(\rn)$ implies that $\lim_{|x|\rightarrow \infty}f(x)=0$.
 	
 	Note that by layer cake representation, we have
 	\begin{equation}
 		\delta_q (f, 1_L) = \lim_{t\rightarrow 0^+} \int_{0}^M\frac{ \widetilde{V}_q(K_s+tL)-\widetilde{V}_q(K_s)}{t}ds
 	\end{equation}
 	
 	Step 1:
 	\begin{equation}
 		\lim_{t\rightarrow 0^+} \int_{M-\varepsilon_0}^{M}\frac{ \widetilde{V}_q(K_s+tL)-\widetilde{V}_q(K_s)}{t}ds=\int_{M-\varepsilon_0}^{M}\lim_{t\rightarrow 0^+}\frac{ \widetilde{V}_q(K_s+tL)-\widetilde{V}_q(K_s)}{t}ds.
 	\end{equation}
 	In particular, the integral on the right is finite.
 	\begin{proof}[Proof of step 1:]
 		For $t\in (0,1)$ and $s\in (M-\varepsilon_0, M)$, let
 		
 		\begin{equation}
 			g(t;s)=\widetilde{V}_q(K_s+tL).
 		\end{equation}
 		By Lemma \ref{lemma 122}, for each $s\in (M-\varepsilon_0,M)$, the function $g(t;s)$ is differentiable in $t$ and
 		\begin{equation}
 			\frac{\partial}{\partial t} g(t;s) = \int_{\sn} \frac{h_L(v)}{h_{K_s+tL}(v)}d\widetilde{C}_q(K_s+tL,v).
 		\end{equation}
 		Therefore, by the mean value theorem
 		\begin{equation}
 			\frac{g(t;s)-g(0;s)}{t} = \int_{\sn} \frac{h_L(v)}{h_{K_s+\theta L}(v)}d\widetilde{C}_q(K_s+\theta L,v),
 		\end{equation}
 		where $\theta\in [0,t]$ and is dependent on $s$. Since $o\in L$, we have $K_s+\theta L\supset K_s\supset c_0(M-s)^{\frac{1}{1+\alpha}} B$. Since $L$ is compact, for $t\in (0,1)$, we have,
 		\begin{equation}
 		\begin{aligned}
 			\frac{g(t;s)-g(0;s)}{t}&\leq q\max_{L}|x|\cdot \frac{1}{c_0}(M-s)^{-\frac{1}{1+\alpha}}\widetilde{V}_q(K_s+L)\\
 			&\leq q\max_{L}|x|\cdot\frac{1}{c_0}(M-s)^{-\frac{1}{1+\alpha}}\widetilde{V}_q(c_1B+L)\\
 			&\leq C(M-s)^{-\frac{1}{1+\alpha}},
 		\end{aligned}	
 		\end{equation}
 		for some $C>0$.
 		
 		Note that $(M-s)^{-\frac{1}{1+\alpha}}$ is integrable near $M$ thanks to $\alpha>0$. Therefore, by the dominated convergence theorem, we get the desired result.
 	\end{proof}
 	Step 2:
 	\begin{equation}
 		\lim_{t\rightarrow 0^+} \int_0^{M-\varepsilon_0}\frac{ \widetilde{V}_q(K_s+tL)-\widetilde{V}_q(K_s)}{t}ds=\int_0^{M-\varepsilon_0}\lim_{t\rightarrow 0^+}\frac{ \widetilde{V}_q(K_s+tL)-\widetilde{V}_q(K_s)}{t}ds.
 	\end{equation}
 	In particular, the integral on the right is finite.

 	\begin{proof}[Proof to step 2:]
		For each $s\in (0,M-\varepsilon_0)$, there exists $\lambda_0>0$ such that
		\begin{equation}
			K_s\supset K_{M-\varepsilon_0}\supset \lambda_0B.
		\end{equation} 	
		
	By Corollary \ref{corollary 11}, for $t\in (0,\delta_0)$ (where $\delta_0$ is from Corollary \ref{corollary 11}),
	\begin{equation}
		\begin{aligned}
			\frac{ \widetilde{V}_q(K_s+tL)-\widetilde{V}_q(K_s)}{t}= \frac{1}{q}\int_{\sn} \frac{\rho^q_{K_s+tL}-\rho_{K_s}^q}{t}du\leq 2^qC\int_{\sn} \rho_{K_s}^qdu.
		\end{aligned}
	\end{equation}
	Here the constant $C$ is from Corollary \ref{corollary 11}. In particular, $C$ and $\delta_0$ are not dependent on $s$.
	
	Note now that
	\begin{equation}
		\frac{1}{q}\int_{0}^{M-\varepsilon_0}\int_{\sn} \rho_{K_s}^qdu ds= \int_{0}^{M-\varepsilon_0}\widetilde{V}_q(K_s)ds\leq \int_{0}^M\widetilde{V}_q(K_s)ds = \int_{\rn}|x|^{q-n}f(x)dx<\infty.
	\end{equation}
	
	Therefore, we may use the dominated convergence theorem to justify the exchange of limit and integration.
 	\end{proof} 	
 	
 	By Step 1 and Step 2, we have
 	\begin{equation}
 		\begin{aligned}
 			\delta_q (f, 1_L) &=\int_{0}^M \lim_{t\rightarrow 0^+}  \frac{ \widetilde{V}_q(K_s+tL)-\widetilde{V}_q(K_s)}{t}ds\\
 			&=\int_{0}^M \widetilde{V}_{1,q}([f\geq s],L)ds\\
 			&=\int_0^\infty \widetilde{V}_{1,q}([f\geq s],L)ds.
 		\end{aligned}
 	\end{equation}
 	
 	Note that it is included in Steps 1 and 2 that the right-hand side is finite.
 \end{proof}

 We first show the validity of Theorem \ref{theorem intro 1} in the special case that $g$ is a constant multiple of a characteristic function.
 \begin{theorem}
 \label{theorem 11}
 		Let $q>0$, $f\in \Lc(\rn)$ with non-zero finite $\L^1$ norm and $L$ be a compact convex subset of $\rn$ with $o\in L$. Assume $f$ achieves its maximum at $o$ and
 		\begin{equation}
 		\label{eq 400aa}
 			\limsup_{x\rightarrow o} \frac{|f(x)-f(o)|}{|x|^{\alpha+1}}<\infty,
 		\end{equation}
 		for some $0<\alpha<1$.
 		
 		Let $g=c1_L=e^{-\psi}$ for some $c>0$. Then,
 		\begin{equation}
 			\delta_q(f,g) = \int_\rn \psi^*(y)d\ac(f;y)+ \int_{\sn} h_L(v)d\widetilde{C}_q^s(f;v).
 		\end{equation}
 \end{theorem}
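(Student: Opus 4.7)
The strategy is to reduce the statement to Theorem \ref{theorem delta layer cake} by extracting the multiplicative constant $c$ from $g = c\cdot 1_L$, and then to decompose the resulting $(L,\omega_q)$-anisotropic weighted total variation of $f$ into its absolutely continuous and singular parts via the BV structure \eqref{eq 32} of log-concave $\L^1$ functions.

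First I would observe that since $\psi = -\log c + I_L$ (with $I_L$ the convex indicator of $L$), one has $\psi^* = \log c + h_L$, and therefore $(\phi^* + t\psi^*)^* = -t\log c + (\phi^* + th_L)^*$, which yields the factorization $f \oplus t\cdot g = c^t\,(f \oplus t\cdot 1_L)$. Routine algebra then gives
\[
\frac{\widetilde V_q(f \oplus t\cdot g) - \widetilde V_q(f)}{t} = c^t\cdot \frac{\widetilde V_q(f \oplus t\cdot 1_L) - \widetilde V_q(f)}{t} + \frac{c^t - 1}{t}\,\widetilde V_q(f).
\]
Taking $t \to 0^+$ and invoking Theorem \ref{theorem delta layer cake} (whose proof goes through verbatim for general $L \in \mathcal{K}^n$, not merely for $L \in \mathcal K_o^n$) identifies the limit as
\[
\delta_q(f, g) = \delta_q(f, 1_L) + (\log c)\,\widetilde V_q(f) = \int_0^\infty \widetilde V_{1,q}(K_s, L)\,ds + (\log c)\,\widetilde V_q(f),
\]
where $K_s = [f \geq s]$.

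Next, by \eqref{eq 31} each $\widetilde V_{1,q}(K_s, L) = \Per_{L,\omega_q}(\partial K_s)$, so the coarea formula \eqref{eq 26} converts the layer-cake integral into the full $(L,\omega_q)$-anisotropic weighted total variation:
\[
\int_0^\infty \widetilde V_{1,q}(K_s, L)\,ds = TV_{L,\omega_q}(f;\rn) = \int_\rn |x|^{q-n}\, d\|\shortminus Df\|_{L^*}.
\]
Using \eqref{eq 32} to split $[[Df]]$ into its absolutely continuous part $\nabla f\,dx = -f\nabla \phi\, dx$ on $K_f^\circ$ and its singular part $-f\nu_{K_f}\, d\mathcal H^{n-1}|_{\partial K_f}$, and invoking the $1$-homogeneity of $h_L$, this equals
\[
\int_{K_f^\circ} h_L(\nabla \phi(x))\, f(x)\,|x|^{q-n}\, dx + \int_{\partial K_f} h_L(\nu_{K_f}(x))\, f(x)\,|x|^{q-n}\, d\mathcal H^{n-1}(x).
\]
Pushing forward by $\nabla \phi$ (defined $\L^n$-a.e.\ on $K_f^\circ$ by convexity of $\phi$) and by $\nu_{K_f}$ respectively, Definitions \ref{def intro 1} and \ref{def intro 2} identify these two pieces as $\int_\rn h_L(y)\,d\ac(f; y)$ and $\int_\sn h_L(v)\,d\widetilde C_q^s(f; v)$.

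Finally, since $\ac(f; \rn) = \widetilde V_q(f)$ and $\psi^* = \log c + h_L$,
\[
\int_\rn \psi^*(y)\, d\ac(f; y) = (\log c)\,\widetilde V_q(f) + \int_\rn h_L(y)\,d\ac(f; y),
\]
and combining with the previous displays yields the desired identity. The main technical obstacle is step two: legitimizing the coarea formula \eqref{eq 26} requires the integrability in $s$ of $\Per_{L,\omega_q}(\partial K_s)$, which is however already implicit in the finiteness output of Theorem \ref{theorem delta layer cake}. A secondary subtlety is ensuring that $h_L$ is integrable against $\ac(f;\cdot)$ when $o \notin L$ so that $h_L$ may take negative values, but this too is controlled by Theorem \ref{theorem delta layer cake} together with the elementary lower bound $h_L(v) \geq -\max_{x\in L}|x|$, which makes the negative part uniformly bounded against the finite measure $\ac(f;\cdot)$.
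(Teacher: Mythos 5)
Your proposal follows the same fundamental route as the paper: reduce to the layer-cake formula of Theorem \ref{theorem delta layer cake}, apply the coarea formula \eqref{eq 26} to identify the layer-cake integral as $TV_{L,\omega_q}(f;\rn)$, and split via the BV decomposition \eqref{eq 32}. Your explicit factorization $f\oplus t\cdot(c\,1_L) = c^t\,(f\oplus t\cdot 1_L)$ is a clean way to justify what the paper states only as ``WLOG $c = 1$''.

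There is, however, a real gap in your handling of $L$ not containing the origin. You assert that the proof of Theorem \ref{theorem delta layer cake} ``goes through verbatim for general $L\in\mathcal{K}^n$''. It does not: Step 1 of that proof produces the dominating function via
\begin{equation}
\frac{g(t;s)-g(0;s)}{t}=\int_\sn\frac{h_L(v)}{h_{K_s+\theta L}(v)}\,d\widetilde{C}_q(K_s+\theta L,v)\le \frac{\max_L|x|}{c_0}(M-s)^{-\frac1{1+\alpha}}\widetilde{V}_q(c_1B+L),
\end{equation}
which uses the inclusion $K_s+\theta L\supset K_s\supset c_0(M-s)^{1/(1+\alpha)}B$ (valid because $o\in L$) to get a uniform positive lower bound on $h_{K_s+\theta L}$, and also uses $h_L\ge 0$ (again $o\in L$) so that this one-sided estimate suffices for dominated convergence. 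When $o\notin L$, for $s$ close to $M$ and $\theta\in(0,t]$ the body $K_s+\theta L$ may fail to contain the origin at all, so even the displayed integral representation breaks down. The paper avoids this by first proving the identity only for $L\in\mathcal{K}_o^n$ and then extending to general $L\in\mathcal{K}^n$ via linearity in $h_L$ of all terms in the chain (every $h_L$ with $L\in\mathcal{K}^n$ is a difference $h_{L_1}-h_{L_2}$ with $L_1,L_2\in\mathcal{K}_o^n$). You need a similar device rather than the unrestricted use of Theorem \ref{theorem delta layer cake}.

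Your ``secondary subtlety'' remark is also imprecise: the bound $h_L(v)\ge-\max_{x\in L}|x|$ holds for $v\in\sn$, but $\ac(f;\cdot)$ lives on $\rn$, where the $1$-homogeneous $h_L$ obeys only $h_L(y)\ge-\max_{x\in L}|x|\cdot|y|$; integrability of the negative part against $\ac(f;\cdot)$ therefore requires the finiteness of its first moment (cf.\ Theorem \ref{thm necessity}), not a uniform bound. The linearity extension sidesteps this too, since all integrals appearing in it are differences of integrals of nonnegative $h_{L_i}$.
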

 \begin{proof}
 We first restrict ourselves to the case where $L\in \mathcal{K}_o^n$.
 
 	We may assume without loss of generality that $c=1$. In this case, by \eqref{eq 8820}, $\psi^*=h_L$.
 	
 	According to Theorem \ref{theorem delta layer cake}, \eqref{eq 31}, and \eqref{eq 26},  we have
 	\begin{equation}
 		\begin{aligned}
 			\infty>\delta_q (f, 1_L)&= \int_{0}^\infty \widetilde{V}_{1,q}([f\geq s],L)\,ds\\
 			&=\int_{0}^{\infty} \Per_{L,\omega_q}(\partial [f\geq s])ds\\
 			&=TV_{L,\omega_q}(f;\rn).
 		\end{aligned}
 	\end{equation}
 	By \eqref{eq 21} and \eqref{eq 32}, we have
 	\begin{equation}
 	\label{eq 8821}
 	\begin{aligned}
 		\delta_q (f, 1_L)=TV_{L,\omega_q}(f;\rn) &= \int_\rn h_L(\nabla \phi)f(x)|x|^{q-n}dx+ \int_{\partial K_f}h_L(\nu_{K_f})f(x)|x|^{q-n}d\mathcal{H}^{n-1}(x)\\
 		&=\int_\rn h_L(y)d\ac(f;y)+\int_\sn h_L(v)d\widetilde{C}_q^s(f;v).
 	\end{aligned}
 	\end{equation}
 	Here, the last equality follows straightly from the definition of $\ac$ and $\widetilde{C}_q^s$. See Definitions \ref{def intro 1} and \ref{def intro 2}.
 	
 	To see that the result still holds when $L$ is a compact convex set with $o\in L$, consider the body $L'=L+B\in \mathcal{K}_o^n$. Then, by using \eqref{eq 8821} twice, we have
 	\begin{equation}
 	\label{eq 8830}
 		\begin{aligned}
 			\delta_q (f, 1_{L'})&=\int_\rn h_{L'}(y)d\ac(f;y)+\int_\sn h_{L'}(v)d\widetilde{C}_q^s(f;v)\\
 			&=\int_\rn h_{L}(y)d\ac(f;y)+\int_\sn h_{L}(v)d\widetilde{C}_q^s(f;v)\\
 			& \phantom{qweqwe}+\int_\rn h_{B}(y)d\ac(f;y)+\int_\sn h_{B}(v)d\widetilde{C}_q^s(f;v)\\
 			&= \int_\rn h_{L}(y)d\ac(f;y)+\int_\sn h_{L}(v)d\widetilde{C}_q^s(f;v)+\delta_{q}(f,1_B).
 		\end{aligned}
 	\end{equation}
 	On the other hand, note that $\widetilde{V}_{1,q}(K,L)$ is linear in $L$ with respect to the Minkowski addition. Therefore, by Theorem \ref{theorem delta layer cake}, we have
 	\begin{equation}
 	\label{eq 8831}
 		\delta_q(f,1_{L'}) = \delta_q(f,1_L)+\delta_{q}(f, 1_B).
 	\end{equation}
 	The desired result now follows from combining \eqref{eq 8830} and \eqref{eq 8831}.
\end{proof}

We need the following lemma from \cite{https://doi.org/10.48550/arxiv.2206.13146}.
\begin{lemma}
  \label{lemma pointwise limit}
  	Let $f=e^{-\phi}, g=e^{-\psi}\in \Lc(\rn)$ be such that $f$ has nonzero finite $\L^1$ norm and $g$ is compactly supported. Then for almost all $x\in \rn$, we have
  	\begin{equation}
  		\lim_{t\rightarrow 0^+} \frac{(f\oplus(t\cdot g))(x)-f(x)}{t} = \psi^*(\nabla \phi(x))f(x).
  	\end{equation}
  \end{lemma}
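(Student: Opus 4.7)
The plan is to work with the convex potentials. Set $\phi_t = (\phi^* + t\psi^*)^*$, so that $f \oplus t\cdot g = e^{-\phi_t}$. Starting from the equivalent formulation $(f \oplus t\cdot g)(x) = \sup_{v}\{f(x - tv)\,g(v)^t\}$ (obtained by the change of variables $u = x-tv$), one extracts the inf-convolution representation
\begin{equation}
\phi_t(x) = \inf_{v \in \rn}\{\phi(x - tv) + t\psi(v)\}.
\end{equation}
Since $e^{-\phi_t(x)} - e^{-\phi(x)} = f(x)(e^{-(\phi_t(x) - \phi(x))} - 1)$, and (as will be seen) $\phi_t(x) \to \phi(x)$ with $|\phi_t(x) - \phi(x)| \leq Ct$ for small $t$, the elementary expansion $e^{-a} = 1 - a + o(a)$ reduces the claim to proving
\begin{equation}
\lim_{t\to 0^+}\frac{\phi_t(x) - \phi(x)}{t} = -\psi^*(\nabla \phi(x))
\end{equation}
for almost every $x$.

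I would then restrict attention to the full-measure set where $\phi$ is differentiable (standard convex analysis). The lower bound follows at once from the subgradient inequality $\phi(x - tv) \geq \phi(x) - t\langle \nabla\phi(x), v\rangle$: after adding $t\psi(v)$ and infimizing over $v$,
\begin{equation}
\phi_t(x) \geq \phi(x) + t\inf_v\{\psi(v) - \langle \nabla\phi(x),v\rangle\} = \phi(x) - t\psi^*(\nabla\phi(x)),
\end{equation}
uniformly in $t > 0$. For the matching upper bound, I would fix any $v_0$ in the effective domain of $\psi$ (nonempty, and a subset of the compact set $K_g$) and apply the one-sided test-point estimate $\phi_t(x) \leq \phi(x - tv_0) + t\psi(v_0)$. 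Dividing by $t$ and using the directional derivative $(\phi(x - tv_0) - \phi(x))/t \to -\langle\nabla\phi(x), v_0\rangle$ provided by differentiability of $\phi$ at $x$ gives
\begin{equation}
\limsup_{t\to 0^+}\frac{\phi_t(x) - \phi(x)}{t} \leq \psi(v_0) - \langle\nabla\phi(x), v_0\rangle,
\end{equation}
and infimizing over $v_0$ reconstructs $-\psi^*(\nabla\phi(x))$, as desired.

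The compactness of $K_g$ enters only to ensure $\psi^*$ is finite everywhere (so the limit is finite) and to furnish a nonempty effective domain for $\psi$ from which to draw test points $v_0$; the argument itself uses only elementary convex analysis. The main mild technical point is the exchange of $\limsup_{t\to 0^+}$ with the infimum over $v_0$ — this is clean here precisely because the test-point bound is one-sided for each fixed $v_0$, so the infimum can be pulled outside the $\limsup$ without any uniform estimates in $v_0$. Once \eqref{eq plan reduced} is in hand, writing $s_t = \phi_t(x) - \phi(x)$ and using $(e^{-s_t}-1)/t = [(e^{-s_t}-1)/s_t]\cdot(s_t/t) \to (-1)\cdot(-\psi^*(\nabla\phi(x)))$ completes the conversion to the stated limit $f(x)\psi^*(\nabla\phi(x))$.
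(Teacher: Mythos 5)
The paper does not actually prove this lemma: it is imported verbatim from Rotem's work on the anisotropic total variation (the reference cited right before the statement), so there is no in-paper proof to compare yours against. Your argument is a correct, self-contained proof by exactly the kind of elementary convex analysis one would expect: the inf-convolution lower bound via the subgradient inequality at $\nabla\phi(x)$, the one-sided test-point upper bound with $v_0\in\operatorname{dom}\psi$ followed by infimizing over $v_0$ (legitimate, as you note, since the bound is one-sided for each fixed $v_0$), and then the elementary exponential expansion using $|\phi_t(x)-\phi(x)|=O(t)$.

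Two small points to tighten. First, the paper \emph{defines} $f\oplus t\cdot g$ as $e^{-(\phi^*+t\psi^*)^*}$, so your passage to the representation $\phi_t(x)=\inf_v\{\phi(x-tv)+t\psi(v)\}$ uses exactness of the conjugate-of-a-sum (no closure operation); this does hold here because $\operatorname{dom}(t\psi^*)=\rn$ (compact support of $g$ makes $\psi^*$ finite everywhere), but it deserves a line — or can be bypassed entirely by proving both inequalities straight from $\phi_t=(\phi^*+t\psi^*)^*$: for the lower bound test the supremum at $y=\nabla\phi(x)$ and use Fenchel's identity $\langle x,\nabla\phi(x)\rangle-\phi^*(\nabla\phi(x))=\phi(x)$; for the upper bound split $\langle x,y\rangle=\langle x-tv_0,y\rangle+t\langle v_0,y\rangle$ and use $\phi^{**}=\phi$, $\psi^{**}=\psi$. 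Second, the set where $\phi$ is differentiable has full measure only in the interior of $K_f$, not in $\rn$, so as written your proof covers almost every point of the support but not the complement; to get ``almost all $x\in\rn$'' add the trivial observation that for $x$ outside the closed set $K_f$ one has, for all small $t>0$, that $f\oplus t\cdot g$ vanishes at $x$ (its support lies in $K_f+tK_g$), so the difference quotient is $0$, matching the convention that $\psi^*(\nabla\phi)f=0$ where $f=0$; the remaining set $\partial K_f$ is Lebesgue-null. With these two remarks your proof is complete.
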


We are now ready to prove the promised variational formula; that is, compute the limit in \eqref{eq 33}.
\begin{theorem}
\label{theorem 31}
 		Let $q>0$ and $f=e^{-\phi}\in \Lc(\rn)$ with non-zero finite $\L^1$ norm. Assume $f$ achieves its maximum at $o$ and
 		\begin{equation}
 		\label{eq 400bb}
 			\limsup_{x\rightarrow o} \frac{|f(x)-f(o)|}{|x|^{\alpha+1}}<\infty,
 		\end{equation}
 		for some $0<\alpha<1$. Let $g=e^{-\psi}\in \Lc(\rn)$ be compactly supported and $g(o)>0$. Then,
 		\begin{equation}
 		\label{eq 40}
 			\delta_q(f,g) = \int_\rn \psi^*(y)d\ac(f;y)+ \int_{\sn} h_{K_g}(v)d\widetilde{C}_q^s(f;v).
 		\end{equation}
 \end{theorem}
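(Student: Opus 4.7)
My plan is to prove Theorem \ref{theorem 31} via a two-sided squeeze argument that reduces the case of a general log-concave $g$ to the characteristic-function case already handled by Theorem \ref{theorem 11}. The pointwise limit in Lemma \ref{lemma pointwise limit} combined with Fatou's lemma closes the gap between the upper and lower comparators. Throughout, I will use the consistent scaling behavior of both sides of \eqref{eq 40} under $g \mapsto g/c$ (which shifts both sides by $-\log c \cdot \widetilde{V}_q(f)$) to assume without loss of generality that $\|g\|_\infty = 1$; then $\psi \geq 0$ and hence $\psi^* \leq h_{K_g}$ on $\rn$.

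For the upper bound I take $g_1 = 1_{K_g}$. Since $g \leq g_1$ pointwise, sup-convolution is monotone, and
\begin{equation*}
R(t,x) = \frac{(f \oplus t\cdot g_1)(x) - (f \oplus t\cdot g)(x)}{t}
\end{equation*}
is non-negative. Lemma \ref{lemma pointwise limit}, applied separately to $g$ (whose Legendre--Fenchel conjugate is $\psi^*$) and to $g_1$ (whose conjugate is $h_{K_g}$), gives $R(t,x) \to (h_{K_g}(\nabla\phi(x)) - \psi^*(\nabla\phi(x))) f(x)$ for almost every $x$. Applying Fatou's lemma to $|x|^{q-n} R(t,x)$ and using Theorem \ref{theorem 11} on $g_1$ to evaluate $\lim_{t \to 0^+}\frac{\widetilde{V}_q(f\oplus t\cdot g_1) - \widetilde{V}_q(f)}{t} = \int h_{K_g}\,d\ac(f;\cdot)+\int h_{K_g}\,d\widetilde{C}^s_q(f;\cdot)$, I obtain
\begin{equation*}
\limsup_{t\to 0^+} \frac{\widetilde{V}_q(f\oplus t\cdot g) - \widetilde{V}_q(f)}{t} \leq \int_\rn \psi^*(y)\,d\ac(f;y) + \int_\sn h_{K_g}(v)\,d\widetilde{C}^s_q(f;v).
\end{equation*}

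For the matching lower bound I approximate $g$ from below by constant multiples of characteristic functions of its sublevel sets. For each $\tau > 0$ the set $L_\tau = \{\psi \leq \tau\}$ is a compact convex body with nonempty interior, and $g^\tau := e^{-\tau}\mathbf{1}_{L_\tau} \leq g$. The same reasoning applied to $S(t,x) = \frac{(f\oplus t\cdot g)(x) - (f\oplus t\cdot g^\tau)(x)}{t} \geq 0$, which by Lemma \ref{lemma pointwise limit} converges a.e.\ to $(\psi^* - h_{L_\tau} + \tau)(\nabla\phi(x)) f(x)$, together with Theorem \ref{theorem 11} applied to $g^\tau$ (whose conjugate is $h_{L_\tau} - \tau$), yields $\liminf_{t \to 0^+}\frac{\widetilde{V}_q(f\oplus t\cdot g) - \widetilde{V}_q(f)}{t} \geq \int \psi^*\,d\ac + \int h_{L_\tau}\,d\widetilde{C}^s_q$. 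As $\tau \to \infty$ the sets $L_\tau$ increase to $\interior K_g$, so $h_{L_\tau} \uparrow h_{K_g}$ pointwise on $\sn$, and monotone convergence closes the gap with the upper bound.

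The main technical obstacle is verifying the finiteness of the integrals needed for the Fatou-and-subtract manipulations. Theorem \ref{theorem 11} applied with $L = K_g$ gives $\int h_{K_g}\,d\ac(f;\cdot) < \infty$ and that $\widetilde{C}^s_q(f;\cdot)$ is a finite measure on $\sn$. The bound $\psi^* \leq h_{K_g}$ controls the positive part of $\psi^*$, while the elementary lower bound $\psi^*(y) \geq \langle x^*, y\rangle \geq -\operatorname{diam}(K_g)|y|$ (taking any minimizer $x^* \in K_g$ of $\psi$), combined with Theorem \ref{theorem 11} applied to the unit ball to conclude $\int |y|\,d\ac(f;y) < \infty$, forces $\int |\psi^*|\,d\ac(f;\cdot) < \infty$. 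A minor point is that Fatou's lemma is sequential, which is handled by extracting arbitrary sequences $t_k \to 0^+$ and invoking the standard characterization of $\limsup$ and $\liminf$.
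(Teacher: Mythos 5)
Your proof is correct and follows essentially the same route as the paper's: squeeze the general log-concave $g$ between $\|g\|_\infty \mathbf{1}_{K_g}$ from above and $e^{-\tau}\mathbf{1}_{\{\psi\le\tau\}}$ from below (the paper's $\frac{1}{j}\mathbf{1}_{Q_j}$ is your $g^\tau$ with $\tau=\ln j$), combining Lemma~\ref{lemma pointwise limit}, Fatou on the nonnegative difference quotient, and Theorem~\ref{theorem 11} on the characteristic-function comparator, then closing with monotone convergence. The only cosmetic differences are your upfront normalization $\|g\|_\infty = 1$ (the paper instead carries the $\ln A$ and $\ln j$ terms through, which cancel) and your slightly more explicit bookkeeping of the finiteness of $\int|\psi^*|\,d\ac$, neither of which changes the substance of the argument.
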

 \begin{proof}
 	Let $f_t = f\oplus(t\cdot g)$. Since $g$ is compactly supported, there exists $A>0$ such that $g\leq A 1_{K_g}$ and therefore $f_t\leq \widetilde{f}_t= f\oplus(t\cdot A1_{K_g})$.
 	
 	By Lemma \ref{lemma pointwise limit},
 	\begin{equation}
 		\lim_{t\rightarrow 0^+} \frac{\widetilde{f}_t-f_t}{t} = \lim_{t\rightarrow 0^+} \frac{\widetilde{f}_t-f}{t}-\lim_{t\rightarrow 0^+} \frac{{f}_t-f}{t} = h_{K_g}(\nabla \phi)f+\ln A f-\psi^*(\nabla \phi)f.
 	\end{equation}
 	
 	By Fatou's lemma, we have
 	\begin{equation}
 	\begin{aligned}
 		\liminf_{t\rightarrow 0^+}\int_\rn \frac{\widetilde{f}_t-f_t}{t}|x|^{q-n} dx&\geq \int_\rn [h_{K_g}(\nabla \phi)f+\ln A f-\psi^*(\nabla \phi)f ]\cdot |x|^{q-n} dx\\
 		&=\int_\rn (h_{K_g}(y)-\psi^*(y))d\ac(f;y)+\ln A \cdot \widetilde{V}_q(f)
 	\end{aligned}
 	\end{equation}
 	
 	Since $g(o)>0$, we have $o\in K_g$. By Theorem \ref{theorem 11}, we have
 	\begin{equation}
 	\begin{aligned}
 			&\lim_{t\rightarrow 0^+}\int \frac{\widetilde{f}_t-f}{t}|x|^{q-n} dx \\
 			&= \int_\rn h_{K_g}(y)d\ac(f;y)+\ln A \cdot \widetilde{V}_q(f)+ \int_{\sn} h_{K_g}(v)d\widetilde{C}_q^s(f;v).
 	\end{aligned}
 	\end{equation}
 	Combining the above two formulas, we have
 	\begin{equation}
 		\limsup_{t\rightarrow 0^+}\int \frac{f_t-f}{t}|x|^{q-n}dx\leq \int_{\rn} \psi^*(y)d\ac(f;y)+\int_{\sn} h_{K_g}(v)d\widetilde{C}_q^s(f;v).
 	\end{equation}
 	
 	For the other direction of the inequality, define for each positive integer $j$, the set
 	\begin{equation}
 		Q_j = \left\{x\in \rn: g(x)\geq \frac{1}{j}\right\}.
 	\end{equation}
 	Since $g$ is compactly supported, we conclude that $Q_j\uparrow K_g$. Since $g(o)>0$, for sufficiently large $j$, we have $o\in Q_j$. We focus on such $j$.
 	
 	Let
 	\begin{equation}
 		\bar{g}_j = \frac{1}{j}1_{Q_j}, \quad \text{and } \bar{f}_{j,t} = f\oplus t\cdot \bar{g}_j.
 	\end{equation}
 	Note that $\bar{f}_{j,t}\leq f_t$. Arguing the same way as before, we have
 	\begin{equation}
 		\liminf_{t\rightarrow 0^+}\int \frac{f_t-\bar{f}_{j,t}}{t}|x|^{q-n} dx\geq \int_\rn (\psi^*(y)-h_{Q_j}(y))\,d\ac(f;y)+\ln j \cdot \widetilde{V}_q(f),
 	\end{equation}
 	and
 	\begin{equation}
 	\begin{aligned}
 			&\lim_{t\rightarrow 0^+}\int \frac{\bar{f}_{j,t}-f}{t}|x|^{q-n} dx \\
 			&= \int_\rn h_{Q_j}(y)d\ac(f;y)-\ln j \cdot \widetilde{V}_q(f)+ \int_{\sn} h_{Q_j}(v)d\widetilde{C}_q^s(f;v).
 	\end{aligned}
 	\end{equation}
 	Adding the above two formulas, we have
 	\begin{equation}
 		\liminf_{t\rightarrow 0^+}\int \frac{f_t-f}{t}|x|^{q-n}dx\geq \int_\rn \psi^*(y)\,d\ac(f;y)+ \int_{\sn} h_{Q_j}(v)d\widetilde{C}_q^s(f;v).
 	\end{equation}
 	Letting $j\rightarrow \infty$, by the monotone convergence theorem, we have
 	\begin{equation}
 		\liminf_{t\rightarrow 0^+}\int \frac{f_t-f}{t}|x|^{q-n}dx\geq \int_\rn \psi^*(y)\,d\ac(f;y)+ \int_{\sn} h_{K_g}(v)d\widetilde{C}_q^s(f;v).
 	\end{equation}
 	This completes the proof.
 \end{proof}

 {
 We remark here that the hypothesis \eqref{eq 400} in Theorem \ref{theorem delta layer cake} (and consequently \eqref{eq 400bb} in Theorem \ref{theorem 31}) is not the best hypothesis. In particular, we show that if the level sets of $f$ near the origin are \emph{uniformly} in ``good shape'', then Theorem \ref{theorem delta layer cake} still holds when $L\in \mathcal{K}_o^n$.

For $K\in \mathcal{K}_o^n$, define
\begin{equation}
	r_K = \max\{r\geq 0: rB\subset K\}
\end{equation}
and
\begin{equation}
	R_K = \min\{r\geq 0: K\subset rB\}.
\end{equation}
\begin{prop}
\label{prop 8221}
	Let $f\in \Lc(\rn)$ with non-zero finite $\L^1$ norm and $L\in \mathcal{K}_o^n$. Assume $f$ achieves its maximum at $o$. If there exist $\varepsilon_0>0$ and $C>0$ such that for almost all  $f(o)-\varepsilon_0<s<f(o)$, we have
	\begin{equation}
	\label{eq 8222}
		1\leq \frac{R_{[f\geq s]}}{r_{[f\geq s]}}<C,
	\end{equation}
	 	then
 	\begin{equation}
 		\delta_q (f, 1_L)= \int_{0}^\infty \widetilde{V}_{1,q}([f\geq s],L)\,ds<\infty.
 	\end{equation}
\end{prop}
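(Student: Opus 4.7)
The plan is to follow the structure of the proof of Theorem \ref{theorem delta layer cake} and modify only the analysis on $(M-\varepsilon_0, M)$ (Step 1 there), since Step 2 uses only that $K_{M-\varepsilon_0}\in\ko$, which is still available under \eqref{eq 8222}. In the original Step 1, hypothesis \eqref{eq 400} produced the pointwise bound $r_{K_s}\gtrsim (M-s)^{1/(1+\alpha)}$ with $\alpha>0$, which was used to dominate
\[
\phi_t(s):=\frac{\widetilde{V}_q(K_s+tL)-\widetilde{V}_q(K_s)}{t}
\]
by $C/r_{K_s}\lesssim (M-s)^{-1/(1+\alpha)}$, integrable precisely because $\alpha>0$. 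Under only \eqref{eq 8222}, the log-concavity of $f$ yields just a \emph{linear} lower bound $r_{K_s}\gtrsim M-s$, and $C/r_{K_s}$ is no longer integrable. The fix is to exploit the bounded aspect ratio in \eqref{eq 8222} to replace this dominator by a sharper one of order $r_{K_s}^{q-1}$.

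First, I would establish the linear lower bound on $r_{K_s}$. Let $\psi=-\log f-\log M\geq 0$ and $\omega(r):=\sup_{|x|=r}\psi(x)$. Since $\psi$ is convex with $\psi(o)=0$, the map $t\mapsto\psi(tx)/t$ is non-decreasing, which forces $\omega(r)/r$ to be non-decreasing in $r>0$. Condition \eqref{eq 8222} gives $o\in\interior(K_f)$, so $\omega$ is finite in a neighborhood of $0$; hence $\omega(r)\leq c_1 r$ for small $r$. Inverting yields $r_{K_s}\geq c_0\log(M/s)\geq c_0'(M-s)$ for $s$ close to $M$.

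Second, I would derive the refined pointwise bound on $\phi_t(s)$ in two regimes. If $t\leq r_{K_s}$, then $K_s+\theta L$ inherits a bounded aspect ratio for every $\theta\in[0,t]$, namely $R_{K_s+\theta L}/r_{K_s+\theta L}\leq (Cr_{K_s}+tR_L)/r_{K_s}\leq C+R_L$. Combined with Lemma \ref{lemma 122}, the mean value theorem, and the identity $\widetilde{C}_q(K,\sn)=q\widetilde{V}_q(K)$, this gives
\[
\phi_t(s)=\widetilde{V}_{1,q}(K_s+\theta L,L)\leq \frac{\max_v h_L(v)}{r_{K_s}}\cdot q\,\widetilde{V}_q(K_s+\theta L)\leq C_2\, r_{K_s}^{q-1},
\]
uniformly in such $t$. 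When $t>r_{K_s}$, the crude inclusion $K_s+tL\subset(C+R_L)tB$ gives $\phi_t(s)\leq \widetilde{V}_q(K_s+tL)/t\leq C_3 t^{q-1}$.

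Finally, I would split $(M-\varepsilon_0,M)=A_t\cup B_t$ with $A_t=\{s:r_{K_s}\geq t\}$. The linear lower bound forces $|B_t|\leq t/c_0'$, so $\int_{B_t}\phi_t(s)\,ds\leq C_4 t^q\to 0$. On $A_t$ I apply dominated convergence with dominator $C_2 r_{K_s}^{q-1}$: this is bounded for $q\geq 1$, and for $q\in(0,1)$ is majorized by $(c_0')^{q-1}(M-s)^{q-1}$, integrable since $q>0$. Since $\phi_t(s)\to\widetilde{V}_{1,q}(K_s,L)$ pointwise and $A_t\uparrow (M-\varepsilon_0,M)$ as $t\to 0^+$, we conclude $\int_{M-\varepsilon_0}^M\phi_t(s)\,ds\to\int_{M-\varepsilon_0}^M\widetilde{V}_{1,q}(K_s,L)\,ds$, and combining with the unchanged Step 2 gives the identity and its finiteness. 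The main obstacle is the refined bound $\phi_t(s)\lesssim r_{K_s}^{q-1}$ (in place of the original $1/r_{K_s}$), which is possible precisely because the aspect-ratio control \eqref{eq 8222} propagates to $K_s+\theta L$ in the regime $t\leq r_{K_s}$, and combines with the linear lower bound from log-concavity to give an integrable dominator for every $q>0$.
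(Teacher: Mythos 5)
Your argument is correct and follows the same overall strategy as the paper: isolate the range $s\in(M-\varepsilon_0,M)$, obtain a linear (equivalently, logarithmic) lower bound on $r_{K_s}$ near $M$ from convexity, exploit the bounded aspect ratio to dominate the difference quotient by a constant times $r_{K_s}^{q-1}$, and finish with dominated convergence. There are two places where your route is slightly heavier than it needs to be, both stemming from not observing that the aspect-ratio control \eqref{eq 8222} propagates to $K_s+tL$ \emph{uniformly} in $t$ (not just for $t\leq r_{K_s}$): from $R_{K_s}\leq C r_{K_s}$ one gets $R_{K_s+tL}/r_{K_s+tL}\leq C+R_L/r_L$ whether $t\leq r_{K_s}$ or not, by comparing against $r_{K_s}$ in the first case and against $t\,r_L$ in the second. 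With this, your bound $\phi_t(s)\lesssim R_{K_s+\theta L}^q/r_{K_s+\theta L}\lesssim r_{K_s+\theta L}^{q-1}\lesssim r_{K_s}^{q-1}$ (for $q<1$; bounded for $q\geq 1$) holds for all small $t$, and the $A_t$/$B_t$ split and the separate crude estimate on $B_t$ become unnecessary --- a single dominated-convergence argument suffices, which is what the paper does. The second, more cosmetic, difference is that the paper derives the same dominator by converting $\widetilde{C}_q(K_s+\theta L,\cdot)$ to an integral over $\sn$ via polar coordinates and using the normal-direction estimate $\nu\cdot x/|x|\geq c_2$ (a consequence of the bounded aspect ratio), arriving directly at $\int_\sn\rho^{q-1}$; your chain through $\max h_L/r_K$ and $\widetilde{C}_q(K,\sn)=q\widetilde{V}_q(K)$ reaches the same place by a slightly lossier route. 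Finally, a small typo: $\psi$ should be $-\log f+\log M=\phi-\phi(o)$ rather than $-\log f-\log M$, so that $\psi(o)=0$.
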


\begin{proof}
	Denote $f(o)$ by $M$ and $K_s=[f\geq s]$. Like in the proof of Theorem \ref{theorem delta layer cake}, it is sufficient to show
	\begin{equation}
	\label{eq 8221}
 		\lim_{t\rightarrow 0^+} \int_{M-\varepsilon_0}^{M}\frac{ \widetilde{V}_q(K_s+tL)-\widetilde{V}_q(K_s)}{t}ds=\int_{M-\varepsilon_0}^{M}\lim_{t\rightarrow 0^+}\frac{ \widetilde{V}_q(K_s+tL)-\widetilde{V}_q(K_s)}{t}ds,
 	\end{equation}
 	and
 	\begin{equation}
 	\label{eq 8225}
 		\lim_{t\rightarrow 0^+} \int_0^{M-\varepsilon_0}\frac{ \widetilde{V}_q(K_s+tL)-\widetilde{V}_q(K_s)}{t}ds=\int_0^{M-\varepsilon_0}\lim_{t\rightarrow 0^+}\frac{ \widetilde{V}_q(K_s+tL)-\widetilde{V}_q(K_s)}{t}ds.
 	\end{equation}
 	Note that the latter follows in the same way as before. Hence, we only need to justify \eqref{eq 8221}.
 	
 	By \eqref{eq 8222}, it is simple to see that for almost all $s\in (M-\varepsilon_0, M)$ and $t\in (0,1)$, we have
 	\begin{equation}
 		\frac{R_{K_s+tL}}{r_{K_s+tL}}<C_1
 	\end{equation}
 	for some $C_1>0$ independent of $s$ and $t$. Consequently, there exist $c_2>0$ independent of $s$ and $t$ such that
 	\begin{equation}
 	\label{eq 8223}
 		c_2<\nu_{K_s+tL}(x)\cdot \frac{x}{|x|}\leq 1,
 	\end{equation}
 	for almost all $x\in \partial (K_s+tL)$.
 	
 	Let $g(t;s)$ be as defined in the proof of Theorem \ref{theorem delta layer cake}. By the same argument, we have
 	\begin{equation}
 			\frac{g(t;s)-g(0;s)}{t} = \int_{\sn} \frac{h_L(v)}{h_{K_s+\theta L}(v)}d\widetilde{C}_q(K_s+\theta L,v),
 		\end{equation}
 		where $\theta\in [0,t]$ and is dependent on $s$. By the fact that $L\in \mathcal{K}_o^n$ and \eqref{eq 8223}, there exists $C_3>0$ independent of $t$ and $s$ such that
 		\begin{equation}
 		\label{eq 891}
 			\frac{g(t;s)-g(0;s)}{t}\leq C_3 \int_{\sn} \rho_{K_s+\theta L}^{q-1}(u)du.
 		\end{equation}
 		
 		When $q\geq 1$, by \eqref{eq 891} and the fact that $K_s+\theta L\subset K_s+L\subset K_{M-\varepsilon_0}+L$ whenever $t\in (0,1)$ and $s\in (M-\varepsilon_0, M)$, we have
 		\begin{equation}
 			\frac{g(t;s)-g(0;s)}{t}\leq C_3 \int_{\sn} \rho_{K_{M-\varepsilon_0}+L}^{q-1}(u)du\leq C_4,
 		\end{equation}
 		for some $C_4>0$. Equation \eqref{eq 8221} then follows from the bounded convergence theorem.
 		
 		Let us now concentrate on the case $q\in (0,1)$. By the fact that $f$ is positive in a neighborhood of the origin, there exists $c_5>0$ such that
 		\begin{equation}
 			f(x)\geq M/2,
 		\end{equation} 	
 		for all $|x|\leq c_5$. By log-concavity of $f$, we have
 		\begin{equation}
 		\begin{aligned}
 			f(y) &= f\left(\left(1-\frac{|y|}{c_5}\right)o+ \frac{|y|}{c_5}\left(\frac{y}{|y|}c_5\right)\right)\\
 			&\geq M^{1-\frac{|y|}{c_5}}\left(\frac{M}{2}\right)^{\frac{|y|}{c_5}}\\
 			&= M \cdot 2^{-\frac{|y|}{c_5}},
 		\end{aligned}		
 		\end{equation}	
 		for every $0<|y|\leq c_5$. Consequently, there exist $\delta_0>0$ and $C_6>0$  such that for all $s\in (M-\delta_0,M)$, we have
 		\begin{equation}
 			K_s \supset C_6(\log M-\log s)B.
 		\end{equation}
 		Therefore, \eqref{eq 891} and the fact that $0<q<1$ in combination with $K_s\subset K_s+\theta L$, show that  for each $s\in (M-\delta_0,M)$,
 		\begin{equation}
 			\frac{g(t;s)-g(0;s)}{t}\leq C_3 \int_{\sn}\rho_{K_s}^{q-1}(u)du\leq C_7 (\log M-\log s)^{q-1},
 		\end{equation}
 		for some $C_7>0$. Note that
 		\begin{equation}
 			\int_{M-\delta_0}^M(\log M-\log s)^{q-1}ds<\infty.
 		\end{equation}
 		Hence, by the dominated convergence theorem, we conclude the validity of \eqref{eq 8221} with $\varepsilon_0$ replaced by $\delta_0$. Note that \eqref{eq 8225} holds with $\varepsilon_0$ replaced by $\delta_0$ as well---via the exact same argument. Therefore, we derive the desired result.
 		 \end{proof}
 		
 \begin{remark}
 \label{remark 8101}
 	We remark here that since in the proof of Theorem \ref{theorem 31} we required \eqref{eq 400bb} only for the ability to apply Theorem \ref{theorem 11}, by Proposition \ref{prop 8221}, we conclude that with the additional assumption that the origin is an interior point of the support of $g$, Theorem \ref{theorem 31} continues to hold with \eqref{eq 400bb} replaced by \eqref{eq 8222}. In particular, hypothesis \eqref{eq 8222} allows for log-concave functions such as
 	\begin{equation}
 		f(x)=e^{-\|x\|_K},
 	\end{equation}
 	where $K\in\mathcal{K}^{n}_o$. It is of great interest to see whether assumptions like \eqref{eq 400bb} and \eqref{eq 8222} can be dropped altogether.
 \end{remark}}

A few additional remarks are in order:
\begin{enumerate}
	\item Theorem \ref{theorem 31} justifies why we referred to $\ac(f;\cdot)$ and $\widetilde{C}_q^s(f;\dot)$ as the Euclidean and spherical dual curvature measures for log-concave functions. In particular, when $f$ and $g$ are characteristic functions of convex bodies containing the origin in their respective interiors, \eqref{eq 40} recovers its convex geometric counterpart \eqref{eq 92}.
	\item Since $\phi$ is convex and therefore almost everywhere differentiable in the support of $f$, the measure $\ac(f;\cdot)$ is well-defined. Notice that its total measure is equal to $\widetilde{V}_q(f)$. Therefore, by Proposition \ref{prop 12}, as long as $f$ is in $\L^1$, the measure $\ac(f;\cdot)$ is always finite.
	\item Since $f$ is log-concave, its support $K_f$ is necessarily convex and therefore it makes sense to write $\nu_{K_f}$. However, even with the assumption that $f\in \L^1$, it might not be the case that the measure $\widetilde{C}^{s}_{q}(f;\cdot)$ is finite. To see this, simply take the example where $K_f$ has a facet that contains the origin in the relative interior. In that case, the density $|x|^{q-n}$ will have a non-integrable singularity at the origin in that subspace when $q<1$. This counterexample suggests that we must impose some condition on $f$ such that $f$ ``contains the origin in the interior''. With the additional assumption that $K_f$ contains the origin in its interior, one may show that $\widetilde{C}^{s}_{q}(f;\cdot)$ is always a finite measure.

	\item When $f=1_K$ is the characteristic function of a convex body $K\in \mathcal{K}_o^n$, the Euclidean dual curvature measure $\ac(f;\cdot)$ reduces to a point mass concentrated at the origin  and
	\begin{equation}
		\widetilde{C}^{s}_{q}(f;\cdot) = \widetilde{C}_{1,q}(K,\cdot).
	\end{equation}
	\item Let us emphasize again that although the weight function $\omega_q=|x|^{q-n}$ is the only type of weight functions treated in this section, many results presented here can be shown for other weight functions (with only small changes). In particular, the Gaussian weight function is one of the many weight functions that can be used here to replace $\omega_q$. We mention that in the setting of convex bodies, a variational formula for the Gaussian volume and its corresponding Gaussian Minkowski problem was previously studied by the authors in \cite{MR4252759}.
\end{enumerate}

\section{The Minkowski problem for $\ac$}

The purpose of this section is to study the following Minkowski problem for the Euclidean dual curvature measure of log-concave functions.

We will restrict our attention to the even case---the prescribed measure $\mu$ in  \eqref{eq 96} is even and we are restricting our solution set to all even functions $f\in \Lc(\rn)$.

\subsection{The variational structure} In this subsection, we convert the solvability of \eqref{eq 96} into the existence of a minimizer to a minimization problem.

We recall the following result in \cite[Proposition 2.1]{https://doi.org/10.48550/arxiv.2006.16933}. 

\begin{prop}[\cite{https://doi.org/10.48550/arxiv.2006.16933}] \label{prop 881}
	Let $\phi, \zeta:\rn \rightarrow (-\infty, \infty]$ be lower semi-continuous functions. Assume $\zeta$ is bounded from below and $\zeta(0), \phi(0)<\infty$. Then at every point $x_0\in \rn$ where $\phi^*$ is differentiable, we have
	\begin{equation}
		\left.\frac{\partial}{\partial t}\right|_{t=0^+} (\phi+t\zeta)^*(x_0) = -\zeta(\nabla \phi^*(x_0)).
	\end{equation}
\end{prop}
Note that in \cite[Proposition 2.1]{https://doi.org/10.48550/arxiv.2006.16933}, the derivative is stated as a one-sided derivative. However, for $\zeta\in C_c(\rn)$, one can consider $-\zeta$ and immediately get that the derivative is double-sided.

We will require the following variational formula.
\begin{lemma}
\label{lemma 51}
	Let $f\in \Lc(\rn)$ be an even function and $q>0$. Suppose $f$ has non-zero finite $(q-n)$-th moment and takes the form $f=e^{-\phi}$ for some $\phi\in \cvx(\rn)$. For  $\zeta\in C_c(\rn)$, define
	\begin{equation}
		\phi_t(x) = (\phi^*+t\zeta)^*(x),
	\end{equation}
	and
	\begin{equation}
		f_t(x)=e^{-\phi_t(x)}.
	\end{equation}
	Then, we have
	\begin{equation}
	\label{eq local 1}
		\left.\frac{d}{dt}\right|_{t=0}\widetilde{V}_q(f_t) = \int_\rn \zeta(\nabla \phi(x))|x|^{q-n} f(x)\,dx = \int_\rn \zeta(y)d\ac(f;y).
	\end{equation}
\end{lemma}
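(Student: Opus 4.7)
The plan is to apply the dominated convergence theorem to the difference quotient
\[
\frac{\widetilde{V}_q(f_t) - \widetilde{V}_q(f)}{t} = \int_{\rn} \frac{e^{-\phi_t(x)} - e^{-\phi(x)}}{t}\,|x|^{q-n}\,dx,
\]
where I use $\phi_0 = \phi^{**} = \phi$ (Fenchel--Moreau). The second equality in \eqref{eq local 1} is essentially a restatement of Definition~\ref{def intro 1}: the measure $\ac(f;\cdot)$ is, by construction, the pushforward of $|x|^{q-n}f(x)\,dx$ under the almost-everywhere defined map $x \mapsto \nabla \phi(x)$, so the change-of-variables formula yields it immediately for any continuous $\zeta$.

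For the domination, write $M = \|\zeta\|_\infty < \infty$. Since $\phi_t(x) = \sup_y \{\langle x, y\rangle - \phi^*(y) - t\zeta(y)\}$ and $|t\zeta| \leq |t|M$ everywhere, comparing this supremum with the one defining $\phi = \phi^{**}$ gives at once the key uniform estimate $|\phi_t(x) - \phi(x)| \leq |t|M$ for every $x \in \rn$. Combined with the elementary bound $|e^{-a} - 1| \leq |a|e^{|a|}$, this yields
\[
\left|\frac{e^{-\phi_t(x)} - e^{-\phi(x)}}{t}\right| \leq M e^{|t|M} f(x),
\]
which for $|t| \leq 1/M$ is bounded by $eM f(x)$. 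Multiplied by $|x|^{q-n}$ this is integrable on $\rn$ by the hypothesis $\widetilde{V}_q(f) < \infty$, furnishing the required dominating function.

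For the pointwise limit, fix $x$ at which $\phi$ is differentiable; this forms a full-Lebesgue-measure subset of $\interior K_f$, and the integrand vanishes off $K_f$. Setting $y^* = \nabla \phi(x)$, the Fenchel--Young equality $\phi(x) = \langle x, y^*\rangle - \phi^*(y^*)$ combined with testing $y = y^*$ in the supremum defining $\phi_t(x)$ gives $\phi_t(x) - \phi(x) \geq -t\zeta(y^*)$. For the matching upper bound, pick a near-maximizer $y_t$ of $\phi_t(x)$; the general Fenchel--Young inequality $\phi(x) \geq \langle x, y_t\rangle - \phi^*(y_t)$ then yields $\phi_t(x) - \phi(x) \leq -t\zeta(y_t) + o(t)$. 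Showing $y_t \to y^*$ (see below) and invoking continuity of $\zeta$ gives $(\phi_t(x) - \phi(x))/t \to -\zeta(\nabla \phi(x))$, whence a first-order Taylor expansion of the exponential furnishes
\[
\lim_{t\to 0}\frac{e^{-\phi_t(x)} - e^{-\phi(x)}}{t} = \zeta(\nabla \phi(x))\, e^{-\phi(x)}.
\]

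The main technical point is establishing $y_t \to y^*$. Differentiability of $\phi$ at $x$ is dual to the uniqueness of the maximizer $y^*$ in the Fenchel--Young supremum, so by lower semicontinuity of $\phi^*$ any finite subsequential limit of $y_t$ must equal $y^*$. To rule out $|y_t|\to\infty$, I would exploit that $\phi$ is continuous on some closed ball $x + rB$ inside $\interior K_f$: applying Fenchel--Young at the point $x + ry/|y|$ gives
\[
\langle x, y\rangle - \phi^*(y) \leq \phi\bigl(x+ry/|y|\bigr) - r|y| \leq \sup_{|v|=1}\phi(x+rv) - r|y|,
\]
which tends to $-\infty$ as $|y|\to\infty$, trapping $y_t$ in a bounded set. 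With these ingredients in hand, dominated convergence completes the proof of \eqref{eq local 1}.
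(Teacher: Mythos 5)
Your proof is correct, and its skeleton is the same as the paper's: write the difference quotient of $\widetilde{V}_q(f_t)$, derive the uniform bound $|\phi_t-\phi|\leq|t|M$ from $|t\zeta|\leq|t|M$ together with order reversal of the Legendre transform (using $\phi=\phi^{**}$), deduce the dominating function $CM|x|^{q-n}f(x)$, which is integrable precisely because $\widetilde{V}_q(f)<\infty$, and conclude by dominated convergence; the second equality in \eqref{eq local 1} is, in both treatments, just the definition of $\ac(f;\cdot)$ as a pushforward. The one genuine difference is the pointwise limit: the paper simply cites Proposition 2.1 of Rotem \cite{https://doi.org/10.48550/arxiv.2006.16933} for the a.e.\ identity $\partial_t\phi_t|_{t=0}=-\zeta(\nabla\phi(x))$, whereas you prove it from scratch at each differentiability point $x$ of $\phi$ via Fenchel--Young equality for the lower bound and a near-maximizer $y_t$ for the upper bound, trapping $y_t$ in a bounded set by the coercivity estimate $\langle x,y\rangle-\phi^*(y)\leq\sup_{|v|=1}\phi(x+rv)-r|y|$ and identifying every subsequential limit with $\nabla\phi(x)$ through lower semicontinuity of $\phi^*$ and the fact that differentiability makes the maximizer unique. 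That argument is sound (differentiability points are of full measure in $\interior K_f$, and off $\overline{K_f}$ the quotient vanishes identically since $f_t=0$ wherever $f=0$), so what your route buys is self-containedness at the cost of a page of convex-analysis bookkeeping that the paper outsources to the cited result.
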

\begin{proof}
	We note that the second equality follows directly from the definition of $\ac(f;\cdot)$ and therefore only the first equality needs a proof.
	
	By the fact the $f$ is even, Proposition \ref{prop 881}, and the remark immediately below it, we have
	\begin{equation}
		\left.\frac{\partial \phi_t(x)}{\partial t}\right|_{t=0} = -\zeta(\nabla \phi(x)),
	\end{equation}
	almost everywhere in $\rn$.
	
	For simplicity, write
	\begin{equation}
		h_t(x) = |x|^{q-n} f_t(x).
	\end{equation}
	Since $\zeta\in C_c(\rn)$, there exists $M>0$ such that $|\zeta|\leq M$. Therefore, we have
	\begin{equation}
		\phi^*-|t|M\leq \phi^*+t\zeta\leq \phi^*+|t|M.
	\end{equation}
	Since Legendre transform reverses the order, we have
	\begin{equation}
		\phi-|t|M=(\phi^*+|t|M)^*\leq (\phi^*+t\zeta)^*\leq (\phi^*-|t|M)^*=\phi +|t|M.
	\end{equation}
	Using the above estimates, we have the existence of $C>0$, such that
	\begin{equation}
	\begin{aligned}
		\left|\frac{h_t(x)-h(x)}{t}\right|&= |x|^{q-n} \left|\frac{e^{-\phi_t(x)}-e^{-\phi(x)}}{t}\right|\\&\leq |x|^{q-n} e^{-\phi(x)} \left|\max\left\{\frac{e^{|t|M}-1}{t}, \frac{e^{-|t|M}-1}{t}\right\}\right|\\
		&\leq 2C |x|^{q-n} e^{-\phi(x)},
	\end{aligned}
	\end{equation}
	for sufficiently small $|t|$. Therefore, by the dominated convergence theorem, we have
	\begin{equation}
	\begin{aligned}
				\left.\frac{d}{dt}\right|_{t=0}\widetilde{V}_q(f_t) &= \int_\rn \left.\frac{\partial }{\partial t}\right|_{t=0} h_t(x)dx\\
				& =\int_\rn \zeta(\nabla \phi(x))|x|^{q-n}f(x)dx.
	\end{aligned}
	\end{equation}
\end{proof}

Let $A>0$ be an arbitrary number and $\mu$ be a nonzero even finite Borel measure on $\rn$. Consider the following optimization problem
\begin{equation}
\label{eq local 4}
	\inf \left\{\int_\rn \phi d\mu: \widetilde{V}_q(e^{-\phi^*})\geq  A, \phi \geq 0, \phi \in \L^1(\mu), \phi \text{ is an even function} \right\}.
\end{equation}

The following lemma shows that the Euler-Lagrange equation of the above constrained optimization problem implies the existence of a solution to \eqref{eq 96}.

\begin{lemma}
\label{lemma 13}
	Let $q>0$ and $\mu$ be a non-zero even finite Borel measure on $\rn$ that is not concentrated in any proper subspace. If an even function $\phi_0\in \cvx(\rn)$ is such that $\phi_0\in \L^1(\mu)$, $\phi_0(o)>0$,
	\begin{equation}
		\widetilde{V}_q(e^{-\phi_0^*})= A, \qquad\text{ for some } A>0,
	\end{equation}
	and
	\begin{equation}
		\int_\rn \phi_0d\mu=\inf \left\{\int_\rn \phi d\mu: \widetilde{V}_q(e^{-\phi^*})\geq A, \phi \geq 0, \phi \in \L^1(\mu),\phi \text{ is an even function} \right\},
	\end{equation}
	then
	\begin{equation}
		\mu = \ac(f_0;\cdot),
	\end{equation}
	where $$f_0=\frac{|\mu|}{\widetilde{V}_q(e^{-\phi_0^*})}e^{-\phi_0^*}.$$
Moreover, $f_0\in \L^1$.
\end{lemma}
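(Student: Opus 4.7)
The plan is to obtain the Euler--Lagrange equation for the constrained minimization via a Lagrange multiplier argument. Fix an arbitrary even $\zeta\in C_c(\rn)$ and set $\phi_t:=\phi_0+t\zeta$. Since $\phi_t$ need not be convex, pass to its convex envelope $\phi_t^{**}\in\cvx(\rn)$; the key point is that $(\phi_t^{**})^*=\phi_t^*$, so $G(t):=\widetilde{V}_q(e^{-\phi_t^*})$ is unaffected by this passage. Next, enforce the volume constraint with equality by introducing the rescaling constant $c(t):=\log A-\log G(t)$ and the competitor $\psi_t:=\phi_t^{**}+c(t)$; since $\psi_t^*=\phi_t^*-c(t)$, one checks $\widetilde{V}_q(e^{-\psi_t^*})=e^{c(t)}G(t)=A$. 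For sufficiently small $|t|$, $\psi_t$ is admissible: evenness and convexity are immediate, $\psi_t\in\L^1(\mu)$ follows from the pointwise bound $\psi_t\leq\phi_0+t\zeta+c(t)$ with $\mu$ finite, and $\psi_t>0$ uses $\phi_0\geq\phi_0(o)>0$ (since $\phi_0$ is even convex and hence minimized at $o$), the uniform estimate $|\phi_t^{**}-\phi_0|\leq |t|\|\zeta\|_\infty$, and the continuity of $c$ at $0$.

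Minimality of $\phi_0$ then yields $\int\psi_t\,d\mu\geq\int\phi_0\,d\mu$. Combining this with the elementary upper estimate
\begin{equation*}
\int\psi_t\,d\mu\leq\int\phi_t\,d\mu+c(t)|\mu|=\int\phi_0\,d\mu+t\int\zeta\,d\mu+c(t)|\mu|
\end{equation*}
produces $t\int\zeta\,d\mu+c(t)|\mu|\geq 0$ for all small $t$ of both signs; dividing by $t$ and letting $t\to 0^{\pm}$ gives the two-sided identity $\int\zeta\,d\mu+c'(0)|\mu|=0$. To compute $c'(0)=-G'(0)/A$, invoke Lemma \ref{lemma 51} with its ``$\phi$'' taken to be $\phi_0^*$---which is legitimate since $\phi_0^{**}=\phi_0$ by Fenchel--Moreau and $e^{-\phi_0^*}\in\Lc(\rn)$ is even with nonzero finite moment $A$---to obtain
\begin{equation*}
G'(0)=\int_\rn\zeta(\nabla\phi_0^*(x))\,|x|^{q-n}e^{-\phi_0^*(x)}\,dx=\frac{A}{|\mu|}\int_\rn\zeta(y)\,d\ac(f_0;y),
\end{equation*}
after unfolding the definition of $\ac(f_0;\cdot)$ using $f_0=(|\mu|/A)e^{-\phi_0^*}$.

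Substituting back yields $\int\zeta\,d\mu=\int\zeta\,d\ac(f_0;\cdot)$ for every even $\zeta\in C_c(\rn)$. Since $\phi_0^*$ is even, $\nabla\phi_0^*$ is odd and $|x|^{q-n}e^{-\phi_0^*(x)}\,dx$ is an even measure, so $\ac(f_0;\cdot)$---being the pushforward of the latter under the former---is also even; combined with the evenness of $\mu$, decomposing test functions into even and odd parts extends the identity to all $\zeta\in C_c(\rn)$, whence $\mu=\ac(f_0;\cdot)$. Finally, $f_0\in\L^1(\rn)$ follows because $\phi_0(o)<\infty$ (forced by $\phi_0\in\L^1(\mu)$ with $\mu\neq 0$), so the argument in the proof of Proposition \ref{prop 51} delivers a linear lower bound $\phi_0^*(y)\geq (r_0/2)|y|-M$ at infinity and hence exponential decay of $f_0$. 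The main obstacle is the non-convexity of the naive perturbation $\phi_0+t\zeta$: replacing it by its convex envelope only preserves the one-sided bound $\int\phi_t^{**}\,d\mu\leq\int\phi_t\,d\mu$, but combining the resulting inequalities for $t\to 0^+$ and $t\to 0^-$ nonetheless recovers the exact first-order Euler--Lagrange equation.
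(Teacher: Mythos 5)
Your Euler--Lagrange argument is essentially the paper's: perturb by $t\zeta$, renormalize by $c(t)=\log A-\log G(t)$ to stay on the constraint, and compute $G'(0)$ via Lemma \ref{lemma 51} applied with its ``$\phi$'' equal to $\phi_0^*$ (using $\phi_0^{**}=\phi_0$ and the scaling $\ac(cf;\cdot)=c\,\ac(f;\cdot)$). Your detour through the convex envelope $\phi_t^{**}$ and the two one-sided inequalities at $t\to 0^{\pm}$ is rigorous but unnecessary: the admissible class in \eqref{eq local 4} imposes no convexity, so $\phi_0+t\zeta+c(t)$ is already a competitor and one can differentiate directly, as the paper does. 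Up to and including the identity $\mu=\ac(f_0;\cdot)$ (and the even/odd extension, which you spell out more carefully than the paper), your proof is correct.

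The gap is in the final claim $f_0\in\L^1$. You deduce only $\phi_0(o)<\infty$ and then invoke ``the argument in the proof of Proposition \ref{prop 51}'', but that argument needs $\phi_0$ finite, hence bounded, on a whole ball $B(r_0)$: the linear lower bound $\phi_0^*(y)\geq \frac{r_0}{2}|y|-M$ is obtained by testing against points $x_y=\frac{r_0}{2}\frac{y}{|y|}$ on a small sphere, and finiteness at the single point $o$ produces no such $r_0$. Point-finiteness genuinely does not suffice: in $\R^2$ with $0<q<1$, the even, lower semi-continuous, convex function $\phi_0$ defined by $\phi_0(x)=|x_1|+1$ on the $x_1$-axis and $+\infty$ off it satisfies $\phi_0(o)=1>0$ and $e^{-\phi_0^*}=e\,1_{\{|y_1|\leq 1\}}$, which has finite nonzero $(q-n)$-th moment but infinite $\L^1$ norm. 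What rules this out is precisely the hypothesis you never use: $\mu$ is not concentrated in any proper subspace. Since $\phi_0\in\L^1(\mu)$, $\phi_0$ is finite $\mu$-a.e., hence at points spanning $\rn$; by evenness it is finite at their reflections, and by convexity on their convex hull, which is a symmetric neighborhood of the origin. With this, Proposition \ref{prop 51} (applied with $q=n$) yields $f_0\in\L^1$, which is exactly how the paper closes the proof.
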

\begin{proof}
	Let $\zeta\in C_c(\rn)$ be an even function. Set
	\begin{equation}
		\phi_t(x) = \phi_0(x)+t\zeta(x)
	\end{equation}
	and
	\begin{equation}
		\lambda(t) =\widetilde{V}_q(e^{-\phi_t^*}).
	\end{equation}
	The fact that $\lambda(t)$ is finite for small $|t|$ comes from Proposition \ref{prop 51}, the fact that $\phi_0\in \L^1(\mu)$, and that $\mu$ is a finite measure not concentrated in any proper subspaces.
	Set
	\begin{equation}
		\widetilde{\phi}_t(x) = \phi_t(x)-\ln \lambda(t)+\ln A.
	\end{equation}
	It is simple to notice that
	\begin{equation}
		\widetilde{V}_q(e^{-\widetilde{\phi}_t^*})=\widetilde{V}_q(e^{-\phi_t^*})\frac{A}{\lambda(t)}= A.
	\end{equation}
	
	Since $\zeta\in C_c(\rn)$, there exists $M>0$ such that $|\zeta|<M$. This implies that
	\begin{equation}
		\lambda(t) \leq \widetilde{V}_q(e^{-\phi_0^*}) e^{|t|M} = Ae^{|t|M}.
	\end{equation}
	Thus, the choice of $\widetilde{\phi}_t$ implies that when $|t|$ is sufficiently small, we have
	\begin{equation}
		\widetilde{\phi}_t \geq \phi_0(o) -2|t|M> 0.
	\end{equation}
	Such $\widetilde{\phi}_t$ satisfies the constraints of the optimization problem. Since $\phi_0$ is the minimizer and   $\widetilde{\phi}_0 = \phi_0$, by Lemma \ref{lemma 51},
	\begin{equation}
	\begin{aligned}
		0 &= \left.\frac{d}{dt}\right|_{t=0} \int_\rn \widetilde{\phi}_td\mu\\
			&= \int_\rn \zeta(x)d\mu(x) - \frac{|\mu|}{A} \lambda'(0)\\
			& = \int_\rn \zeta(x)d\mu(x) - \int_\rn \zeta(x)d\ac(f_0;x),
	\end{aligned}
	\end{equation}
	where $f_0=e^{-\phi_0^*+\ln |\mu|-\ln(A)}$. Since $\zeta$ is arbitrary in $C_c(\rn)$, this implies that
	\begin{equation}
		\mu =  \ac(f_0; \cdot).
	\end{equation}
	
	To see that $f_0\in \L^1$, note that since $\mu$ is not concentrated in any proper subspaces and $\phi_0\in \L^1(\mu)$, we conclude that $\phi_0$ is finite in a neighborhood of the origin. The fact that $f_0\in \L^1$ now follows from the Proposition \ref{prop 51} with $q=n$.
\end{proof}

Note that although the requirement $\phi\geq 0$ in the constraints of the optimization problem is a closed condition, to make the Euler Lagrange equation equal to $0$, we have to establish that the minimizer actually satisfies a stronger condition ($ \phi_0>0)$. This will be done in the next subsection.

\subsection{Existence of a minimizer} This section is dedicated to showing the existence of a minimizer to \eqref{eq local 4} under the assumption that $\mu$ is an even measure.

It turns out that the $C^0$ estimates here are closely related to the estimates of the dual quermassintegrals of level sets of convex functions. This is perhaps not surprising, given that in the case of characteristic functions, the $(q-n)$-th moment of a log-concave function reduces to the $q$-th dual quermassintegral of a convex body. The following lemma reveals the simple fact that if the images of the orthogonal projections of a set of convex bodies onto a $k$-dimensional subspace, where $k=1,\dots, n-1$, are uniformly bounded, then their $q$-th dual quermassintegrals are uniformly bounded when $q<k$.

\begin{lemma}
\label{lemma 2}
	Let $k=1,\dots, n-1$, and $0<q<k$. For each $R>0$, there exists $C>0$,  such that for all $K\in \mathcal{K}^n_o$ satisfying
	\begin{equation}
	\label{eq 8181}
		P_\xi K \subset B(R)\cap \xi, \quad\text{ for some $k$-dim subspace }\xi\subset \rn,
	\end{equation}
we have
	\begin{equation}
		\widetilde{V}_q(K)<C.
	\end{equation}
Here, we use $P_\xi K$ to denote the image of the orthogonal projection of $K$ onto $\xi$.
\end{lemma}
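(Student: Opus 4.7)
The plan is to reduce the bound on $\widetilde{V}_q(K)$ to a concrete spherical integral and show convergence via the hypothesis $q<k$. First, I would use the polar representation $\widetilde{V}_q(K)=\frac{1}{q}\int_{S^{n-1}}\rho_K^q(u)\,du$ from \eqref{eq 91}, so everything reduces to a pointwise estimate for $\rho_K$ that reflects the projection hypothesis \eqref{eq 8181}.

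The key pointwise bound goes as follows. For $u\in S^{n-1}$ with $P_\xi u\neq 0$, the point $\rho_K(u)u$ lies in $K$, so $\rho_K(u)\,P_\xi u=P_\xi(\rho_K(u)u)\in P_\xi K\subset B(R)\cap\xi$. Therefore $\rho_K(u)\,|P_\xi u|\leq R$, giving
\begin{equation}
\rho_K(u)\leq \frac{R}{|P_\xi u|}
\end{equation}
whenever $|P_\xi u|>0$. The subset $\{u\in S^{n-1}:P_\xi u=0\}=S^{n-1}\cap\xi^\perp$ has $\mathcal{H}^{n-1}$-measure zero (it is $(n-k-1)$-dimensional), so it may be discarded in the integral.

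Next I would integrate using spherical coordinates adapted to the orthogonal decomposition $\rn=\xi\oplus\xi^\perp$. Every $u\in S^{n-1}$ with $P_\xi u\neq 0$ and $P_{\xi^\perp}u\neq 0$ can be written uniquely as $u=\cos\theta\cdot v+\sin\theta\cdot w$ with $\theta\in(0,\pi/2)$, $v\in S^{k-1}\subset\xi$, and $w\in S^{n-k-1}\subset\xi^\perp$, and in these coordinates
\begin{equation}
d\mathcal{H}^{n-1}(u)=(\cos\theta)^{k-1}(\sin\theta)^{n-k-1}\,d\theta\,d\mathcal{H}^{k-1}(v)\,d\mathcal{H}^{n-k-1}(w),
\end{equation}
while $|P_\xi u|=\cos\theta$. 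Plugging in the bound on $\rho_K$ yields
\begin{equation}
\widetilde{V}_q(K)\leq \frac{R^q}{q}\,\mathcal{H}^{k-1}(S^{k-1})\,\mathcal{H}^{n-k-1}(S^{n-k-1})\int_0^{\pi/2}(\cos\theta)^{k-1-q}(\sin\theta)^{n-k-1}\,d\theta.
\end{equation}

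The main (and only) delicate point is the convergence of the remaining one-dimensional integral. Near $\theta=0$ the integrand behaves like $\theta^{n-k-1}$, which is integrable since $k\leq n-1$. Near $\theta=\pi/2$ (that is, near the $\xi^\perp$-equator where the projection estimate degenerates) the integrand behaves like $(\pi/2-\theta)^{k-1-q}$, and this is integrable precisely because $q<k$, i.e.\ $k-1-q>-1$. Thus the whole right-hand side is a finite constant $C=C(R,n,k,q)$ independent of $K$, which gives the desired uniform bound.
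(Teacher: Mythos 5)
Your proof is correct. The pointwise estimate $\rho_K(u)\leq R/|P_\xi u|$ follows exactly as you say from $P_\xi(\rho_K(u)u)\in P_\xi K\subset B(R)\cap\xi$, the exceptional set $S^{n-1}\cap\xi^{\perp}$ is $\mathcal{H}^{n-1}$-null, the bispherical factorization of the spherical measure is the standard one, and the one-dimensional integral $\int_0^{\pi/2}(\cos\theta)^{k-1-q}(\sin\theta)^{n-k-1}\,d\theta$ converges precisely because $q<k$ (at $\theta=\pi/2$) and $k\leq n-1$ (at $\theta=0$); the resulting constant depends only on $n,k,q,R$, hence is uniform in $K$ and in $\xi$. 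Your route, however, differs from the paper's: the paper never touches the radial function, but instead observes that \eqref{eq 8181} forces the containment $K\subset B(\sqrt{2}R)\cup[(B(R)\cap\xi)\times(\xi^{\perp}\setminus B(R))]$, bounds $\widetilde{V}_q(K)$ by the integral of $|x|^{q-n}$ over this larger region, and evaluates the cylindrical part by Fubini, using $|(y,z)|^{q-n}\leq |z|^{q-n}$ and reducing to $\int_R^{\infty}\rho^{q-k-1}\,d\rho<\infty$, which is again exactly where $q<k$ enters. So both arguments hinge on the same one-dimensional convergence; the paper's Cartesian slicing avoids the bispherical Jacobian and is slightly more elementary, while your polar-coordinate version yields the cleaner pointwise bound $\rho_K\leq R/|P_\xi\cdot|$ and an explicit closed-form constant. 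Either argument is perfectly adequate for the lemma.
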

\begin{proof}
	By \eqref{eq 8181}, we have
	\begin{equation}
		K\subset B(\sqrt{2}R)\cup [P_{\xi}K \times(\xi^{\perp}\setminus B(R))]\subset B(\sqrt{2}R)\cup [(B(R)\cap \xi) \times(\xi^{\perp}\setminus B(R))].
	\end{equation}
	Hence,
	\begin{equation}
	\begin{aligned}
		\widetilde{V}_q(K) &\leq \int_{\xi^\perp\setminus B(R)}\int_{B(R)\cap \xi} |(y,z)|^{q-n}d\mathcal{H}^k(y) d\mathcal{H}^{n-k}(z)+ \int_{B(\sqrt{2}R)}|x|^{q-n}dx\\
		&\leq \int_{\xi^\perp\setminus B(R)}|z|^{q-n}d\mathcal{H}^{n-k}(z)\int_{B(R)\cap \xi} d\mathcal{H}^k(y) + \frac{C}{q}(\sqrt{2}R)^q\\
		&= CR^k \int_{R}^{\infty} \rho^{q-n}\rho^{n-k-1}d\rho + \frac{C}{q}(\sqrt{2}R)^q\\
		&= C \frac{1}{k-q}R^{q} + \frac{C}{q}(\sqrt{2}R)^q.
	\end{aligned}
	\end{equation}
\end{proof}

For $K\in \mathcal{K}^n_o$, we write
\begin{equation}
	\overline{V}_{q}(K) = \left(\frac{1}{q}\int_\sn \rho_K^q(u)du\right)^{\frac{1}{q}},
\end{equation}
for the normalized version of dual quermassintegrals.

A quick application of Lemma \ref{lemma 2} gives the following Blaschke-Santal\'{o} type estimates for normalized dual quermassintegrals. This is a special case of Theorem 3.1 in \cite{MR4259871}.
\begin{lemma}
\label{lemma 3}
	Let $K$ be an origin-symmetric convex body in $\rn$. If $q\in (0,1)$ and $p>0$, then there exists $C>0$ independent of $K$ such that
	\begin{equation}
	\label{eq local 32}
		\overline{V}_{q}(K^*) \overline{V}_{p}(K)<C.
	\end{equation}
\end{lemma}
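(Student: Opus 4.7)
The plan is to reduce, by scale invariance, to the case where $K$ is inscribed in the unit ball, and then to estimate the two factors separately using Lemma \ref{lemma 2} with $k=1$.

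First, I would observe that the product $\overline{V}_q(K^*)\overline{V}_p(K)$ is scale invariant: replacing $K$ by $cK$ gives $\rho_{cK}=c\rho_K$ and $\rho_{(cK)^*}=c^{-1}\rho_{K^*}$, so $\overline{V}_p(cK)=c\,\overline{V}_p(K)$ and $\overline{V}_q((cK)^*)=c^{-1}\overline{V}_q(K^*)$. I may therefore normalize so that $\max_{u\in S^{n-1}}\rho_K(u)=1$; equivalently, $K\subset B$ and there exists $u^*\in S^{n-1}$ with $\rho_K(u^*)=1$. Under this normalization, $\rho_K\le 1$ on $S^{n-1}$, so one factor is immediately controlled:
\begin{equation}
\overline{V}_p(K) = \left(\frac{1}{p}\int_{S^{n-1}}\rho_K^p(u)\,du\right)^{1/p} \le \left(\frac{n\omega_n}{p}\right)^{1/p}.
\end{equation}

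For the other factor, I would use the origin-symmetry of $K$ together with Lemma \ref{lemma 2}. By \eqref{eq 210}, $h_{K^*}(u) = 1/\rho_K(u)$ for $u \in S^{n-1}$, so $h_{K^*}(u^*) = 1$, and by origin-symmetry also $h_{K^*}(-u^*)=1$. This says that $K^*$ is contained in the slab $\{y:|y\cdot u^*|\le 1\}$, which is equivalent to $P_\xi K^*\subset B(1)\cap \xi$ for the one-dimensional subspace $\xi=\R u^*$. Since $q\in(0,1)=(0,k)$ with $k=1$ and $K^*\in\mathcal{K}_o^n$, Lemma \ref{lemma 2} applied to $K^*$ yields $\widetilde{V}_q(K^*)<C$ for some absolute constant $C$, and therefore $\overline{V}_q(K^*)=\widetilde{V}_q(K^*)^{1/q}\le C^{1/q}$.

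Multiplying the two bounds and invoking the scale invariance established at the start gives $\overline{V}_q(K^*)\overline{V}_p(K)\le C^{1/q}(n\omega_n/p)^{1/p}$ for every origin-symmetric convex body $K$, as claimed. The proof has no real technical obstacle; its main conceptual ingredient is the choice of normalization (fixing the circumradius of $K$) that simultaneously makes $\rho_K$ bounded above and produces a contact point on $\partial B$. The origin-symmetry hypothesis is used precisely to upgrade the single condition $h_{K^*}(u^*)\le 1$ to a genuine two-sided slab containment, which is exactly the input required by Lemma \ref{lemma 2}.
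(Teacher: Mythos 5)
Your proof is correct and is essentially the same as the paper's: the paper normalizes so that $\min_{v\in S^{n-1}}h_{K^*}(v)=1$ (equivalent, via $h_{K^*}=1/\rho_K$, to your normalization $\max\rho_K=1$), bounds $\overline{V}_p(K)$ using $K\subset B$, and bounds $\widetilde{V}_q(K^*)$ by applying Lemma \ref{lemma 2} with the one-dimensional subspace spanned by the contact direction, using origin-symmetry for the two-sided slab containment exactly as you do.
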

\begin{proof}
Let $v_0\in \sn$ be such that $$h_{K^*}(v_0) = \min_{v\in \sn} h_{K^*}(v).$$
Note that the functional $\overline{V}_{q}(K^*) \overline{V}_{p}(K)$ is invariant under rescaling of $K$. Therefore, by rescaling, we may assume $h_{K^*}(v_0)=1$.
This, by the choice of $v_0$, implies
\begin{equation}
	B\subset K^*,
\end{equation}
and consequently,
\begin{equation}
\label{eq 8182}
	K\subset B.
\end{equation}
Moreover,
\begin{equation}
	P_{\xi} K^*\subset B\cap \xi,
\end{equation}
where $\xi = \Span \{v_0\}$. Equation \eqref{eq local 32} follows from  Lemma \ref{lemma 2} and \eqref{eq 8182}.

\end{proof}

By integrating (in a certain way) over level sets of a log-concave function, Lemma \ref{lemma 3} readily implies the following Blaschke-Santal\'{o} type inequality for log-concave functions.

\begin{lemma}
\label{lemma 4}
	Let $\phi\in \cvx(\rn)$ be an even function with $\phi(o)=0$. Assume $\phi$ is finite in a neighborhood of the origin and $\lim_{|x|\rightarrow \infty}\phi(x)=\infty$. Suppose $q>0$ and $0<p<1$. There exists $C>0$, independent of $\phi$, such that
	\begin{equation}
		\left(\int_\rn |x|^{q-n} e^{-\phi^*(x)}dx \right)\left(\int_0^\infty e^{-t}\overline{V}_{p}([\phi\leq t])^{q}dt\right)<C.
	\end{equation}
\end{lemma}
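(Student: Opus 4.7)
The plan is to combine three ingredients: a layer--cake expansion of $\int_{\rn} |y|^{q-n} e^{-\phi^*(y)}\, dy$ in terms of dual quermassintegrals of the sublevel sets of $\phi^*$; the elementary polarity inclusion $[\phi^* \le s] \subset (s+t)\,[\phi \le t]^*$ coming from Fenchel--Young; and a symmetrized form of Lemma \ref{lemma 3}. Throughout I write $K_t = [\phi \le t]$, which is an origin-symmetric convex body for every $t > 0$ because $\phi$ is even, convex, finite near the origin, and coercive. Evenness together with $\phi(o)=0$ forces $\phi\ge 0$, whence $\phi^*\ge 0$ and $\phi^*(o)=0$; the neighborhood hypothesis on the effective domain of $\phi$ gives $\phi^*(y)\ge r|y|-M$ for suitable constants, so every $[\phi^*\le s]$ with $s>0$ is likewise an origin-symmetric convex body.

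Step 1 is the layer--cake formula: writing $e^{-\phi^*(y)}=\int_0^\infty e^{-s}\mathbf{1}_{\{\phi^*(y)<s\}}\,ds$, applying Fubini, and using the polar identity $\int_K|y|^{q-n}dy=\widetilde V_q(K)$, one obtains
\begin{equation}
\int_{\rn}|y|^{q-n}e^{-\phi^*(y)}dy=\int_0^\infty e^{-s}\,\widetilde V_q([\phi^*\le s])\,ds.
\end{equation}
Step 2 is the polarity inclusion: for $x\in K_t$ and $y\in[\phi^*\le s]$, Fenchel--Young gives $\langle x,y\rangle\le \phi(x)+\phi^*(y)\le t+s$, so $h_{K_t}(y)\le t+s$ on $[\phi^*\le s]$, i.e.\ $[\phi^*\le s]\subset (s+t)K_t^*$, and hence
\begin{equation}
\widetilde V_q([\phi^*\le s])\le (s+t)^q\,\widetilde V_q(K_t^*).
\end{equation}

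Step 3 is the Blaschke--Santal\'o inequality. Applying Lemma \ref{lemma 3} to the origin-symmetric body $K_t^*$ (and using $K_t^{**}=K_t$) followed by a swap of the names of the two indices produces $\overline V_q(K_t^*)\,\overline V_p(K_t)<C_0$ valid for all $q>0$ whenever $p\in(0,1)$; raising to the $q$-th power and combining with Step 2 yields the pointwise bound $\widetilde V_q([\phi^*\le s])\,\overline V_p(K_t)^q\le C_0^q(s+t)^q$. Inserting this bound into the product of the two integrals in the statement and applying Fubini,
\begin{equation}
\left(\int_{\rn}|y|^{q-n}e^{-\phi^*(y)}dy\right)\!\left(\int_0^\infty e^{-t}\overline V_p(K_t)^q\,dt\right)\le C_0^q\!\int_0^\infty\!\!\int_0^\infty e^{-s-t}(s+t)^q\,ds\,dt=C_0^q\,\Gamma(q+2),
\end{equation}
where the double integral is computed via the change of variables $u=s+t$, $v=s$.

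The main subtlety is Step 3: Lemma \ref{lemma 3} is stated only for $q\in(0,1)$, so to cover $q\ge 1$ one must symmetrize the inequality by polarizing and exchanging the two parameters. This is precisely where the hypothesis $p\in(0,1)$ in Lemma \ref{lemma 4} (rather than a smallness assumption on $q$) enters the argument; without it, the product bound in Step 3 would not be available for large $q$, and the proof scheme would break down.
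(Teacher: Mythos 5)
Your proof is correct, and it uses the same skeleton as the paper: the layer-cake identity $\int_{\rn}|y|^{q-n}e^{-\phi^*(y)}dy=\int_0^\infty e^{-s}\int_{[\phi^*\le s]}|y|^{q-n}dy\,ds$, the Fenchel--Young inclusion $[\phi^*\le s]\subset(s+t)[\phi\le t]^*$, and Lemma \ref{lemma 3} applied to the polar body with the two indices exchanged (a swap the paper leaves implicit and you rightly spell out, since Lemma \ref{lemma 3} is stated for $q\in(0,1)$, $p>0$ while Lemma \ref{lemma 4} has $q>0$, $p\in(0,1)$). The only genuine divergence is the finishing step: the paper packages the pointwise bound into functions $F$, $G$, $H$ satisfying $F(s)^{1/2}G(t)^{1/2}\le H\bigl(\tfrac{s+t}{2}\bigr)$ and invokes the Pr\'ekopa--Leindler inequality to bound $\bigl(\int F\bigr)\bigl(\int G\bigr)$ by $\bigl(\int H\bigr)^2$, whereas you simply write the product of the two integrals as a double integral via Tonelli, insert the pointwise bound $\widetilde{V}_q([\phi^*\le s])\,\overline{V}_p([\phi\le t])^{q}\le C_0^q(s+t)^q$, and evaluate $\int_0^\infty\int_0^\infty e^{-s-t}(s+t)^q\,ds\,dt=\Gamma(q+2)$. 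Your route is more elementary (no Pr\'ekopa--Leindler needed) and gives an explicit constant $C_0^q\,\Gamma(q+2)$; the paper's version fits the Pr\'ekopa--Leindler theme of the article but buys nothing extra here. Your preliminary observations ($\phi\ge 0$ by evenness and convexity, $\phi^*\ge 0$, coercivity of $\phi$ making $\phi^*$ finite near $o$ so that the sublevel sets of $\phi$ and $\phi^*$ are origin-symmetric bodies) are all accurate and suffice to justify the polar-coordinate identities and the application of Lemma \ref{lemma 3}.
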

\begin{proof}
Recall that, for convenience, when no confusion arises, a constant $C>0$ might change from line to line (or even within the same line).

By Proposition \ref{prop 51}, since $\phi$ is finite in a neighborhood of the origin, we have
\begin{equation}
	\int_\rn |x|^{q-n} e^{-\phi^*(x)}dx<\infty.
\end{equation}

	By the definition of the Legendre-Fenchel transform, we have
	\begin{equation}
		\phi^*(x) = \sup \{\langle x, y\rangle - \phi(y)\}\geq -\phi(o)=0.
	\end{equation}
	Furthermore, we have for any $s,t\geq 0$,
	\begin{equation}
		[\phi^*\leq s]\subset (s+t) [\phi\leq t]^*.
	\end{equation}
	The proof of this fact can be found, for example, in the proof of Theorem 2.1 in \cite{MR2220210}. 
	Note that since $\phi$ is finite in a neighborhood of the origin and $\phi(o)=0$, we conclude that for each $t>0$, the sublevel set $[\phi\leq t]$  contains the origin in the interior. On the other hand, since $\lim_{|x|\rightarrow \infty}\phi(x)=\infty$, the level set $[\phi\leq t]$ is bounded. Lemma \ref{lemma 3} now implies the existence of $C>0$ such that
\begin{equation}
	\overline{V}_{q}([\phi^*\leq s])\leq (s+t) \overline{V}_{q}([\phi\leq t]^*)\leq C(s+t) \overline{V}_{p}([\phi\leq t])^{-1}.
\end{equation}
By definition of $\overline{V}_{q}$, we have
\begin{equation}
\label{eq local 5}
	\int_{[\phi^*\leq s]} |x|^{q-n}dx \leq C (s+t)^{q} \overline{V}_{p}([\phi\leq t])^{-q}.
\end{equation}

Set
\begin{equation}
	F(s) = \begin{cases}
		e^{-s}\int_{[\phi^*\leq s]} |x|^{q-n}dx, &\text{ if } s\geq 0\\
		0, &\text{ otherwise,}
	\end{cases}
\end{equation}
and
\begin{equation}
	G(t) = \begin{cases}
		e^{-t}\overline{V}_{p}([\phi\leq t])^{q}, &\text{ if } t\geq 0\\
		0, &\text{ otherwise,}
	\end{cases}
\end{equation}
and
\begin{equation}
	H(x) = \begin{cases}
		\sqrt{C}e^{-x}(2x)^{\frac{q}{2}}, &\text{ if } x\geq 0,\\
		0, &\text{ otherwise.}
	\end{cases}
\end{equation}

Then, \eqref{eq local 5} implies that for any $s,t\geq 0$,
\begin{equation}
	H\left(\frac{1}{2}s +\frac{1}{2}t\right)= \sqrt{C}e^{-\frac{s+t}{2}}\left(s+t\right)^{\frac{q}{2}}\geq F\left(s\right)^{\frac{1}{2}}G\left(t\right)^{\frac{1}{2}}.
\end{equation}
It is simple to check that the above inequality is also true when $s$ or $t$ is negative, in which case the right-hand side of the inequality is $0$.

Therefore, by the Pr\'ekopa-Leindler inequality, we have
\begin{equation}
\label{eq local 6aa}
	\left(\int_0^\infty e^{-s}\int_{[\phi^*\leq s]} |x|^{q-n}dx ds\right) \left(\int_0^\infty e^{-t}\overline{V}_{p}([\phi\leq t])^{q}dt\right) \leq C \left(\int_{0}^\infty e^{-x} x^{\frac{q}{2}}dx\right)^2<C.
\end{equation}

The fact that $\phi^*\geq 0$ and layer-cake representation now imply
\begin{equation}
	\int_\rn |x|^{q-n} e^{-\phi^*(x)}dx= \int_{0}^\infty \int_{[e^{-\phi^*}\geq t]} |x|^{q-n}dxdt = \int_{0}^\infty e^{-s}\int_{[\phi^*\leq s]} |x|^{q-n} dxds.
\end{equation}
This, when combined with \eqref{eq local 6aa}, gives the desired estimate.
\end{proof}

The above lemma immediately implies the following comparison.
\begin{lemma}
\label{lemma 11}
	Let $\phi\in \cvx(\rn)$ be an even function with $\phi(o)=0$ and $\mu$ be a nonzero even finite Borel measure not concentrated in any proper subspace. Suppose $q>0$ and $\phi\in \L^1(\mu)$. There exist $C_1>0$ and $C_2<0$, independent of $\phi$ such that
	\begin{equation}
	\label{eq local 6}
		\int_\rn \phi d\mu \geq C_1 \widetilde{V}_q^{\frac{1}{q}}(e^{-\phi^*}) +C_2.
	\end{equation}
\end{lemma}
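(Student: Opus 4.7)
The plan is as follows. Since $\phi$ is convex, even, and $\phi(o) = 0$, midpoint convexity gives $\phi(o) \leq \tfrac12(\phi(x) + \phi(-x)) = \phi(x)$, hence $\phi \geq 0$ on $\rn$ (and so $\phi^* \geq 0$). Denote $M := \int_\rn \phi\,d\mu \geq 0$. I will bound $\widetilde{V}_q(e^{-\phi^*})$ directly in terms of $M$ via an affine estimate of the form $\widetilde{V}_q^{1/q}(e^{-\phi^*}) \leq C(1 + M)$, from which rearrangement yields the lemma.

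The key preliminary is a geometric claim depending only on $\mu$: there exist constants $\epsilon_0, r_0 > 0$ such that every convex, origin-symmetric $K \subset \rn$ with $\mu(K) \geq |\mu| - \epsilon_0$ contains $B(r_0)$. To prove it, one first shows that $u \mapsto \mu(u^\perp)$ is upper semi-continuous on the compact sphere $\sn$ (via Hausdorff convergence of hyperplanes on bounded sets plus continuity of $\mu$ from above); since $\mu$ is not concentrated in any proper subspace, $\beta := \sup_u \mu(u^\perp) < |\mu|$. A uniform slab estimate then follows by a similar compactness argument on $\sn$: there exists $r_0 > 0$ with $\mu(\{|x\cdot u| \leq r_0\}) \leq (|\mu| + \beta)/2$ for every $u \in \sn$. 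If $K$ is origin-symmetric convex with $B(r_0) \not\subset K$, choosing $u \in \sn$ with $r_0 u \notin K$ and applying Hahn-Banach (using $K = -K$) places $K$ inside a slab $\{|x\cdot v| \leq c\}$ with $0 \leq c < r_0$, and the slab estimate forces $\mu(K) \leq |\mu| - \epsilon_0$ with $\epsilon_0 := (|\mu| - \beta)/2 > 0$.

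Equipped with this, Markov's inequality yields $\mu([\phi \leq T]) \geq |\mu| - M/T$ for any $T > 0$; taking $T = 2M/\epsilon_0$ makes this at least $|\mu| - \epsilon_0/2$, so the geometric claim applied to the even convex sublevel set $[\phi \leq T]$ gives $[\phi \leq T] \supset B(r_0)$ (the case $M = 0$ is immediate since then $\phi = 0$ $\mu$-a.e., so $\mu([\phi \leq 0]) = |\mu|$). For any $y \neq o$, Legendre duality applied at $r_0 y/|y| \in [\phi \leq T]$ then gives
\[
\phi^*(y) \geq \frac{r_0 y}{|y|}\cdot y - \phi\!\left(\frac{r_0 y}{|y|}\right) \geq r_0|y| - T,
\]
and combined with $\phi^* \geq 0$ we obtain the pointwise lower bound $\phi^*(y) \geq \max\{0,\, r_0|y| - T\}$.

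Substituting this into the defining integral and passing to polar coordinates,
\[
\widetilde{V}_q(e^{-\phi^*}) \leq \int_{|y|\leq T/r_0}|y|^{q-n}\,dy + \int_{|y|>T/r_0}|y|^{q-n} e^{T - r_0|y|}\,dy \leq C(n,q,r_0)(1+T)^q,
\]
where the second integral is controlled using the standard incomplete-Gamma estimate $e^T \int_T^\infty s^{q-1}e^{-s}\,ds \leq C'(1+T)^{\max\{q-1,\,0\}}$ for $T \geq 0$, $q > 0$. Taking $q$-th roots and inserting $T = 2M/\epsilon_0$ gives $\widetilde{V}_q^{1/q}(e^{-\phi^*}) \leq C^{1/q}(1 + 2M/\epsilon_0)$, which rearranges to
\[
M \geq \frac{\epsilon_0}{2C^{1/q}}\,\widetilde{V}_q^{1/q}(e^{-\phi^*}) - \frac{\epsilon_0}{2},
\]
yielding $C_1 = \epsilon_0/(2C^{1/q}) > 0$ and $C_2 = -\epsilon_0/2 < 0$ as required. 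The main obstacle is establishing the geometric claim, specifically the upper semi-continuity of $u \mapsto \mu(u^\perp)$ and the resulting uniform slab estimate over the compact sphere; the rest of the argument is routine Markov-type reasoning combined with a polar-coordinate computation.
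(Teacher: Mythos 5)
Your proof is correct, but it takes a genuinely different route from the paper's. The paper anchors everything to the fixed sublevel set $K=[\phi\le 1]$: using its inradius $r_K$ and the touching direction $u_0$, convexity gives $\phi(x)+1\ge \frac{1}{2r_K}|x\cdot u_0|$, and non-concentration of $\mu$ enters through the uniform bound $\int_\rn |x\cdot u|\,d\mu>C$ for all $u\in\sn$, yielding $\int\phi\,d\mu\ge \frac{C}{2r_K}-|\mu|$; the companion bound $\widetilde{V}_q(e^{-\phi^*})\le Cr_K^{-q}$ is then obtained from the functional Blaschke--Santal\'o-type inequality (Lemma \ref{lemma 4}, itself resting on the projection estimate Lemma \ref{lemma 2}/\ref{lemma 3} and Pr\'ekopa--Leindler) together with $[\phi\le t]\supset tK$. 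You instead quantify non-concentration as $\beta=\sup_u\mu(u^\perp)<|\mu|$ via upper semicontinuity and compactness on $\sn$, upgrade it to a uniform slab estimate, and use Markov plus a separation argument to trap a fixed ball $B(r_0)$ inside the sublevel set at height $T\sim\int\phi\,d\mu$; the pointwise bound $\phi^*(y)\ge\max\{0,r_0|y|-T\}$ and a direct polar-coordinate computation then give $\widetilde{V}_q(e^{-\phi^*})\le C(1+T)^q$, which rearranges to the claim. In effect the paper fixes the level ($t=1$) and lets the inradius vary, while you fix the inradius ($r_0$) and let the level vary with $\int\phi\,d\mu$; your argument is more self-contained, bypassing Lemma \ref{lemma 4} and the dual-quermassintegral estimates entirely (at the cost of the compactness/semicontinuity work on the sphere, which is also what is hidden behind the paper's assertion $\inf_u\int|x\cdot u|\,d\mu>0$), whereas the paper's route reuses machinery it develops anyway and keeps the Santal\'o-type structure visible. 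Two harmless details worth tidying: your geometric claim should be stated with strict inequality $\mu(K)>|\mu|-\epsilon_0$ (that is what the separation argument gives), which suffices since you apply it with mass at least $|\mu|-\epsilon_0/2$; and the incomplete-Gamma estimate should be split as you indicate into the regimes $0<q<1$ (bounded) and $q\ge1$ (growth $(1+T)^{q-1}$), both of which are absorbed into $C(1+T)^q$.
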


\begin{proof}
If
\begin{equation*}
	\widetilde{V}_q(e^{-\phi^*})=0,
\end{equation*}
or equivalently $\phi^*$ is almost everywhere $+\infty$, there is nothing to prove. Therefore, we may assume that $\phi^*$ is finite in a neighborhood of the origin, and by Proposition \ref{prop 51}, we have that
\begin{equation*}
	\int_\rn e^{-\phi(x)}dx<\infty,
\end{equation*}
or equivalently, $\phi\rightarrow \infty$ as $|x|\rightarrow \infty$.

	We first note that since $\mu$ is not concentrated in any proper subspaces, there exists $c_1>0$ such that 
	\begin{equation}
		\int_\rn |\langle x, u\rangle|d\mu(x)>c_1,
	\end{equation}
	for every $u\in \sn$.
	
	Set
	\begin{equation}
		K  = \{x\in \rn: \phi(x)\leq 1\}.
	\end{equation}
	Since $\phi\in \L^1(\mu)$ is even and $\mu$ is not concentrated in any proper subspace, we have that $\phi$ is finite in a neighborhood of $o$. Therefore, by the fact that $\phi(o)=0$, we have that $K$ is a symmetric closed convex set that contains the origin in its interior.
	
	Let $r_K$ be such that
	\begin{equation}
		r_K = \sup\{r>0: rB\subset K\}.
	\end{equation}
	The facts that $\phi\rightarrow \infty$ when $|x|\rightarrow \infty$ imply that $K\neq \rn$. Therefore $0<r_K<\infty$. Note that since $K$ is closed, there exists $u_0\in \sn$ such that $r_K u_0\in \partial K$. This in turn implies that $h_K(u_0)=r_K$.
		
	We fix now an arbitrary $x\in \rn$ with $|\langle x,u_0\rangle|>2r_K$. We consider $x' = \frac{2r_K}{|\langle x,u_0\rangle|}x$. Note that by choice of $x$, we have $|\langle x',u_0\rangle|>r_K$. This implies that $x'\notin K$ and therefore $\phi(x')>1$. By convexity of $\phi$, we now have
	\begin{equation}
		1<\phi(x') = \phi\left(\left(1-\frac{2r_K}{|\langle x,u_0\rangle|}\right)o+\frac{2r_K}{|\langle x, u_0\rangle|}x\right)\leq  \frac{2r_K}{|\langle x,u_0\rangle|} \phi(x).
	\end{equation}
	Hence, for every $x\in \rn$, we have
	\begin{equation}
		\phi(x)+1 \geq \frac{1}{2r_K} |\langle x, u_0\rangle|.
	\end{equation}
	Integrating with respect to $\mu$, we have
	\begin{equation}
	\label{eq local 7}
		\int_\rn \phi d\mu \geq \frac{1}{2r_K}\int_\rn |\langle x, u_0\rangle| d\mu(x)-|\mu|\geq \frac{c_1}{2r_K}-|\mu|.
	\end{equation}
	
	We now estimate the right-hand side of \eqref{eq local 6}. By Lemma \ref{lemma 4}, there exist $0<p<1$ and $c_2>0$ such that
	\begin{equation}
	\label{eq local 8}
		\widetilde{V}_q(e^{-\phi^*})< c_2 \left(\int_0^\infty e^{-t}\overline{V}_{p}([\phi\leq t])^{q}dt\right)^{-1}.
	\end{equation}
	
	Note that by the convexity of $\phi$ and since $\phi(o)=0$, we have
	\begin{equation}
		[\phi\leq t]\supset t K,
	\end{equation}
	for $t<1$. Therefore, we have
	\begin{equation}
	\label{eq local 9}
	\begin{aligned}
		\int_0^\infty e^{-t}\overline{V}_{p}([\phi\leq t])^{q}dt&\geq \int_{0}^1 e^{-t} \overline{V}_{p}(tK)^{q}dt \\
		& = \overline{V}_{p}(K)^{q} \int_{0}^1 e^{-t}t^{q}dt\\
		&\geq \overline{V}_{p}(r_KB)^{q}\int_{0}^1 e^{-t}t^{q}dt\\
		&= c_3 r_K^{q},
	\end{aligned}
	\end{equation}
	for some $c_3>0$.
	
	Combining \eqref{eq local 8} and \eqref{eq local 9}, for some $c_4>0$, we have
	\begin{equation}
		\widetilde{V}_q^{\frac{1}{q}}(e^{-\phi^*})\leq \frac{c_4}{r_K}.
	\end{equation}
 By \eqref{eq local 7}, we have the existence of $c_5>0$ such that
	\begin{equation}
		\int \phi d\mu \geq c_5\widetilde{V}_q^{\frac{1}{q}}(e^{-\phi^*})-|\mu|=:C_1\widetilde{V}_q^{\frac{1}{q}}(e^{-\phi^*})+C_2,
	\end{equation}
	for some $C_1>0$ and $C_2<0$.
	\end{proof}

In \cite[Lemma 17]{MR3341966}, Cordero-Erausquin and Klartag demonstrated that, if $\phi_k$ is a sequence of nonnegative convex functions with uniform $\L^1(\mu)$ bound and $\phi_k(o)=0$, then one may construct a subsequence $\phi_{k_j}$ and a nonnegative convex function $\phi$ such that the $\L^1(\mu)$ norm of $\phi$ is bounded from above by the lower limit of the $\L^1(\mu)$ norm of the subsequence while $\widetilde{V}_n(e^{-\phi^*})$ is bounded from below by the upper limit of $\widetilde{V}_n(e^{-\phi_{k_j}^*})$. As observed by Rotem \cite{https://doi.org/10.48550/arxiv.2006.16933}, the assumption $\phi_k(o)=0$ is only used to know that $\phi_k(\lambda x)$ is increasing in $\lambda$ on $(0,1)$ and this is trivially true when $\phi_k$ is even. Upon further inspection of the proof, it is not hard to see that such a ``selection theorem'' holds for any $q>0$. We state the following generalized version without providing a proof.

\begin{lemma}[\cite{MR3341966}]\label{lemma selection}
	Let $q>0$ and $\mu$ be a non-zero even finite Borel measure on $\rn$. Assume $\mu$ is not concentrated in any proper subspace. If $\phi_k\in \cvx(\rn)$ is non-negative, even and
	\begin{equation}
		\sup_k \int_\rn \phi_k d\mu<\infty,
	\end{equation}
	then, there exists a subsequence $\phi_{k_j}$ and a non-negative, even, convex function $\phi\in \L^1(\mu)$ such that
	\begin{equation}
	\label{eq 62}
		\int_\rn \phi d\mu\leq \liminf_{j\rightarrow\infty} \int_\rn \phi_{k_j}d\mu, \quad \text{and}\quad \widetilde{V}_q(e^{-\phi^*})\geq \limsup_{j\rightarrow\infty} \widetilde{V}_q(e^{-\phi_{k_j}^*}).
	\end{equation}
\end{lemma}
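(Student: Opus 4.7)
The strategy is to adapt the argument of \cite{MR3341966} for the case $q=n$, where the only substantive change needed to handle general $q>0$ is in the upper semi-continuity of $\widetilde{V}_q(e^{-\phi^*})$; this I would establish by reverse Fatou with a uniform integrable majorant tailored to the weight $|y|^{q-n}$. First I would extract a pointwise-convergent subsequence. Evenness and convexity force each $\phi_k$ to attain its minimum at the origin, so $\phi_k(o)\cdot|\mu|\le \int_\rn \phi_k\,d\mu$ gives a uniform bound $\phi_k(o)\le M_0$. Replacing $\phi_k$ by $\phi_k-\phi_k(o)$ and repeating the argument in the proof of Lemma \ref{lemma 11}---which crucially uses the hypothesis that $\mu$ is not concentrated in any proper subspace to obtain $\inf_{u\in\sn}\int_\rn|x\cdot u|\,d\mu(x)>0$---the level set $\{\phi_k\le 1+\phi_k(o)\}$ is seen to contain a ball $cB$ with $c>0$ independent of $k$. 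Convexity then makes $\{\phi_k\}$ uniformly Lipschitz on $(c/2)B$, and a standard diagonal extraction over a countable dense subset of $\rn$ (in which each point subsequence either converges to a finite value or tends to $+\infty$) produces a subsequence $\phi_{k_j}$ converging pointwise on $\rn$ to a nonnegative, even, convex function $\phi$, with local uniform convergence on the interior of the effective domain of $\phi$.

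The inequality $\int_\rn \phi\,d\mu\le \liminf_j\int_\rn \phi_{k_j}\,d\mu$ in \eqref{eq 62} is immediate from Fatou's lemma applied to the nonnegative sequence $\phi_{k_j}$; here one uses that pointwise convergence of convex functions at a dense subset propagates to the interior of the limit's effective domain, and that the $\mu$-measure of $\partial\,\mathrm{dom}(\phi)$ can be shown to be negligible by the standard arguments in \cite{MR3341966}. In particular, $\phi\in \L^1(\mu)$.

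The main obstacle is the upper semi-continuity of $\widetilde{V}_q(e^{-\phi_{k_j}^*})$, where the weight $|y|^{q-n}$ enters. From $\phi_k\le 1+M_0$ on $cB$, arguing exactly as in Proposition \ref{prop 51} one obtains
\begin{equation*}
	\phi_k^*(y)\ge \tfrac{c}{2}|y| - (1+M_0)\quad\text{for every }y\in\rn,
\end{equation*}
so that $e^{-\phi_k^*(y)}\le e^{1+M_0}e^{-c|y|/2}$ uniformly in $k$. Combined with the elementary bound $e^{-\phi_k^*(y)}\le e^{\phi_k(o)}\le e^{M_0}$ (needed to tame the singularity of $|y|^{q-n}$ near the origin when $q<n$), this furnishes a uniform integrable majorant $|y|^{q-n}\min\{e^{M_0},\,e^{1+M_0}e^{-c|y|/2}\}$ on $\rn$ for every $q>0$. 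On the other hand, pointwise convergence $\phi_{k_j}\to\phi$ implies, directly from the definition of the Legendre transform, that $\liminf_j \phi_{k_j}^*(y)\ge \phi^*(y)$ for every $y$, hence $\limsup_j e^{-\phi_{k_j}^*(y)}\le e^{-\phi^*(y)}$ pointwise. Reverse Fatou then yields $\limsup_j \widetilde{V}_q(e^{-\phi_{k_j}^*})\le \widetilde{V}_q(e^{-\phi^*})$, completing the proof. I expect the subtlest points to be the uniform inradius estimate (which must be extracted from the non-degeneracy of $\mu$ in a quantitative way) and the verification that the subsequence actually gives $\mu$-a.e.\ convergence rather than only convergence on a dense set; both follow the template of \cite{MR3341966} adapted with the pointwise bounds above.
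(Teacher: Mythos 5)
Since the paper defers to \cite{MR3341966} and explicitly states ``{\it we state the following generalized version without providing a proof},'' there is no proof in the paper itself to compare against; your proposal is therefore a reconstruction and must be judged against the Cordero--Erausquin--Klartag template it claims to adapt. On that score it is sound. Evenness plus convexity does force $\phi_k(o)=\min\phi_k$, so $\phi_k(o)\le M_0 := (\sup_k\int\phi_k\,d\mu)/|\mu|$ is a uniform bound; the inradius estimate from the proof of Lemma \ref{lemma 11} (relying on $\inf_{u\in\sn}\int|x\cdot u|\,d\mu>0$, which is exactly where non-concentration in a proper subspace enters) does give $cB\subset\{\phi_k\le 1+M_0\}$ with $c$ independent of $k$; the resulting uniform local Lipschitz bounds give a pointwise-convergent subsequence by the standard diagonal argument; Fatou handles $\int\phi\,d\mu\le\liminf\int\phi_{k_j}\,d\mu$; and the two-sided bound $-M_0\le\phi_{k_j}^*(y)$ and $\phi_{k_j}^*(y)\ge\tfrac{c}{2}|y|-(1+M_0)$ yields the uniform integrable majorant
\[
|y|^{q-n}\min\bigl\{e^{M_0},\,e^{1+M_0}e^{-c|y|/2}\bigr\},
\]
which, combined with $\liminf_j\phi_{k_j}^*\ge\phi^*$ pointwise, gives the upper semi-continuity of $\widetilde V_q(e^{-\phi_{k_j}^*})$ by reverse Fatou. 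This is precisely the new ingredient needed to extend the $q=n$ case, and you have identified and handled it correctly.

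Two cosmetic points. First, claiming pointwise convergence ``on $\rn$'' is a touch too strong; convergence at boundary points of $\mathrm{dom}(\phi)$ need not hold, but this is harmless: define $\phi$ as the closed convex envelope of the limit on the dense set, so $\phi\le\liminf_j\phi_{k_j}$ pointwise everywhere, and Fatou with the nonnegative sequence $\phi_{k_j}$ gives the desired inequality without needing $\mu(\partial\,\mathrm{dom}\,\phi)=0$. Second, since $\phi_k\le 1+M_0$ on all of $cB$, you may test the Legendre transform at $x_y=c\,y/|y|$ and get the slightly stronger $\phi_k^*(y)\ge c|y|-(1+M_0)$; the factor $1/2$ is superfluous but of course does no harm.
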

	
We are now ready to show the existence of a minimizer to \eqref{eq local 4}.
\begin{lemma}
\label{lemma 12}
	Let $q>0$ and $\mu$ be a non-zero even finite Borel measure on $\rn$. Suppose $\mu$ is not concentrated in any proper subspace and $\int_\rn |x|d\mu(x)<\infty$. For sufficiently large $A>0$, there exists an even function $\phi_0 \in \cvx(\rn)$ such that $\phi_0\in \L^1(\mu)$, $\phi_0(o)>0$,
	\begin{equation}
		\widetilde{V}_q(e^{-\phi_0^*})= A,
	\end{equation}
	and
	\begin{equation}
	\label{eq local 14}
		\int_\rn \phi_0d\mu=\inf \left\{\int_\rn \phi d\mu: \widetilde{V}_q(e^{-\phi^*})\geq  A, \phi \geq 0, \phi \in \L^1(\mu),\phi \text{ is an even function} \right\}.
	\end{equation}
\end{lemma}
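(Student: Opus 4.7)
The plan is the direct method of the calculus of variations, combined with a scaling argument that forces the minimizer off the boundary $\{\phi=0\}$ of the non-negativity constraint. First I would check the admissible set is non-empty with finite infimum: the test function $\phi(x)=\lambda|x|$ with $\lambda>0$ is even, convex, non-negative, and its Legendre transform is $0$ on $\lambda B$ and $+\infty$ outside, so $\widetilde{V}_q(e^{-\phi^*})=\tfrac{|\sn|}{q}\lambda^q\geq A$ for $\lambda$ large, while $\int\phi\,d\mu=\lambda\int|x|\,d\mu(x)<\infty$ by hypothesis. Given a minimizing sequence $\{\phi_k\}$, replacing each $\phi_k$ by its biconjugate $\phi_k^{**}$ only decreases $\int \phi_k\,d\mu$ while preserving the constraint (since $(\phi_k^{**})^*=\phi_k^*$ and $\phi_k^{**}\geq 0$), so we may assume $\phi_k\in \cvx(\rn)$. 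Because $\sup_k \int \phi_k\,d\mu<\infty$, Lemma \ref{lemma selection} produces a subsequence converging pointwise to a non-negative, even, convex $\phi_0\in \L^1(\mu)$ with
\begin{equation}
\int_\rn \phi_0\,d\mu\leq \liminf_k \int_\rn \phi_k\,d\mu\quad\text{and}\quad \widetilde{V}_q(e^{-\phi_0^*})\geq \limsup_k \widetilde{V}_q(e^{-\phi_k^*})\geq A,
\end{equation}
so $\phi_0$ is admissible and attains the infimum.

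The main obstacle is showing $\phi_0(o)>0$ for $A$ large, which is what places $\phi_0$ in the interior of the constraint $\phi\geq 0$ and is needed to derive the Euler--Lagrange equation in Lemma \ref{lemma 13}. Suppose for contradiction $\phi_0(o)=0$, and for $\lambda>1$ introduce the competitor
\begin{equation}
\widetilde{\phi}_\lambda(x)=\phi_0(x/\lambda)+q\log\lambda,
\end{equation}
which is even, convex, and $\geq q\log\lambda>0$. A direct computation using $(\phi(\cdot/\lambda))^*(y)=\phi^*(\lambda y)$ followed by the change of variables $z=\lambda y$ gives $\widetilde{V}_q(e^{-\widetilde{\phi}_\lambda^*})=\widetilde{V}_q(e^{-\phi_0^*})\geq A$, so $\widetilde{\phi}_\lambda$ is admissible. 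Since $\phi_0(o)=0$ and $\phi_0$ is convex, $\phi_0(x/\lambda)\leq \phi_0(x)/\lambda$, hence
\begin{equation}
\int_\rn \widetilde{\phi}_\lambda\,d\mu \leq \tfrac{1}{\lambda}\int_\rn \phi_0\,d\mu + q(\log\lambda)|\mu|.
\end{equation}
Writing $\lambda=1+t$ and expanding, the right-hand side equals $\int\phi_0\,d\mu - t\bigl(\int\phi_0\,d\mu - q|\mu|\bigr)+O(t^2)$. By Lemma \ref{lemma 11} applied to $\phi_0$ together with $\widetilde{V}_q(e^{-\phi_0^*})\geq A$, we have $\int\phi_0\,d\mu \geq C_1 A^{1/q}+C_2$, which exceeds $q|\mu|$ once $A$ is sufficiently large. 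For such $A$ and small $t>0$, $\widetilde{\phi}_\lambda$ yields a strictly smaller objective value, contradicting minimality; therefore $\phi_0(o)>0$.

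Finally, to upgrade to equality $\widetilde{V}_q(e^{-\phi_0^*})=A$, suppose by contradiction the inequality were strict. Using $(\phi_0-\beta)^*=\phi_0^*+\beta$, for any $\beta\in \bigl(0,\min\{\phi_0(o),\log(\widetilde{V}_q(e^{-\phi_0^*})/A)\}\bigr)$ the function $\phi_0-\beta$ remains admissible, since $\widetilde{V}_q(e^{-(\phi_0-\beta)^*})=e^{-\beta}\widetilde{V}_q(e^{-\phi_0^*})>A$ and $\phi_0-\beta\geq 0$; but $\int(\phi_0-\beta)\,d\mu=\int\phi_0\,d\mu-\beta|\mu|<\int\phi_0\,d\mu$, contradicting minimality once more. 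This closes the proof.
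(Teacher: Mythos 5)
Your proof is correct, and it shares the paper's overall skeleton: direct method with biconjugation of the minimizing sequence, the selection lemma (Lemma \ref{lemma selection}) to extract a minimizer, and the constant-subtraction argument to force $\widetilde{V}_q(e^{-\phi_0^*})=A$. Where you genuinely diverge is the key interior-point step $\phi_0(o)>0$. The paper fixes $A$ so that $|\mu|\ln A+c_n\int_\rn|x|\,d\mu<C_1A^{1/q}+C_2$, bounds the minimum value from above by testing against the explicit competitor $\Gamma(x)=\ln A+c_n|x|$ (an upper bound growing only logarithmically in $A$), and contradicts the Lemma \ref{lemma 11} lower bound $C_1A^{1/q}+C_2$ valid when $\phi_0(o)=0$. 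You instead perturb the putative minimizer itself by the dilation $\widetilde{\phi}_\lambda(x)=\phi_0(x/\lambda)+q\log\lambda$, which preserves $\widetilde{V}_q$ of the conjugate exactly (your computation $(\phi_0(\cdot/\lambda))^*(y)=\phi_0^*(\lambda y)$ and the change of variables are right), stays admissible, and strictly lowers the objective as soon as $\int_\rn\phi_0\,d\mu>q|\mu|$, using the convexity bound $\phi_0(x/\lambda)\le\phi_0(x)/\lambda$ which is valid precisely because $\phi_0(o)=0$; Lemma \ref{lemma 11} is then invoked only to secure $\int_\rn\phi_0\,d\mu\ge C_1A^{1/q}+C_2>q|\mu|$ for large $A$, a threshold depending only on $\mu$ and $q$ as the statement requires. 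Your route buys a cleaner use of the scaling invariance of the constraint and isolates exactly how largeness of $A$ enters; the paper's route buys an explicit a priori bound on the infimum, which it also reuses to check that the minimizing sequence has bounded $\mu$-integrals (you get this instead from finiteness of the infimum via the test function $\lambda|x|$, which is fine). Two small points to tidy: the statement asks for $\phi_0\in\cvx(\rn)$, so after the selection lemma you should, as the paper does, replace $\phi_0$ by $\phi_0^{**}$ to guarantee lower semi-continuity (this changes neither the objective, nor $\phi_0^*$, nor the value at the origin); and the inequality $\phi_0-\beta\ge0$ in your last step uses that an even convex function attains its minimum at the origin, which deserves a word (the paper leaves the same point implicit).
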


\begin{proof}
Let $c_n>0$ be such that
$$\int_{B(c_n)} |x|^{q-n}dx =1.$$
Set
\begin{equation}
	C = c_n\int_\rn |x|d\mu>0.
\end{equation}
By the condition on $\mu$, it is simple to see that $C$ is finite.

We choose a fixed $A>0$ such that
	\begin{equation}
	\label{eq 8191}
		|\mu|\ln A+ C<C_1A^{\frac{1}{q}}+C_2,
	\end{equation}
	where $C_1, C_2$ are from Lemma \ref{lemma 11} and depend only on $\mu$ and $q$. It is simple to see that such an $A$ exists and in fact, all sufficiently large $A>0$ satisfies \eqref{eq 8191}.
	
	Let $\phi_k$ be a minimizing sequence; that is, we have $\phi_k\geq 0$, $\phi_k\in \L^1(\mu)$, $\phi_k$ is even,
	\begin{equation}
		\widetilde{V}_q(e^{-\phi_k^*}) \geq A,
	\end{equation}
	and
	\begin{equation}
		\lim_{k\rightarrow \infty} \int_\rn \phi_k d\mu =\inf \left\{\int_\rn \phi d\mu: \widetilde{V}_q(e^{-\phi^*})\geq  A, \phi \geq 0, \phi \in \L^1(\mu),\phi \text{ is an even function} \right\}.
	\end{equation}
	
	Notice that
	\begin{equation}
		\phi_k^{**}\leq \phi_k,
	\end{equation}
	and since $\phi_k\geq 0$, we have $\phi_k^{**}\geq 0$. Moreover, note that $(\phi_k^{**})^*=\phi_k^*$. Therefore, we may without loss of generality, assume that $\phi_k \in \cvx(\rn)$.
	
	We claim now that
\begin{equation}
\label{eq 61}
	\sup_{k} \int_\rn \phi_k d\mu<\infty.
\end{equation}
To see this, set
\begin{equation}
	\Gamma(x) = \ln A + c_n|x|,
\end{equation}
It is simple to compute that
	\begin{equation}
		\Gamma^*(x) = 1^\infty_{B(c_n)}(x)-\ln A,
	\end{equation}
	where
	\begin{equation}
		1^\infty_{B(c_n)}(x) = \begin{cases}
			0, &\text{ if }x\in B(c_n),\\
			\infty, &\text{otherwise}.
		\end{cases}
	\end{equation}
	Moreover,
	\begin{equation}
		\widetilde{V}_q(e^{-\Gamma^*})=A,
	\end{equation}
	according to the choice of $c_n$. Note that $\Gamma(x)$ is positive and even. Moreover,
	\begin{equation}
	\label{eq local 15}
		\int_\rn \Gamma(x)d\mu = |\mu|\ln A+ C,
	\end{equation}
	Since $\phi_k$ is a minimizing sequence, we conclude \eqref{eq 61}.
	
	By Lemma \ref{lemma selection}, there exists a subsequence $\phi_{k_j}$ and a non-negative even convex function $\phi_0$ such that \eqref{eq 62} holds. In particular, this suggests that $\phi_0$ is a minimizer to \eqref{eq local 14}.  By possibly replacing $\phi_0$ by $\phi_0^{**}$, we may assume that $\phi_0\in \cvx(\rn)$.
	
	It remains to show that $\phi_0(o)>0$ and
	\begin{equation}
	\label{eq 63}
		\widetilde{V}_q(e^{-\phi_0^*})= A.
	\end{equation}
	
	To see the former, we argue by contradiction. Assume $\phi_0(o)=0$. Therefore, we may conclude by using Lemma \ref{lemma 11} that
	\begin{equation}
		|\mu|\ln A+ C\geq \int_\rn \phi_0 d\mu \geq C_1 \widetilde{V}_q^{\frac{1}{q}}(e^{-\phi_0^*}) +C_2\geq C_1A^{\frac{1}{q}}+C_2.
	\end{equation}
	This is a contradiction to \eqref{eq 8191}.
	
	To show \eqref{eq 63}, if it was not the case, we set
	\begin{equation}
		\widetilde{\phi}_0 = \phi_0-\varepsilon.
	\end{equation}
	Note that for sufficiently small $\varepsilon>0$, we have $\widetilde{\phi}_0>0$ thanks to $\phi_0(o)>0$. Moreover,
	\begin{equation}
		\widetilde{V}_q(e^{-\widetilde{\phi}_0^*})= e^{-\varepsilon}\widetilde{V}_q(e^{-\phi_0^*}) >A,
	\end{equation}
	for sufficiently small $\varepsilon>0$. However, it is trivial to see
	\begin{equation}
		\int_\rn \widetilde{\phi}_0d\mu<\int_\rn {\phi}_0d\mu,
	\end{equation}
	which contradicts the minimality of $\phi_0$.
\end{proof}

Lemmas \ref{lemma 13} and \ref{lemma 12} now immediately solve the Minkowski problem \eqref{eq 96}.
\begin{theorem}
\label{thm 500}
	Let $q>0$ and $\mu$ be a non-zero even finite Borel measure on $\rn$. Suppose $\mu$ is not concentrated in any proper subspace and $\int_\rn |x|d\mu(x)<\infty$. There exists an even $f_0\in \Lc(\rn)$ with nonzero finite $\L^1$ norm such that
	\begin{equation}
		\mu = \ac(f_0;\cdot).
	\end{equation}
\end{theorem}

To complete this section, we show that the assumption that
\begin{equation}
	\int_\rn |x|d\mu(x)<\infty
\end{equation}
is necessary in Theorem \ref{thm 500}. We require the following basic lemma about log-concave functions.
\begin{lemma}
\label{lemma log concave derivative control}
	Suppose $f: \mathbb{R}\rightarrow [0,\infty)$ is a log-concave function. If
	\begin{equation}
	\label{eq 2001}
		\lim_{t\rightarrow \pm \infty} f(t)=0,
	\end{equation}
	then, for each $t_0\geq 0$, we have
	\begin{equation}
		\int_{|t|\geq t_0} |f'(t)|dt\leq 4\sup_{|t|\geq t_0} f(t).
	\end{equation}
\end{lemma}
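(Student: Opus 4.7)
The plan is to exploit the fact that a log-concave function on $\mathbb{R}$ is unimodal, which reduces the estimate on each tail $(-\infty, -t_0]$ and $[t_0, +\infty)$ to a problem on monotone pieces, where $\int |f'|$ telescopes to a simple boundary expression. First I would write $f=e^{-\phi}$ with $\phi:\mathbb{R}\to(-\infty,\infty]$ convex and lower semi-continuous. Convexity of $\phi$ forces its one-sided derivatives to be non-decreasing, hence $f'$ (wherever defined) changes sign at most once, from $+$ to $-$. Therefore there exists $t^* \in [-\infty,+\infty]$ such that $f$ is non-decreasing on $(-\infty, t^*]$ and non-increasing on $[t^*,\infty)$. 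The hypothesis \eqref{eq 2001} ensures that $f$ is bounded and that $f(t)\to 0$ as $t\to\pm\infty$.

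Next I would split the integral into the two tails and bound each by twice the corresponding supremum. Writing $M_+:=\sup_{t\geq t_0} f(t)$, there are two cases. If $t^*\leq t_0$, then $f$ is non-increasing on $[t_0,\infty)$, and the fundamental theorem of calculus gives
\begin{equation}
\int_{t_0}^\infty |f'(t)|\,dt = f(t_0)-\lim_{t\to\infty}f(t) = f(t_0)\leq M_+.
\end{equation}
If $t^*> t_0$, then $f$ first rises on $[t_0,t^*]$ and then falls on $[t^*,\infty)$, yielding
\begin{equation}
\int_{t_0}^\infty |f'(t)|\,dt = (f(t^*)-f(t_0))+f(t^*) = 2f(t^*)-f(t_0)\leq 2M_+.
\end{equation}
In either case $\int_{t_0}^\infty |f'(t)|\,dt\leq 2M_+$. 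A symmetric argument applied to $(-\infty, -t_0]$ gives $\int_{-\infty}^{-t_0}|f'(t)|\,dt\leq 2M_-$ with $M_-:=\sup_{t\leq -t_0}f(t)$.

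Adding the two estimates, and using that $M_\pm \leq \sup_{|t|\geq t_0}f(t)$, one obtains
\begin{equation}
\int_{|t|\geq t_0}|f'(t)|\,dt \leq 2(M_++M_-)\leq 4\sup_{|t|\geq t_0}f(t),
\end{equation}
as claimed.

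The main technical subtlety is that $f$ need not be differentiable everywhere and may have a jump to $0$ at the boundary of its support. However, on the interior of its support $f=e^{-\phi}$ is locally Lipschitz (by convexity of $\phi$), so the fundamental theorem of calculus applies on every closed subinterval of that interior; any boundary jumps contribute only to the singular part of the distributional derivative and not to the Lebesgue integral $\int |f'|\,dt$. Taking limits at the endpoints of the support and using $f(\pm\infty)=0$ then closes the monotone-piece computation above.
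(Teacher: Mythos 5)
Your proof is correct and follows essentially the same route as the paper: use unimodality of the log-concave function together with local Lipschitz continuity on the interior of its support and the fundamental theorem of calculus to bound each tail integral by $2\sup$, then add the two tails. The paper states this more tersely, while you spell out the case analysis at the mode and the (correctly handled) issue of possible jumps at the boundary of the support.
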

\begin{proof}
	Since $f$ is log-concave, it is locally Lipschitz in the interior of the interval in which it is positive. Moreover, it is unimodal. Therefore, by \eqref{eq 2001} and the fundamental theorem of calculus, we have
	\begin{equation}
		\int_{t\geq t_0} |f'(t)|dt\leq 2 \sup_{t\geq t_0} f(t),
	\end{equation}
	and
	\begin{equation}
		\int_{t\leq -t_0} |f'(t)|dt\leq 2 \sup_{t\leq -t_0} f(t).
	\end{equation}
	Combining the above two inequality gives us the desired result.
\end{proof}
\begin{theorem}
\label{thm necessity}
	Let $q>0$ and $f\in \Lc(\rn)$ be with nonzero finite $\L^1$ norm. If $K_f$, the support of $f$, contains the origin as an interior point, then
	\begin{equation}
	\label{eq 1002}
		\int_{\rn} |x|  d\ac(f;x) = \int_{\rn} |\nabla f(x)| \cdot |x|^{q-n}dx<\infty.
	\end{equation}
	In particular, \eqref{eq 1002} is valid for even $f\in \Lc(\rn)$ with nonzero finite $\L^1$ norm.
\end{theorem}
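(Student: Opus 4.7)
For the identity in \eqref{eq 1002}, observe that by Definition \ref{def intro 1} the measure $\ac(f;\cdot)$ is the push-forward of $|x|^{q-n}f(x)\,dx$ under $\nabla\phi$, so
\[
\int_{\rn}|y|\,d\ac(f;y) \;=\; \int_{\rn}|\nabla\phi(x)|\,|x|^{q-n}f(x)\,dx \;=\; \int_{\rn}|\nabla f(x)|\,|x|^{q-n}\,dx,
\]
where $|\nabla f|=f|\nabla\phi|$ a.e.\ on $\interior K_f$ and both sides vanish off $K_f$. So the real content of the theorem is the finiteness claim.

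Choose $r_0>0$ with $B(r_0)\subset\interior K_f$ and split the integral as $I_1+I_2$ over $B(r_0/2)$ and its complement. Since $\phi$ is convex and finite on $\interior K_f$, it is Lipschitz on the compact set $\overline{B(r_0/2)}$, giving $|\nabla\phi|\le L_0$ a.e.\ there; continuity of $f=e^{-\phi}$ on $\interior K_f$ gives $f\le M_0$ on $B(r_0/2)$. Hence $|\nabla f|\le M_0L_0$ there, and
\[
I_1 \;\le\; M_0L_0 \int_{B(r_0/2)}|x|^{q-n}\,dx \;<\; \infty,
\]
since $\omega_q(x)=|x|^{q-n}$ is locally integrable near the origin whenever $q>0$.

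For $I_2$, I plan to invoke the weighted coarea formula \eqref{eq 26} with $L=B$ and $\omega=\omega_q$. Since $\partial K_f\cap B(r_0)=\emptyset$, the singular part of $Df$ from \eqref{eq 32} does not interfere on $B(r_0/2)^c$ (it contributes only the finite mass $\widetilde{C}_q^s(f;\sn)$ noted in the remark after Theorem \ref{theorem 31}), yielding
\[
I_2 \;\le\; \int_0^{\|f\|_\infty}\!\!\int_{\partial\{f>t\}\cap B(r_0/2)^c}|x|^{q-n}\,d\mathcal{H}^{n-1}\,dt.
\]
For each $t>0$ the level set $\{f>t\}$ is a bounded convex body: the standard exponential decay $f(x)\le A e^{-c|x|}$ of a log-concave $\L^1$ function yields $\{f>t\}\subset B(R(t))$ with $R(t)\le\max(R_\ast,\tfrac{1}{c}\log(A/t))$, and convexity gives $\Per(\{f>t\})\le C_n R(t)^{n-1}$. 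If $q\ge n$ then $|x|^{q-n}\le R(t)^{q-n}$ on the level set, so the inner integral is controlled by $C_n R(t)^{q-1}$; if $q<n$ then $|x|^{q-n}\le(r_0/2)^{q-n}$ on $B(r_0/2)^c$, so the inner integral is controlled by $(r_0/2)^{q-n}C_n R(t)^{n-1}$. In both cases the $t$-integrand grows at most polylogarithmically in $1/t$ as $t\to 0^+$ and is uniformly bounded on $(t_\ast,\|f\|_\infty)$, so it is integrable.

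The main obstacle I anticipate is the bookkeeping in the weighted coarea step, isolating the absolutely continuous part of $Df$ (the piece we actually need to bound) from the singular part supported on $\partial K_f$; once that is done, the level-set estimates are classical convex-geometric bounds. For the last sentence of the theorem, any even $f\in\Lc(\rn)$ with nonzero $\L^1$ norm automatically has $K_f$ containing the origin in its interior, since a symmetric convex set of positive Lebesgue measure has nonempty interior and, together with any interior point $x$, must then contain $o=\tfrac12 x+\tfrac12(-x)$ in its interior by convexity.
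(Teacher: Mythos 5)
Your identity step and the near-origin estimate $I_1$ agree with the paper. For the tail $I_2$, though, you take a genuinely different route: you run the weighted coarea formula over the level sets $\{f>t\}$, bound each inner integral via the monotonicity $\mathcal{H}^{n-1}(\partial\{f>t\})\le C_n R(t)^{n-1}$ for convex $\{f>t\}\subset B(R(t))$, and use the logarithmic growth $R(t)\lesssim \log(1/t)$ coming from exponential decay of $f$ to get polylogarithmic (hence integrable) singularities at $t=0^+$. The paper instead slices: it reduces to bounding $\int_{B(r_0)^c}|\partial f/\partial x_n|\,|x|^{q-n}dx$, controls each one-dimensional slice $t\mapsto f(y,t)$ via Lemma \ref{lemma log concave derivative control} (the elementary bound $\int_{|t|\ge t_0}|f'|\,dt\le 4\sup_{|t|\ge t_0}f$ for a $1$D log-concave $f$), and then integrates in $y$ using $\phi(x)>c_0|x|$; it treats $q\in(0,n]$ and $q>n$ as separate computations, the second via an annular decomposition. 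Your level-set argument handles all $q>0$ more uniformly, at the price of invoking the BV-coarea machinery and convex-geometric perimeter bounds; the paper's slicing argument is more elementary (only a $1$D lemma and Fubini). Both are correct. The one point you flagged as needing care is indeed the place to be explicit: in the inequality
\begin{equation}
\int_{B(r_0/2)^c}|\nabla f|\,|x|^{q-n}\,dx\ \le\ \int_0^{\infty}\int_{\partial\{f>t\}\cap B(r_0/2)^c}|x|^{q-n}\,d\mathcal{H}^{n-1}\,dt,
\end{equation}
the left side is only the absolutely continuous part of $\|Df\|$ weighted by $\omega_q$, and the right side equals (by \eqref{eq 26} with $L=B$) the full weighted total variation over $B(r_0/2)^c$, which dominates the absolutely continuous part; this makes the inequality, rather than an equality, both correct and all you need. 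Your justification of the ``in particular'' clause --- that an even convex body of positive measure must contain the origin in its interior --- is also correct.
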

\begin{proof}
We first consider the case where $q\in (0,n]$.

Since $K_f$ contains the origin as an interior point, the function $f$ is Lipschitz in $B(r_0)$ for some $r_0>0$. Denote the Lipschitz constant of $f$ inside $B(r_0)$ by $\Lambda>0$. This implies that inside $B(r_0)$, we have $|\nabla f|\leq \Lambda$ almost everywhere. Therefore,
\begin{equation}
	\int_{B(r_0)} |\nabla f(x)| \cdot |x|^{q-n}dx\leq \Lambda\int_{B(r_0)}|x|^{q-n}dx<\infty.
\end{equation}

Therefore, to show \eqref{eq 1002}, we only need to show
\begin{equation}
		\int_{B(r_0)^c} |\nabla f(x)| \cdot |x|^{q-n}dx<\infty.
\end{equation}
In particular, since 
\begin{equation}
	\begin{aligned}
		|\nabla f(x)| = \sqrt{\left(\frac{\partial f}{\partial x_1}(x)\right)^2+\cdots + \left(\frac{\partial f}{\partial x_n}(x)\right)^2}\leq \sum_{i=1}^n \left|\frac{\partial f}{\partial x_i}(x)\right|,
	\end{aligned}
\end{equation}
without loss of generality, it is sufficient to show
	\begin{equation}
		\int_{B(r_0)^c} \left|\frac{\partial f}{\partial x_n}(x)\right|\cdot |x|^{q-n}dx<\infty.
	\end{equation}
	We write $x=(y,t)\in \mathbb{R}^{n-1}\times \mathbb{R}$. Note that since $f\in \L^1$, then $f\rightarrow 0$ as $|x|\rightarrow \infty$. Therefore, since $0<q\leq n$, by Lemma \ref{lemma log concave derivative control},
	\begin{equation}
	\label{eq 1000}
	\begin{aligned}
		&\int_{B(r_0)^c} \left|\frac{\partial f}{\partial t}(x)\right| \cdot |x|^{q-n}dx\\
		\leq& \left(\int_{B(r_0/2)^c\cap \mathbb{R}^{n-1}}\int_{-\infty}^{\infty}+\int_{B(r_0/2)\cap \mathbb{R}^{n-1}}\int_{|t|\geq \frac{\sqrt{3}}{2}r_0}\right) \left|\frac{\partial f}{\partial t}(y,t)\right||(y,t)|^{q-n}dtdy\\
		\leq &4\int_{B(r_0/2)^c\cap \mathbb{R}^{n-1}} \sup_{t\in \mathbb{R}} f(y,t)\cdot |y|^{q-n}dy+ \left(\frac{\sqrt{3}}{2}r_0\right)^{q-n} \int_{B(r_0/2)\cap \mathbb{R}^{n-1}}\int_{|t|\geq \frac{\sqrt{3}}{2}r_0}\left|\frac{\partial f}{\partial t}(y,t)\right|dtdy\\
		\leq & 4\int_{B(r_0/2)^c\cap \mathbb{R}^{n-1}} \sup_{t\in \mathbb{R}} f(y,t)\cdot |y|^{q-n}dy + 4\left(\frac{\sqrt{3}}{2}r_0\right)^{q-n}\int_{B(r_0/2)\cap \mathbb{R}^{n-1}}\sup_{t\in \mathbb{R}}f(y,t)dy\\
		\leq &4\int_{B(r_0/2)^c\cap \mathbb{R}^{n-1}} \sup_{t\in \mathbb{R}} f(y,t)\cdot |y|^{q-n}dy + C,
	\end{aligned}
	\end{equation}
	for some positive constant $C$ depending on $r_0$ and $\sup f$.
	Since $f\in \L^1$, if we write $f=e^{-\phi}$,  we have
	\begin{equation}
	\label{eq 2000}
		\liminf_{|x|\rightarrow \infty}\frac{\phi(x)}{|x|}>0.
	\end{equation}
	In particular, this implies the existence of $c_0>0$ and $M>0$ such that for all $|(y,t)|>M$, we have
	\begin{equation}
	\label{eq 1001}
		\phi(y,t)>c_0 |(y,t)|.
	\end{equation}
	The desired result follows from combining \eqref{eq 1000} and \eqref{eq 1001}.
	
	Let us now consider the case $q>n$. 	
	
	We first note that
	\begin{equation}
	\label{eq 1004}
		\int_{B(M^2)} |\nabla f(x)|\cdot |x|^{q-n}dx\leq M^{2(q-n)} \int_{B(M^2)}|\nabla f(x)|dx\leq M^{2(q-n)} \int_{\rn} |\nabla f(x)|dx<\infty,
	\end{equation}
	where the last inequality follows from the previously established case $q=n$. Therefore, we only need to show
	\begin{equation}
	\label{eq 2002}
		\int_{B(M^2)^c} |\nabla f(x)|\cdot |x|^{q-n}dx<\infty.
	\end{equation}
	Note that,
	\begin{equation}
	\label{eq 1005}
		\begin{aligned}
			&\int_{B(M^2)^c} \left|\frac{\partial f}{\partial x_n} (x)\right|\cdot |x|^{q-n}dx \\=&\sum_{k=M^2}^{\infty} \int_{B(k+1)\setminus B(k)}  \left|\frac{\partial f}{\partial x_n} (x)\right|\cdot |x|^{q-n}dx\\
			\leq & \sum_{k=M^2}^{\infty}(k+1)^{q-n} \int_{B(k+1)\setminus B(k)} \left|\frac{\partial f}{\partial x_n} (x)\right|dx\\
			\leq& \sum_{k=M^2}^{\infty}(k+1)^{q-n}\left(\int_{[B(k+1)\setminus B(k-1)]\cap \mathbb{R}^{n-1}}\int_{-\infty}^{\infty}+\int_{B(k-1)\cap \mathbb{R}^{n-1}}\int_{|t|\geq \sqrt{k}}\right) \left|\frac{\partial f}{\partial t}(y,t)\right|dtdy\\
					\end{aligned}
	\end{equation}
	
	For the first term, by \eqref{eq 1001}, we have
	\begin{equation}
	\label{eq 2003}
		\begin{aligned}
			&\sum_{k=M^2}^{\infty}(k+1)^{q-n}\int_{[B(k+1)\setminus B(k-1)]\cap \mathbb{R}^{n-1}}\int_{-\infty}^{\infty} \left|\frac{\partial f}{\partial t}(y,t)\right|dtdy\\
			\leq & 4\sum_{k=M^2}^{\infty}(k+1)^{q-n}\int_{[B(k+1)\setminus B(k-1)]\cap \mathbb{R}^{n-1}}\sup_{t\in \mathbb{R}}f(y,t)dy\\
			\leq & 4\sum_{k=M^2}^\infty (k+1)^{q-n}\int_{[B(k+1)\setminus B(k-1)]\cap \mathbb{R}^{n-1}} e^{-c_0 |y|}dy\\
			\leq & 4C\sum_{k=M^2}^\infty(k+1)^{q-n}e^{-c_0(k-1)}(k+1)^{n-1}<\infty.
		\end{aligned}
	\end{equation}
	
	For the second term, by using Lemma \ref{lemma log concave derivative control} again, we have
	\begin{equation}
		\int_{|t|\geq \sqrt{k}}\left|\frac{\partial f}{\partial t}(y,t)\right|dt\leq 4\sup_{|t|\geq \sqrt{k}} f(y,t),
	\end{equation}
	and consequently, by \eqref{eq 1001},
	\begin{equation}
	\label{eq 2004}
	\begin{aligned}
		&\sum_{k=M^2}^{\infty}(k+1)^{q-n}\int_{B(k-1)\cap \mathbb{R}^{n-1}}\int_{|t|\geq \sqrt{k}} \left|\frac{\partial f}{\partial t}(y,t)\right|dtdy\\
		\leq&  4\sum_{k=M^2}^{\infty}(k+1)^{q-n}\int_{B(k-1)\cap \mathbb{R}^{n-1}}\sup_{|t|\geq \sqrt{k}} f(y,t)dy\\
		\leq & 4\sum_{k=M^2}^{\infty}(k+1)^{q-n}\int_{B(k-1)\cap \mathbb{R}^{n-1}}e^{-c_0\sqrt{k}}dy\\
		=& 4C\sum_{k=M^2}^{\infty}(k+1)^{q-n} (k-1)^{n-1} e^{-c_0\sqrt{k}}<\infty.
	\end{aligned}	
	\end{equation}
	Equation \eqref{eq 2002} now follows from \eqref{eq 1005}, \eqref{eq 2003}, and \eqref{eq 2004}.
\end{proof}

\bibliographystyle{plain}

\end{document}